\newcommand{\p}{\mathbb{P}}
\newcommand{\e}{\mathbb{E}}
\newcommand{\RR}{\mathbb{R}}
\newcommand{\ind}[2]{\ensuremath{#1 \perp\!\!\!\perp #2}}
\newtheorem{lemma}{Lemma}
\newtheorem{remark}{Remark}
\newtheorem{mydef}{Definition}
\newtheorem{theorem}{Theorem}
\newtheorem{proposition}{Proposition}
\theoremstyle{definition}
\newtheorem{example}{Example}
\newcommand{\var}{\ensuremath{\operatorname{Var}}}
\newcommand{\cov}{\ensuremath{\operatorname{Cov}}}
\newcommand{\T}{^\intercal}
\newcommand{\supp}{\operatorname{supp}}
\newcommand{\tr}{\operatorname{tr}}
\newcommand{\sign}{\operatorname{sign}}
\newcommand{\bm}{\mathfrak{m}}
\newcommand{\bX}{\boldsymbol{X}}
\newcommand{\bv}{\mathbf{v}}
\newcommand{\bz}{\mathbf{z}}
\newcommand{\Mb}{\mathbf{M}}
\newcommand{\cS}{\mathcal{S}}
\newcommand{\Ab}{\mathbf{A}}
\newcommand{\Zb}{\mathbf{Z}}
\newcommand{\bW}{\boldsymbol{W}}
\newcommand{\Vb}{\mathbf{V}}
\newcommand{\Ub}{\mathbf{U}}
\newcommand{\calH}{\mathcal{H}}
\newcommand{\calI}{\mathcal{I}}
\newcommand{\Nb}{\mathbf{N}}
\newcommand{\bb}{\mathbf{b}}
\newcommand{\bP}{\mathbf{P}}
\definecolor{lbcolor}{rgb}{0.95,0.95,0.95}
\newcommand{\CTSTART}{}
\newcommand{\Var}{\mathrm{Var}}
\newcommand{\EE}{\mathbb{E}}
\newcommand{\bbE}{\mathbb{E}}
\newcommand{\vX}{\boldsymbol{X}}
\newcommand{\bbeta}{\boldsymbol{\beta}}
\newcommand{\bmu}{\boldsymbol{\mu}}
\newcommand{\bSigma}{\boldsymbol{\Sigma}}
\begin{document}

\title{\huge Signed Support Recovery for Single Index Models in High-Dimensions}

\author{Matey Neykov\thanks{Department of Operations Research and Financial Engineering, Princeton University, Princeton, NJ 08544} \and Qian Lin\thanks{Center of Mathematical Sciences and Applications, Harvard University, Cambridge, MA 02138} \and Jun S. Liu\thanks{Department of Statistics, Harvard University, Cambridge, MA 02138}}

%
%
%
%
%
%
%
%
%
%

\date{}

\maketitle

\begin{abstract}
In this paper we study the support recovery problem for single index models $Y=f(\bX\T\bbeta,\varepsilon)$, where $f$ is an unknown link function, $\bX\sim N_p(0,\mathbb{I}_{p})$ and $\bbeta$ is an $s$-sparse unit vector such that $\bbeta_{i}\in \{\pm\frac{1}{\sqrt{s}},0\}$. In particular, we look into the performance of two computationally inexpensive algorithms: (a) the diagonal thresholding sliced inverse regression (DT-SIR) introduced by \cite{lin2015consistency}; and (b) a semi-definite programming (SDP) approach inspired by \cite{amini2008high}. When $s=O(p^{1-\delta})$ for some $\delta>0$, we demonstrate that both procedures can succeed in recovering the support of $\bbeta$ as long as the \textit{rescaled sample size} $\Gamma=\frac{n}{s\log(p-s)}$ is larger than a certain critical threshold. On the other hand, when $\Gamma$ is smaller than a critical value, any algorithm fails to recover the support with probability at least $\frac{1}{2}$ asymptotically. In other words, we demonstrate that both DT-SIR and the SDP approach are optimal (up to a scalar) for recovering the support of $\bbeta$ in terms of sample size. We provide extensive simulations, as well as a real dataset application to help verify our theoretical observations. 
\end{abstract}

\noindent {\bf Keywords:} Single index models, Sliced inverse regression, Sparsity, Support recovery, High-dimensional statistics, Semidefinite programming


\section{Introduction}

Due to the recent advances in technology, collecting data becomes a routine. The `{\it small $n$, large $p$}' characteristic of modern data brings new challenges in interpreting and processing it. Dimension reduction and variable selection procedures become an indispensable step in data exploration. Regrettably, the majority of the classical algorithms were developed to work in the regime $p \ll n$, and hence one needs to exercise caution when applying  existing methods in a high dimensional setting. A firm understanding of the limitations of classical dimension reduction procedures in the modern $p \gg n$ regime, will help facilitate their appropriate application. 

For example, the archetypical unsupervised dimension reduction procedure --- principal component analysis (PCA), has been widely and successfully applied in a range of scientific problems \cite[e.g.]{price2010new, brenner2000adaptive}. The behavior of PCA in high dimensions has been well studied in the recent years. In \cite{johnstone2004sparse, johnstone2009consistency,paul2007asymptotics}, it was shown that in the spiked covariance model, PCA succeeds if and only if $\lim\frac{p}{n}\neq 0$. This result stimulated the statistical community to discuss the minimax rate of estimating the principal space under sparsity constraints (see e.g., \cite{cai2013sparse, vu2012minimax} and references therein) and the tradeoff between the statistical and computational efficiency (see e.g., \cite{berthet2013complexity}).

Another line of dimension reduction research, studies the so-called sufficient dimension reduction (SDR). The goal of SDR is to find the minimal subspace $\mathcal{S}$ such that $\ind Y \bX \big| P_{\mathcal{S}}\bX$, where $\bX \in \mathbb{R}^{p}$ is the predictor vector and $Y\in \mathbb{R}$ is the response \cite[e.g.]{cook2004testing, li1991sliced, cook2005sufficient}. Unlike its unsupervised counterpart, much less attention has been paid to how SDR algorithms behave in a high dimensional setting. The optimal estimation rates of SDR algorithms in terms of sparsity ($s$), dimensionality ($p$), and sample size ($n$)  are unclear.

Sliced inverse regression, proposed by \cite{li1991sliced}, is one of the most popular SDR methods for estimating the space $\mathcal{S}$. When the dimensionality $p$ is larger than or comparable to the sample size $n$, sparsity assumptions are often imposed on the loading vector $\bbeta$ \cite[e.g.]{li2006sparse}. \cite{lin2015consistency} proved that in fact $\bbE[\angle(\widehat{\bbeta},\bbeta)] >0$ if $\rho=\lim\frac{p}{n}\neq 0$ and $\sin\big(\angle(\widehat{\bbeta},\bbeta)\big)=0$ when $\rho=0$ where $\widehat{\bbeta}$ is the SIR estimator of $\bbeta$. In other words, the SIR estimator $\widehat{\bbeta}$ is consistent (up to a sign) if and only if $\rho=\lim\frac{p}{n}=0$. One implication of this result is that in order to estimate $\bbeta$, structural assumptions such as sparsity are necessary in the high dimensional setting. 

In the present paper, inspired by \cite{amini2008high}, we investigate the support recovery problem for single index models (SIM) (\ref{eqn:model:sir_single_index}), under the sparsity assumption $\|\bbeta\|_0 = s$ in the regime $s=O(p^{1-\delta})$ for some $\delta>0$. More formally we study the models:
 \begin{equation}\label{eqn:model:sir_single_index}
Y=f(\bX\T\bbeta,\varepsilon)  \mbox{ with } \bX \sim N_p(0,\mathbb{I}_{p}), 
 \end{equation}
where the noise $\varepsilon$ is independent of $\bX$ and $f, \varepsilon, \bbeta$ belong to the class:  
\begin{equation}\label{model:class}
\begin{aligned}
\mathcal{F}_A = \Big\{(f, \varepsilon, \bbeta) &: \var(\e[Z | f(Z,\varepsilon)]) \geq A \mbox{ where } Z \sim N(0,1), \\ 
&\bbeta_{i} \in \Big\{\pm\frac{1}{\sqrt{s}}, 0\Big\}  \mbox{ and } (f,\varepsilon) \mbox{ is \textit{sliced stable} (see Definition \ref{def:sliced_stable})} \Big\}.
\end{aligned}
\end{equation}
Notice that no generality is lost in assuming that the vector $\bbeta$ is a unit vector since otherwise model (\ref{eqn:model:sir_single_index}) is not identifiable. Model class (\ref{model:class}), further assumes the idealized setting where all non-zero coordinates of $\bbeta$ have a signal strength of exactly the same magnitude, which simplifies our presentation while preserving the inherent complexity of the support recovery problem. We believe that studying support recovery in SIM, would bring us insightful understanding of SIR and other SDR algorithms.

In this paper, we study two procedures for signed support recover of SIM (\ref{eqn:model:sir_single_index}): the DT-SIR introduced by \cite{lin2015consistency} and the SDP approach inspired by \cite{amini2008high}. We let $\Gamma=\frac{n}{s\log(p-s)}$ be the \textit{rescaled sample size}. Our main contribution is to establish the existence of constants $\omega > 0$ and $\Omega > 0$ such that when $\Gamma > \Omega$ both DT-SIR and SDP approaches recover the signed support of $\bbeta$ correctly, with probability converging to $1$ asymptotically. Conversely, we show that when $\Gamma < \omega$ any algorithm fails to recover the support of $\bbeta$ with probability at least $1/2$.  In other words, we show that the optimal sample size of the support recovery problem of model \eqref{eqn:model:sir_single_index} is of the order $s\log(p-s)$. To the best of our knowledge, this optimality result, regarding the sample size of SIM \eqref{eqn:model:sir_single_index}, has not been previously discussed in the literature. Our second contribution is, to establish a sliced stability conjecture formulated by \cite{lin2015consistency}, under the SIM case. We demonstrate that classical conditions of \cite{hsing1992asymptotic} imply sliced stability in Section \ref{sec:sliced:stable}. This technical result might be of independent interest, especially when one wants to discuss the optimal rate problem for other SDR algorithms such as SAVE. We further develop a novel tool along the way of our analysis, which may represent further interest --- a concentration inequality stated under Lemma \ref{bernst}.
\subsection{Related Work}

In the fixed $p$ setting, the first asymptotic results on SIR appeared in the seminal papers \citep{duan1991slicing, hsing1992asymptotic}. Later on \cite{zhu2006sliced} allowed $p$ to diverge slowly with $n$ and established asymptotics in the regime $p = o(n^{1/2})$. In the super high-dimensional setting where $p \gg n$, several algorithms, hinging on regularization such as LASSO \citep{tibshirani1997lasso} and Dantzig Selector \citep{candes2007dantzig} were proposed by \cite{li2006sparse, yu2013dimension}, but these algorithms are not concerned with support recovery. Moreover, the algorithm suggested by \cite{li2006sparse} did not come with theoretical guarantees, and in \cite{yu2013dimension} it is not allowed for $s$ to scale with $p$ and $n$. A generic variable selection procedure was suggested in \cite{zhong2012correlation}, with guarantees of support recovery, in a more general setting than our present paper, but with a much more restrictive relationship ($p = o(n^{1/2})$) than the one we consider.

In the parallel line of research on sparse PCA there have been more developments. In \cite{johnstone2004sparse} the algorithm Diagonal Thresholding (DT) was suggested, to deal with the spiked-covariance model. It was later analyzed by \cite{amini2008high}, who showed that support recovery is achieved by DT in the sparse spiked covariance model,  provided that $n \gtrsim s^2 \log(p)$. \cite{amini2008high} further showed an information theoretic obstruction, in that no algorithm can recover the support of the principal eigenvector if $n \lesssim s \log(p)$. A computationally inefficient algorithm that succeeds in support recovery with high probability as long as $n \gtrsim s \log(p)$ is exhaustively scanning through all ${p \choose s}$ subsets of the coordinates of the principal eigenvector. In order to find feasible procedures, \cite{amini2008high} studied a semidefinite programming (SDP) estimator originally suggested in \cite{d2008optimal} --- and showed that if $n \gtrsim s \log(p)$ and the SDP has a rank $1$ solution, this solution can recover the signed support with high probability. Surprisingly however, \cite{krauthgamer2013semidefinite} showed that the rank $1$ condition, does not hold if $s^2 \log(p) \gtrsim n \gtrsim s \log(p)$. In contrast to the PCA case, our current paper argues that if one is concerned with the support recovery for SIM in the class $\mathcal{F}_A$, no computational and statistical tradeoff exists in the regime $s = O(p^{1-\delta})$, since the computationally tractable algorithms DT-SIR and SDP algorithms solve the support recovery problem of SIM with optimal sample size.

\subsection{Preliminaries and Notation}
We first briefly recall the SIR procedure for SIM. Suppose we observe $n=Hm$ independent and identically distributed (i.i.d.) samples $(Y_{i},\bX_{i})$ from model \eqref{eqn:model:sir_single_index}. SIR proceeds to sort and divide the data into $H$ slices of equal size, according to the order statistics $Y_{(i)}$ of $Y_{i}$. Let the concomitant of $Y_{(i)}$ be $X_{(i)}$. We will also use the double subscript notation $X_{h,i}$ for $X_{((h-1)m+i)}$. Let $S_h = (Y_{((h-1)m)},Y_{(hm)}], 1 \leq h < H$, and $S_H = (Y_{((H-1)m)}, +\infty)$ denote the random intervals partitioned by the points $Y_{(jm)}$(with $Y_{(0)} = -\infty$), $j \leq H-1$. Furthermore let $\bm_j(Y) = \e[\bX^j | Y]$ denote the $j$\textsuperscript{th} coordinate of the centered inverse regression curve $\e[\bX | Y]$, and $\bmu^j_h$ denotes $\e[\bX^j | Y \in S_h]$. Note that conditionally on the values $Y_{((h-1)m)}$ and $Y_{(hm)}$ the quantities $S_h$ and $\bmu_h^j$ become deterministic. 
For any integer $k \in \mathbb{N}$ put $[k] = \{1,\ldots, k\}$ for brevity.  Let $\overline \bX^j_{h,S} = \frac{1}{|S|} \sum_{ i \in S} \bX^j_{h,i}$, where $S \subset [m]$, $j \in [p]$. If $S = [m]$ we omit it from the notation, i.e., $\overline \bX^j_{h} = \frac{1}{m} \sum_{i = 1}^m \bX^j_{h,i}$. Put $\overline \bX^j = \frac{1}{H} \sum_{h = 1}^H \overline \bX^j_{h}$ for the global average. In terms of this notation, the SIR estimator $\Vb$ of the conditional covariance matrix is given by
\begin{align}
\Vb^{jk} = \frac{1}{H}\sum_{h = 1}^H \overline \bX^j_{h}\overline \bX^k_{h} \label{SIRVARestimator},
\end{align}
entrywise. The SIR estimator $\widehat{\bbeta}$ of $\bbeta$ is defined as the principal eigenvector of $\Vb$. We will denote the support of the vector $\bbeta$ by $\cS_{\bbeta}$, i.e. we set $\cS_{\bbeta} := \supp(\bbeta) = \{j : \bbeta_j \neq 0\}$.

Finally, we need some standard notations. For a vector $\bv$, let $\|\bv\|_p$ denote the usual $\ell_p$ norm for $1\leq p \leq \infty$ and $\|\bv\|_0=|\supp(\bv)|$.  
For a real random variable $X$, let
$
\|X\|_{\psi_2} = \sup_{p \geq 1} p^{-1/2} (\e|X|^p)^{1/p}, \|X\|_{\psi_1} = \sup_{p \geq 1} p^{-1} (\e|X|^p)^{1/p}.
$
Recall that a random variable is called \textit{sub-Gaussian} if $\|X\|_{\psi_2} < \infty$ and \textit{sub-exponential} if $\|X\|_{\psi_1} < \infty$. 
For a matrix $\Mb \in \mathbb{R}^{d \times d}$ and two sets $S_1,S_2 \subset [d]$, by double indexing $\Mb_{S_1, S_2}$ we denote the sub-matrix of $\Mb$ with entries $\Mb_{ij}$ for $i \in S_1, j\in S_2$. Furthermore, for a $d \times d$ matrix $\Mb_{d \times d}$, let $\|\Mb\|_{\max} = \max_{jk} |\Mb_{jk}|$ and $\|\Mb\|_{p,q} = \sup_{\|\bv\|_p = 1} \|\Mb \bv\|_q$. In particular, we have
$
\|\Mb\|_{2,2} = \max_{i \in [d]}\{\sigma_i(\Mb)\},
$ 
where $\sigma_i(\Mb)$ is the $i$\textsuperscript{th} singular value of $\Mb$, and
$
\|\Mb\|_{\infty,\infty} = \max_{i \in [d]} \sum_{j =1}^d |\Mb_{ij}|.
$
Let $F_n(x) = \frac{1}{n}\sum_{i = 1}^n \mathbb{I}(Y_i \leq x)$ denote the empirical distribution of the $Y$ sample, and $\Phi$ ({\it resp.} $\phi$) denote the cdf ({\it resp.} pdf) of a standard normal random variable. We will also occasionally use the abbreviation WLOG to stand for ``without loss of generality''.

\subsection{Organization}
The paper is organized as follows. The main results, and confirmatory numerical studies and a real data analysis are presented in Section \ref{secmainres} and Section \ref{secnumericalres} respectively. Proofs of our main results are included in Sections \ref{secpfthm1} and \ref{secpfthm2} while technical lemmas are deferred to Appendix \ref{technical:proofs}. A brief discussion on the potential directions is included in Section \ref{discsec}. In Appendix \ref{app:examples} we provide several concrete examples of SIM.

\section{Main Results} \label{secmainres}

In this section, we first prove a conjecture regarding the coordinate-wise sliced stability conditions introduced by \cite{lin2015consistency}. Second, we establish the optimal rate of support recovery in terms of the sample size.
Throughout the remainder of the paper, we assume that $Y$ is a continuously distributed random variable.

\subsection{Sliced Stability}\label{sec:sliced:stable}

Our proofs of signed support recovery rely on the following property of the inverse regression curve of a SIM.
\begin{mydef}[Sliced Stability] \label{def:sliced_stable}
We call the pair $(f,\varepsilon)$ sliced stable iff there exist constants $0 < l < 1, 1 < K, 0 < M$, such that for any $H \in \mathbb{N}, H > M$, and all partitions of $\mathbb{R} = \{ a_1 = -\infty, \ldots, a_{H+1} = +\infty\}$ with $\frac{l}{H} \leq \p(a_h < Y \leq a_{h + 1}) \leq  \frac{K}{H}$  there exist two constants $0 \leq \kappa(l, K,M) < 1$, $C(l,K,M) > 0$ such that for all $j \in \cS_{\bbeta}$\footnote{Observe that (\ref{slicedstab}) automatically holds for $j \not \in \cS_{\bbeta}$ in our case, since both the LHS and the RHS of (\ref{slicedstab}) are $0$ in this case.}:
\begin{align}
\sum_{h = 1}^H \var[\bm_j(Y) | a_h < Y \leq a_{h+1}] \leq C(l,K,M) H^{\kappa(l,K, M)} \var[\bm_j(Y)]. \label{slicedstab}
\end{align} 
\end{mydef}

The sliced stability assumption, is an implicit assumption on the function $f$ and the error distribution $\varepsilon$. If $\kappa = 0$, the condition means that the cumulative relative variability of the inverse regression curve is bounded for all slicing schemes with sufficiently small slices. If $\kappa > 0$ the cumulative relative variability of the inverse regression curve is allowed to scale sub-linearly with the number of slices (see also Figure \ref{sliced:stab:figure}). Intuitively, sliced stability allows us to ensure that the estimates $\Vb^{jj}$ of $\var[\bm_{j}(Y)]$ become increasingly accurate the more slices we introduce (see Lemma \ref{simplelemma} for a more rigorous treatment). Relying on the subsequent developments of this section, in Example \ref{exmp3} and Remark \ref{class:of:sliced:stable:models} of Appendix \ref{app:examples} we demonstrate that models of the form $Y = G(h(\bX\T \bbeta) + \varepsilon)$, where $G, h$ are continuous and monotone and $\varepsilon$ is a log-concave random variable, satisfy the sliced stability assumption.

\begin{figure}[H]\caption{Below, with a solid line we plot the standardized inverse regression curve $\bm(y) := \frac{\sign(\beta_j) \bm_j(y)}{\sqrt{\var(\bm_j(Y))}}, j \in \cS_{\bbeta}$, for the model $Y = 2 \operatorname{atan}(\bX\T \bbeta) + N(0,1)$ (see also (\ref{X3model})). The different colored parts of the dashed curve represent the conditional densities of $Y | Y \in S_h$, where $S_h$ for $h \in [H]$ (with H = $7$) are the slices illustrated by punctured vertical black lines. Finally the seven points' vertical-axis values represent the variances $\var[\bm(Y) | Y \in S_h]$. Sliced stability ensures that the average value of the variances $\var[\bm(Y) | Y \in S_h]$ has a decay rate of the order $H^{\kappa -1}$.}
\label{sliced:stab:figure}
  \centering
  \includegraphics[width=.6\linewidth]{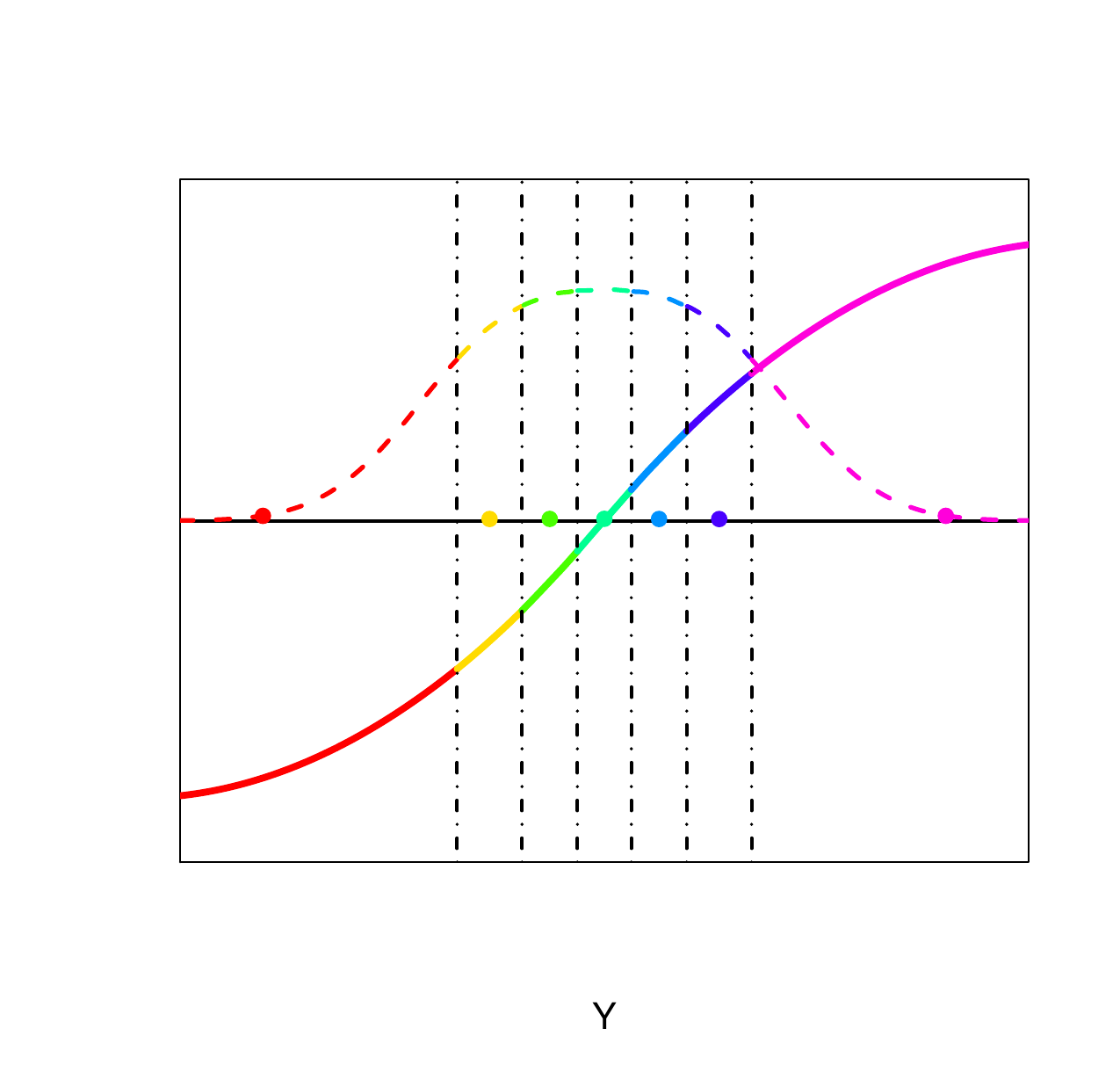}
\end{figure}

%

Definition \ref{def:sliced_stable} is the sliced stability definition from \cite{lin2015consistency} restated in terms of the SIM. \cite{lin2015consistency} conjectured that the sliced stability condition could be implied from the well accepted conditions proposed by \cite{hsing1992asymptotic}, which we state below with a slight modification. The first contribution of this paper is proving this conjecture in the case of SIM. 

Let $\mathcal{A}_H(l, K)$, with $1 < K , 0 < l < 1$, denote all partitions of $\mathbb{R}$ of the sort $\{ -\infty = a_1 \leq a_2 \leq \ldots \leq a_{H+1} = +\infty$\}, such that $\frac{l}{H} \leq \p(a_h \leq Y \leq a_{h + 1}) \leq \frac{K}{H}$. Moreover, for any fixed $B \in \mathbb{R}$, let $\Pi_r(B)$ denote all possible partitions of the closed interval $[-B,B]$ into $r$ points $-B \leq b_1 \leq b_2 \leq \ldots \leq b_r \leq B$.  Define the normalized version of the centered inverse regression curve $\bm(y) := \frac{\sign(\beta_j) \bm_j(y)}{\sqrt{\var(\bm_j(Y))}}, j \in [p]$\footnote{By symmetry $\sign(\beta_j) \bm_j(y) = \sign(\beta_i) \bm_i(y)$ for $i,j \in \cS_{\bbeta}$}, and let $\bm$ satisfy the following smoothness condition:
\begin{align} \label{msmoothness}
\lim_{r \rightarrow \infty} \sup_{b \in \Pi_r(B)} r^{-1/(2 + \xi)}  \sum_{i = 2}^{r} |\bm(b_i) - \bm(b_{i - 1})| = 0,
\end{align}
for any $B > 0$ for some fixed $\xi > 0$. Note that as mentioned in \cite{hsing1992asymptotic}, assumption (\ref{msmoothness}) is weaker than assuming that $\bm$ is of bounded variation, and furthermore the bigger the $\xi$ the more stringent this assumption becomes. In addition, assume that there exists  $B_0 > 0$ and a non-decreasing function $\widetilde \bm:(B_0, \infty) \mapsto \mathbb{R}$, such that:
\begin{align} \label{mglobal}
|\bm(x) - \bm(y)| \leq |\widetilde \bm (|x|) - \widetilde \bm(|y|)|, \mbox{ for } x, y \in (-\infty, -B_0) \mbox { or }  (B_0, +\infty),
\end{align}
and moreover, $\e[|\widetilde \bm(|Y|)|^{(2 + \xi)}] < \infty$ (where in the expectation we set $\widetilde \bm(y) = 0$ for $|y| \leq B_0$).  We are now in a position to formulate the following:

\begin{proposition} \label{slicedstabsuffcond} Assume that the standardized centered inverse regression curve satisfies properties (\ref{msmoothness}) and (\ref{mglobal}) for some $\xi > 0$. Then we have that for any fixed $0 < l < 1 < K$:
\begin{align} \label{varsumgozero}
\lim_{H \rightarrow \infty} \sup_{a \in \mathcal{A}_H(l,K)} \frac{1}{H^{2/(2 + \xi)}}\sum_{h = 1}^{H} \var[\bm(Y) | a_h < Y \leq a_{h + 1}] \rightarrow 0.
\end{align}
\end{proposition}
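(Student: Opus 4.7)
The proof follows the classical Hsing--Carroll decomposition, splitting $\mathbb{R}$ into a central window $[-B_H, B_H]$ (with $B_H > B_0$ to be tuned below) and its complement, and handling each piece separately under (\ref{msmoothness}) and (\ref{mglobal}). Given a partition $a \in \mathcal{A}_H(l, K)$, I would classify each slice $S_h = (a_h, a_{h+1}]$ as \emph{bulk} ($S_h \subset [-B_H, B_H]$), \emph{tail} ($S_h \subset (-\infty, -B_H) \cup (B_H, \infty)$), or \emph{boundary} (straddling $\pm B_H$). There are at most two boundary slices; their individual variances are $O(1)$, which is $o(H^{2/(2+\xi)})$.

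\textbf{Bulk estimate.} On a bulk slice, write $M_h = \sup_{S_h} \bm$ and $m_h = \inf_{S_h}\bm$, so $\var[\bm(Y)|S_h] \leq (M_h - m_h)^2$. Append a near-maximizing and a near-minimizing point of $\bm$ from each bulk slice to the list of slice endpoints, producing a partition of $[-B_H, B_H]$ with $r \leq 3H+2$ points whose total variation dominates $\sum_{h\in\text{bulk}}(M_h - m_h)$ up to arbitrarily small slack. Since refining a partition only increases $\sum_i |\bm(b_i) - \bm(b_{i-1})|$, assumption (\ref{msmoothness}) gives
\[
\sum_{h\in\text{bulk}}(M_h - m_h) \;\leq\; (3H+2)^{1/(2+\xi)}\,\varepsilon(H, B_H),
\]
where $\varepsilon(r, B) \to 0$ as $r \to \infty$ for each fixed $B$. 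Combined with the crude inequality $\sum x_h^2 \leq (\sum x_h)^2$, this yields $\sum_{h\in\text{bulk}}(M_h - m_h)^2 \leq (3H+2)^{2/(2+\xi)}\,\varepsilon(H, B_H)^2$.

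\textbf{Tail estimate.} For $S_h \subset (B_H, \infty)$ (the left tail is symmetric), the iid-copy representation $\var[\bm(Y)|S_h] = \tfrac{1}{2}\,\e[(\bm(Y_1) - \bm(Y_2))^2 | Y_1, Y_2 \in S_h]$ together with (\ref{mglobal}) shows $\var[\bm(Y)|S_h] \leq \var[\widetilde\bm(|Y|)|S_h]$. Summing via the conditional variance decomposition,
\[
\sum_{h\in\text{tail}} p_h \,\var[\widetilde\bm(|Y|)|S_h] \;\leq\; \e\bigl[\widetilde\bm(|Y|)^2 \mathbf{1}(|Y|>B_H)\bigr],
\]
and using $p_h \geq l/H$ together with Hölder's inequality gives
\[
\sum_{h\in\text{tail}}\var[\bm(Y)|S_h] \;\leq\; \frac{H}{l}\,\eta(B_H)\,\p(|Y|>B_H)^{\xi/(2+\xi)},\qquad \eta(B) := \bigl(\e[\widetilde\bm(|Y|)^{2+\xi}\mathbf{1}(|Y|>B)]\bigr)^{2/(2+\xi)},
\]
with $\eta(B)\to 0$ as $B\to\infty$ by dominated convergence, since $\widetilde\bm(|Y|)^{2+\xi}$ is integrable.

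\textbf{Choice of $B_H$ and main obstacle.} Dividing both bounds by $H^{2/(2+\xi)}$, it remains to select $B_H \to \infty$ so that $\varepsilon(H, B_H)\to 0$ (bulk) and $(H\p(|Y|>B_H))^{\xi/(2+\xi)}\eta(B_H)\to 0$ (tail). The tail condition is ensured by taking $B_H$ large enough that $H\p(|Y|>B_H) = O(1)$ (for example, $B_H = F_{|Y|}^{-1}(1 - 1/H)$), since then $\eta(B_H) \to 0$ finishes the job. A diagonal construction exploiting that (\ref{msmoothness}) gives $\varepsilon(r, B)\to 0$ pointwise in $B$ then produces a $B_H \to \infty$ growing slowly enough to also satisfy $\varepsilon(H, B_H)\to 0$. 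All bounds are uniform in $a \in \mathcal{A}_H(l, K)$, and combining them yields (\ref{varsumgozero}). The hard part is verifying the compatibility of these two rate requirements on $B_H$: the qualitative control of (\ref{msmoothness}) in $B$ must be married to the moment-driven tail rate, and it is precisely the strict $(2+\xi)$-th moment (as opposed to the $2$nd) that furnishes the slack making the diagonal choice feasible. Everything else reduces to routine applications of Jensen's and Hölder's inequalities.
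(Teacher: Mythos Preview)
Your decomposition into bulk and tail is the same as the paper's in spirit, but there is a genuine gap in the way you reconcile the two pieces. The bulk estimate invokes (\ref{msmoothness}) on the growing window $[-B_H,B_H]$, yet (\ref{msmoothness}) only guarantees $\varepsilon(r,B)\to 0$ for each \emph{fixed} $B$; it gives no uniformity whatsoever as $B\to\infty$. Your diagonal construction can indeed manufacture a sequence $B_H\to\infty$ with $\varepsilon(H,B_H)\to 0$, but that sequence is then dictated entirely by the (arbitrarily slow) rate at which (\ref{msmoothness}) kicks in, and there is no reason the resulting $B_H$ satisfies the tail requirement $(H\,\p(|Y|>B_H))^{\xi/(2+\xi)}\eta(B_H)\to 0$. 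Conversely, choosing $B_H=F_{|Y|}^{-1}(1-1/H)$ handles the tail but tells you nothing about $\varepsilon(H,B_H)$. The sentence ``it is precisely the strict $(2+\xi)$-th moment that furnishes the slack making the diagonal choice feasible'' is an assertion, not an argument, and in fact the two constraints need not be compatible if one uses only (\ref{msmoothness}) for the bulk. (A minor related issue: the claim that boundary-slice variances are $O(1)$ is also unjustified, since $\bm$ need not be bounded.)

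The paper sidesteps this difficulty by taking the window to be a \emph{fixed} interval $[-\widetilde B_0,\widetilde B_0]$ with $\widetilde B_0>B_0$, so that (\ref{msmoothness}) applies directly. The price is that almost all slices now lie in the ``tail'' region $|y|>\widetilde B_0$, and here the paper exploits (\ref{mglobal}) much more heavily than you do: because $\widetilde\bm$ is monotone, the oscillation bounds $\var[\bm(Y)\mid S_h]\le(\widetilde\bm(|a_h|)-\widetilde\bm(|a_{h+1}|))^2$ \emph{telescope} over the intermediate tail slices to a single term $(\widetilde\bm(|a_2|)-\widetilde\bm(\widetilde B_0))^2$, which is then shown to be $o(H^{2/(2+\xi)})$ via the moment condition (since $\widetilde\bm(|a_2|)^{2+\xi}\p(Y\le a_2)\to 0$). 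Only the two extreme slices $(-\infty,a_2]$ and $(a_H,\infty)$ require the H\"older-type moment bound you wrote down. If you try to repair your argument, you will find that controlling $\varepsilon(r,B_H)$ for growing $B_H$ forces you to invoke (\ref{mglobal}) on $[B_0,B_H]$ and telescope --- at which point you have essentially reproduced the paper's proof.
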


We defer the proof of Proposition \ref{slicedstabsuffcond} to Appendix \ref{technical:proofs}. It is clear however that (\ref{varsumgozero}) implies the existence of constants $M$, $C(l,K,M)$ such that (\ref{slicedstab}) holds, with $\kappa = \frac{2}{2 + \xi} < 1$.

\subsection{Optimal sample size for support recovery}
Recall that $\Gamma=\frac{n}{s\log(p-s)}$ is the rescaled sample size. First we establish an information theoretic barrier of the support recovery problem, i.e. we show that there exists a positive constant $\omega$ such that, when $\Gamma<\omega$ every algorithm fails with probability at least $1/2$. Then we prove that two algorithms --- DT-SIR and an SDP based procedure achieve this bound, i.e. we demonstrate that there exists a positive constant $\Omega$, such that, when $\Gamma>\Omega$, these two algorithms successfully recover the signed support with probability $1$ as $n \rightarrow \infty$.

\subsubsection{Information theoretic barrier}
Intuitively, when the sample size is small, no algorithm is expected to be able to recover the support successfully. In fact, let us consider the simple linear regression model:
\begin{align}\label{mod:linear}
Y=\vX\T \bbeta+\varepsilon  \mbox{ where } \vX \sim N_p(0,\mathbb{I}_{p}), \varepsilon \sim N(0,\sigma^{2}),
\end{align}
where $\bbeta$ is a unit vector such that $\bbeta_{i} \in \{\pm\frac{1}{\sqrt{s}},0\}$. This model belongs to the class $\mathcal{F}_{(\sigma^2 + 1)^{-1}}$ (to verify sliced stability refer to Example \ref{exmp3} and Remark \ref{class:of:sliced:stable:models} of Appendix \ref{app:examples}). The following result, which is obtained by a slight modification of the arguments in \citep{wainwright2009information}, shows that support recovery is infeasible if $\Gamma$ is small, by effectively arguing that no algorithm works even for model (\ref{mod:linear}).

\begin{proposition} \label{lowerboundprop} Suppose there are $n$ observations from the model \eqref{mod:linear}.  Then there exists a positive constant $\omega$, such that if
$$
\Gamma=\frac{n}{s\log(p-s)} < \omega,
$$
any algorithm for support support recovery will have errors with probability at least $\frac{1}{2}$ asymptotically.
\end{proposition}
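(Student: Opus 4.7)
The plan is to follow the Fano-inequality strategy of \cite{wainwright2009information}, adapted to the current normalization where every nonzero coordinate of $\bbeta$ has magnitude $1/\sqrt{s}$. Since signed support recovery is at least as hard as unsigned support recovery, it suffices to exhibit a restricted sub-family of $\bbeta$'s with a common sign pattern that is information-theoretically indistinguishable when $\Gamma$ is too small.

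First I would set up the restricted ensemble. Fix once and for all the coordinates $\{1,\ldots,s-1\}$ with value $+1/\sqrt{s}$, and for each $j\in\{s,s+1,\ldots,p\}$ let $\bbeta^{(j)}$ be the unit vector obtained by placing an additional $+1/\sqrt{s}$ at coordinate $j$. This gives $M := p-s+1$ candidate parameter vectors, all lying in the model class $\mathcal{F}_{(\sigma^2+1)^{-1}}$. Put a uniform prior $J\sim\mathrm{Unif}\{s,\ldots,p\}$ on which hypothesis generates the data. Because identifying the true $\supp(\bbeta)$ is equivalent to identifying $J$ in this sub-family, any procedure that achieves support recovery error below $1/2$ over the full model class must also do so on this sub-ensemble.

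Next I would compute the key KL divergence. Under model \eqref{mod:linear} with $n$ i.i.d.\ samples, a routine Gaussian calculation gives
\begin{equation*}
D_{\mathrm{KL}}\bigl(P^{n}_{\bbeta^{(j)}}\,\big\|\,P^{n}_{\bbeta^{(k)}}\bigr)
= \frac{n}{2\sigma^{2}}\,\bigl\|\bbeta^{(j)}-\bbeta^{(k)}\bigr\|_{2}^{2}
= \frac{n}{s\,\sigma^{2}}, \qquad j\neq k,
\end{equation*}
because $\bbeta^{(j)}-\bbeta^{(k)}$ has exactly two nonzero entries of magnitude $1/\sqrt{s}$. Hence by the usual convexity bound, the mutual information $I(J;\text{data})$ is at most $\max_{j,k}D_{\mathrm{KL}}=n/(s\sigma^{2})$.

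Finally I would invoke Fano's inequality. Letting $\widehat{J}$ be any measurable function of the data,
\begin{equation*}
\PP(\widehat{J}\neq J) \;\geq\; 1 - \frac{I(J;\text{data})+\log 2}{\log M}
\;\geq\; 1 - \frac{n/(s\sigma^{2})+\log 2}{\log(p-s+1)}.
\end{equation*}
Choosing $\omega := \sigma^{2}/3$ (any strictly smaller constant than $\sigma^{2}/2$ works), whenever $\Gamma=n/(s\log(p-s))<\omega$ the right-hand side is at least $1/2$ for all $p$ large enough, which is the claim. The argument is essentially bookkeeping once the hard sub-family is chosen; the only real subtlety is ensuring that the restricted ensemble still consists of valid members of $\mathcal{F}_{(\sigma^{2}+1)^{-1}}$ (which follows from the remark in the paper that \eqref{mod:linear} lies in this class) and that the KL divergence is correctly scaled by the $1/\sqrt{s}$ normalization, so that the Hamming-distance-one perturbations in the support translate to an $\ell_{2}^{2}$ gap of $2/s$ rather than a constant.
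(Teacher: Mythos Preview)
Your proof is correct and follows essentially the same approach as the paper: both fix $s-1$ support coordinates, place a uniform prior over the $p-s+1$ choices for the remaining one, and apply Fano's inequality. The paper routes the argument through a more general lemma for single index models satisfying a KL-type condition (\ref{KLdivcond}) and then specializes to the linear case, whereas you compute the Gaussian KL divergence $n/(s\sigma^2)$ directly; this is a harmless streamlining rather than a genuinely different strategy.
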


\subsubsection{Optimality of DT-SIR}

Recall that we are working with the class of models $\mathcal{F}_{A}$ \eqref{model:class}. One key observation is the following signal strength result.

\begin{lemma} \label{signallemma} 
For any j $\in \cS_{\bbeta}=supp(\bbeta)$, one has:
\begin{align}
\frac{A}{s} \leq \var(\bm_{j}(Y)) \leq \frac{1}{s},
\end{align}
and $\var(\bm_{j}(Y))=0$ for $j \not \in \cS_{\bbeta}$. 
\end{lemma}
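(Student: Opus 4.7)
The plan is to exploit the fact that under $\bX \sim N_p(0,\mathbb{I}_p)$, the random vector $\bX$ can be cleanly decomposed along $\bbeta$ and its orthogonal complement, which collapses the inverse regression curve onto a one-dimensional object.

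First I would write $Z := \bX\T\bbeta$ and decompose $\bX = \bbeta Z + \widetilde{\bX}$, where $\widetilde{\bX} := \bX - \bbeta Z$. Since $\bX$ is Gaussian and $\cov(\widetilde{\bX}, Z) = \bbeta - \bbeta\|\bbeta\|_2^2 = 0$, the components $Z$ and $\widetilde{\bX}$ are jointly Gaussian and hence independent. Because $\varepsilon \perp \bX$ and $Y = f(Z,\varepsilon)$, the vector $\widetilde{\bX}$ is independent of $Y$, which yields $\e[\widetilde{\bX} \mid Y] = 0$. Consequently, the centered inverse regression curve simplifies to
\begin{equation*}
\bm(Y) = \e[\bX \mid Y] = \bbeta \, \e[Z \mid Y],
\end{equation*}
so entrywise $\bm_j(Y) = \bbeta_j \, \e[Z \mid Y]$.

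Next I would read off the two claims of the lemma from this identity. For $j \notin \cS_{\bbeta}$, one has $\bbeta_j = 0$ and hence $\bm_j(Y) \equiv 0$, giving $\var(\bm_j(Y)) = 0$. For $j \in \cS_{\bbeta}$, we use $\bbeta_j^2 = 1/s$ to obtain
\begin{equation*}
\var(\bm_j(Y)) \;=\; \bbeta_j^2 \, \var(\e[Z \mid Y]) \;=\; \frac{1}{s} \, \var(\e[Z \mid Y]).
\end{equation*}
The upper bound follows from the law of total variance: $\var(\e[Z \mid Y]) \leq \var(Z) = 1$, since $Z \sim N(0,1)$. The lower bound follows by invoking the defining property of the class $\mathcal{F}_A$, namely $\var(\e[Z \mid f(Z,\varepsilon)]) \geq A$ with $Z \sim N(0,1)$; as $Y = f(Z,\varepsilon)$, this gives $\var(\e[Z \mid Y]) \geq A$.

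There is essentially no obstacle here — the whole content of the lemma is the Gaussian orthogonal decomposition that reduces the $p$-dimensional inverse regression curve to a rescaled copy of the scalar curve $\e[Z \mid Y]$; the variance bounds are then immediate from the total variance identity and the definition of $\mathcal{F}_A$.
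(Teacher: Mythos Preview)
Your proof is correct and is essentially the same as the paper's: both arguments establish $\e[\bX\mid Y]=\bbeta\,\e[\bX\T\bbeta\mid Y]$ via the Gaussian orthogonal decomposition (the paper phrases it as $\e[\bX\mid Y]\T\bb=0$ for every $\bb\perp\bbeta$, which is the same computation), and then read off the variance bounds from the law of total variance and the definition of $\mathcal{F}_A$.
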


By Lemma \ref{signallemma} WLOG we can assume that there exists a $C_V > 0$ such that $\var[\bm_j(Y)] =\frac{C_{V}}{s}$ for any $j \in \cS_{\bbeta}$. Now we are ready to discuss the properties of the DT algorithm which we formulate below. In this paper, we assume that the sparsity $s$ is known.

\begin{algorithm}[H] \label{DTalgo}
\SetKwInOut{Input}{input}
\Input{$(Y_i, \bX_i)_{i = 1}^n$: data, $H$: number of slices, $s$: the sparsity of $\bbeta$}
\begin{enumerate}
\item Calcluate $\Vb^{jj}, j \in [p]$ -- according to formula (\ref{SIRVARestimator});
\item Collect the $s$ highest $\Vb^{jj}$ into the set $\widehat S$;
\item Output the set $\{j: \Vb^{jj} \in \widehat S\}$.
\end{enumerate}
\caption{DT algorithm}
\end{algorithm}
Recall the definitions (see \eqref{SIRVARestimator}) of $\Vb$ and $\Vb^{jj}$ the estimates of $\cov[\bbE[\vX|Y]]$ and $\var[\bm_{j}(Y)]$ respectively. To obtain the result on sample size optimality of Algorithm \ref{DTalgo}, a key point is to establish a concentration inequality for $\Vb^{jj}$. When $j \not \in \cS_{\bbeta}$, a standard deviation inequality for the $\chi^{2}$ distribution is applicable. For $j \in \cS_{\bbeta}$, we need to pay extra effort to obtain the appropriate deviation inequality, which is also the main technical contribution of this paper. Once we have established these deviation inequalities, we can show the following theorem.
\begin{theorem} \label{DTthm} Suppose $s = O(p^{1-\delta})$ for some $\delta > 0$. There exists a positive constant $\Omega$ such that, for any 
\begin{align}
\Gamma=\frac{n}{s\log(p-s)} \geq \Omega, \label{nbound}
\end{align}
the support $S$ is recovered by the DT algorithm  (i.e., $\widehat{S}=S$) with probability converging to $1$ as $n$ increases. Additionally, the number of slices can be held large enough but fixed (depending solely on $C, l, K,M,\kappa,C_V$).
\end{theorem}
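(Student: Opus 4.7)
The plan is to show that with probability converging to $1$,
\[
\min_{j \in \cS_{\bbeta}} \Vb^{jj} > \max_{j \notin \cS_{\bbeta}} \Vb^{jj},
\]
which is equivalent to $\widehat S = \cS_{\bbeta}$. By Lemma \ref{signallemma} the population values of $\Vb^{jj}$ are $\var[\bm_j(Y)] = C_V/s$ on the signal side and $0$ on the null side, so it suffices to control each $\Vb^{jj}$ around its target within error $o(1/s)$, uniformly over $j \in [p]$, and then insert a separating threshold between the two levels.

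The algebraic device is the orthogonal decomposition $\bX = U\bbeta + \bW$ with $U := \bX\T \bbeta \sim N(0,1)$ and $\bW := \bX - U\bbeta$ Gaussian and independent of $U$ (hence of $Y$). Each coordinate satisfies $\bX^j = \bbeta_j U + \bW^j$ with $\var(\bW^j) = 1 - \bbeta_j^2$, and substituting into \eqref{SIRVARestimator} yields a signal--cross--noise split
\[
\Vb^{jj} = \frac{\bbeta_j^2}{H}\sum_{h=1}^H \overline{U}_h^{\,2} + \frac{2\bbeta_j}{H}\sum_{h=1}^H \overline{U}_h\,\overline{\bW}^{\,j}_h + \frac{1}{H}\sum_{h=1}^H (\overline{\bW}^{\,j}_h)^2,
\]
in which every $\bW^j$ is independent of $Y$ and hence of the slicing.

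For $j \notin \cS_{\bbeta}$ only the noise piece survives; since $\bW^j_{h,i} \iid N(0,1)$ independently of the slicing, $n\Vb^{jj} \sim \chi^2_H$, and Laurent--Massart tail bounds together with a union bound over the $(p-s)$ null coordinates place $\max_{j \notin \cS_{\bbeta}} \Vb^{jj}$ within $H/n + O\bigl(\sqrt{H \log(p-s)}/n + \log(p-s)/n\bigr)$ of $0$, which is $o(1/s)$ under $\Gamma \geq \Omega$ and bounded $H$. For $j \in \cS_{\bbeta}$ the noise piece is handled identically; the cross piece, upon conditioning on the $Y$-order statistics and the vector $(U_{h,i})$, is mean-zero Gaussian with conditional variance $O(1/(sn))$ and is dispatched by a Hoeffding bound. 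The signal piece $s^{-1}H^{-1}\sum_h \overline{U}_h^{\,2}$ has conditional expectation of the form $s^{-1}H^{-1}\sum_h (\e[U \mid Y \in S_h])^2 + O(H/(ns))$, and by sliced stability (Definition \ref{def:sliced_stable}) applied to the centered inverse regression curve $\e[U \mid Y]$, together with $\var[\e[U \mid Y]] = C_V$, this differs from $C_V/s$ by at most $O(H^{\kappa - 1}/s) + O(H/(ns))$, which is $o(1/s)$ once the fixed number of slices $H$ is chosen large in terms of $(C, l, K, M, \kappa, C_V)$.

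The main obstacle is establishing concentration of $H^{-1}\sum_h \overline{U}_h^{\,2}$ around its conditional mean: the $U_{h,i}$'s enter both through the data-dependent slicing (a complicated function of order statistics) and inside the squared slice means, so this quantity is not a quadratic form in independent Gaussians and lies outside the reach of standard Hanson--Wright type inequalities. We would close this gap via a tailored Bernstein-type inequality for averages of squared slice means of a Gaussian sample, namely Lemma \ref{bernst} advertised in the introduction. A final union bound over the $s$ signal coordinates (dominated by the $\log(p-s)$ factor from the null side) then delivers $\min_{j \in \cS_{\bbeta}} \Vb^{jj} \geq C_V/s - o(1/s)$, and comparing with the null bound above completes the proof.
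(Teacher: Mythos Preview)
Your route is correct and genuinely different from the paper's. The paper never uses the orthogonal split $\bX^j=\bbeta_jU+\bW^j$; it works coordinate by coordinate, establishing the uniform bound $|\Vb^{jj}-\var[\bm_j(Y)]|\le 6C_V/(\gamma s)$ for all $j\in[p]$ via DKW (the event $E$ of \eqref{DKWbound}), the slice-mean concentration of Lemma~\ref{slicedmeanconc} applied with $G=[p]$, and the algebra of Lemmas~\ref{simplelemma} and~\ref{calculation}. Your decomposition is cleaner for the specific design $\bX\sim N_p(0,\mathbb{I}_p)$: the noise block is exactly a scaled $\chi^2_H$ for every $j$, and the signal block $s^{-1}H^{-1}\sum_h\overline U_h^{\,2}$ is common to all $j\in\cS_\bbeta$, so no union bound over the $s$ signal coordinates is needed for it. The paper's argument, by contrast, is written under the weaker coordinate-wise hypotheses (i)--(iii) that open Section~\ref{secpfthm1} (sub-Gaussian entries, $\e[\bX^j\mid Y]=0$ for nulls) and does not rely on the Gaussian projection identity, so it survives beyond $\bX\sim N_p(0,\mathbb{I}_p)$.

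One correction and two technicalities. Lemma~\ref{bernst} is \emph{not} a tail bound for ``averages of squared slice means'': it controls the \emph{linear} sample mean $\overline X$ of i.i.d.\ draws from a conditioned sub-Gaussian law. The paper invokes it inside Lemma~\ref{slicedmeanconc} to get $\max_h|\overline\bX^j_{h,1:(m-1)}-\bmu_h^j|\le\eta$ and then passes to squares through Lemma~\ref{calculation}; you would do the same, applying Lemma~\ref{bernst} once to $U$ (with $\eta$ a fixed constant, since $H^{-1}\sum_h\overline U_h^{\,2}$ need only lie within a constant of $C_V$) and then squaring. The technicalities you gloss over but must track: only $m-1$ of the $m$ observations per slice are conditionally i.i.d.\ given the endpoints (the boundary point $\bX_{h,m}$ is handled separately throughout the paper); and replacing $\p(Y\in S_h)$ by $H^{-1}$ in the sliced-stability step costs $\asymp\epsilon\sum_h(\e[U\mid Y\in S_h])^2\asymp\epsilon H C_V$, so the DKW scale $\epsilon$ must be chosen of order $1/(\gamma H)$ as in \eqref{epsconst}, not merely $o(1)$, for this bias to be $\le C_V/\gamma$.
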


\begin{remark}[Choice of $H$]  \label{rmk:on:H} \normalfont
An interesting by-product of our analysis is that we can choose the number of slices $H$ large enough but finite. When dimension $p$ is fixed, this has already been observed in the literature \citep{li1991sliced}, however in high dimensional setting to the best of our knowledge this property has not been discussed. 
\end{remark}

Clearly the DT algorithm does not recover the signed support of $\bbeta$ standalone.  One can apply DT first to select the variables and then apply the SIR procedure to obtain an estimate of the principal eigenvector. To obtain the signed support of $\bbeta$, take the sign of the principal eigenvector. We summarize this algorithm in the following:

\begin{algorithm}[H]
\SetKwInOut{Input}{input} \label{algoDTSIR}
\Input{$(Y_i, \bX_i)_{i = 1}^n$: data, $H$: number of slices, $s$: the sparsity of $\bbeta$}
\begin{enumerate}
\item Perform Algorithm \ref{DTalgo} to obtain the set $\{j: \Vb^{jj} \in \widehat S\}$.
\item Evaluate ${\bv}$ --- the principal eigenvector of the matrix $\Vb_{\widehat S, \widehat S}$.
\item Output $\sign(\bv)$.
\end{enumerate}
\caption{DT-SIR}
\end{algorithm}
We note that this algorithm recovers the signed support, up to multiplication by $\pm 1$. The following result is a direct
corollary of Theorem \ref{DTthm}, whose proof is omitted. 

\begin{proposition} \label{propDTSIR} Under the assumptions of Theorem \ref{DTthm}, with a potentially bigger value of $\Omega$, applying Algorithm \ref{algoDTSIR} restores the signed support (up to a sign) with asymptotic probability converging to $1$. In fact Algorithm \ref{algoDTSIR} works when $\bX \sim N_p(\bmu, \mathbb{I}_p)$ with $\Vb$ substituted with $\widehat \Vb$ where the $jk$\textsuperscript{th} entry of $\widehat \Vb$ is defined as $\widehat \Vb^{jk} = \frac{1}{H} \sum_{h = 1}^H (\overline \bX^j_h - \overline \bX^j) (\overline \bX^k_h - \overline \bX^k)$. 
\end{proposition}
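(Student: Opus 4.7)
The plan is to combine the support-recovery guarantee of Theorem \ref{DTthm} with a perturbation analysis for the principal eigenvector of $\Vb_{\widehat S, \widehat S}$ on the event that DT succeeds. Theorem \ref{DTthm} gives $\PP(\widehat S = \cS_\bbeta) \to 1$ whenever $\Gamma \geq \Omega$, so it suffices to show that on this event the principal eigenvector $\bv$ of $\Vb_{\cS_\bbeta,\cS_\bbeta}$ satisfies $\sign(\bv_i) = \alpha\,\sign(\beta_i)$ for every $i \in \cS_\bbeta$ and some global $\alpha \in \{\pm 1\}$.

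First I would pin down the population geometry. Because $\vX \sim N_p(\vzero,\vI_p)$ and $\bbeta$ is unit-norm, the residual $\vX - (\vX\T\bbeta)\bbeta$ is independent of $\vX\T\bbeta$ and hence of $Y$, so $\bbE[\vX\mid Y] = \eta(Y)\,\bbeta$ with $\eta(Y) := \bbE[\vX\T\bbeta \mid Y]$. Consequently $\Lambda := \cov[\bbE[\vX\mid Y]] = \lambda^*\,\bbeta\bbeta\T$ with $\lambda^* := \var[\eta(Y)] \geq A$ by the class assumption, and the restricted matrix $\Lambda_{\cS_\bbeta,\cS_\bbeta} = \lambda^*\,\bbeta_{\cS_\bbeta}\bbeta_{\cS_\bbeta}\T$ is rank-one with non-zero eigenvalue $\lambda^*$, eigenvector $\bbeta_{\cS_\bbeta}$, and eigengap $\lambda^*$.

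Next I would upgrade the Bernstein-type concentration from the proof of Theorem \ref{DTthm} (Lemma \ref{bernst} combined with sliced stability) from the $s$ diagonal entries of $\Vb_{\cS_\bbeta,\cS_\bbeta}$ to a joint statement over all $s^2$ entries, and in particular to a bound on the structured quantity $\|(\Vb_{\cS_\bbeta,\cS_\bbeta} - \Lambda_{\cS_\bbeta,\cS_\bbeta})\,\bbeta_{\cS_\bbeta}\|_2$, which is what actually drives the eigenvector perturbation. A $\sin\theta$/Davis--Kahan bound then yields
\[
\min_{\alpha \in \{\pm 1\}} \|\alpha \bv - \bbeta_{\cS_\bbeta}\|_2 \;\leq\; \frac{C\,\|(\Vb_{\cS_\bbeta,\cS_\bbeta} - \Lambda_{\cS_\bbeta,\cS_\bbeta})\,\bbeta_{\cS_\bbeta}\|_2}{\lambda^*},
\]
and choosing $\Omega$ sufficiently large makes the right-hand side $o(1/\sqrt{s})$ with probability tending to $1$. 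Since $|\beta_i| = 1/\sqrt{s}$ for every $i \in \cS_\bbeta$, an $\ell_2$-error below $1/\sqrt{s}$ forces the coordinate-wise signs of $\alpha\bv$ and $\bbeta_{\cS_\bbeta}$ to coincide. The extension to $\vX \sim N_p(\bmu,\vI_p)$ is immediate: both each slice mean $\overline{\vX}_h$ and the global mean $\overline{\vX}$ are translated by $\bmu$, so the centered estimator $\widehat \Vb$ is invariant under $\vX \mapsto \vX + \bmu$, and its law under $N_p(\bmu,\vI_p)$ equals the law of $\Vb$ under $N_p(\vzero,\vI_p)$.

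I expect the main obstacle to be sharpening the perturbation estimate to the level $o(1/\sqrt{s})$ under the budget $n \asymp s\log(p-s)$. A crude $\|M\|_2 \leq s\|M\|_{\max}$ reduction combined with an entrywise rate $O(\sqrt{\log(p-s)/n})$ would demand $n \gg s^3 \log(p-s)$, far above what is assumed. The argument must instead exploit two pieces of structure: (i) the rank-one form of $\Lambda_{\cS_\bbeta,\cS_\bbeta}$, which allows one to control $(\Vb_{\cS_\bbeta,\cS_\bbeta} - \Lambda_{\cS_\bbeta,\cS_\bbeta})\,\bbeta_{\cS_\bbeta}$ directly rather than the full operator norm $\|\Vb_{\cS_\bbeta,\cS_\bbeta} - \Lambda_{\cS_\bbeta,\cS_\bbeta}\|_2$; and (ii) that every $|\beta_i| = 1/\sqrt{s}$, so that inner products with $\bbeta_{\cS_\bbeta}$ average rather than merely sum, making entry fluctuations effectively scale as $|\beta_j|/\sqrt{n} \asymp 1/\sqrt{ns}$ instead of $1/\sqrt{n}$.
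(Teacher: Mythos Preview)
The paper omits the proof of this proposition, but the intended argument is clear from the machinery built for Theorem~\ref{SDPthm}: equation~(\ref{errorbound}) (via Lemma~\ref{covbound}) gives the \emph{entrywise} bound $\|\Vb_{\cS_\bbeta,\cS_\bbeta}-C_V\bbeta_{\cS_\bbeta}\bbeta_{\cS_\bbeta}\T\|_{\max}\le \tfrac{6C_V}{\gamma s}$ on the event $\widehat S=\cS_\bbeta$, and then Lemma~\ref{wainwriglemma6} (the $\ell_\infty$ eigenvector-perturbation lemma of Amini--Wainwright) converts $\|\Nb\|_{\max}\le \tfrac{C_V}{40s}$ directly into $\|\bv-\bbeta_{\cS_\bbeta}\|_\infty\le \tfrac{1}{2\sqrt{s}}$, which forces the signs to match. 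No $\ell_2$ Davis--Kahan step is involved.

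Your route via a $\sin\theta$ bound on $\|(\Vb_{\cS_\bbeta,\cS_\bbeta}-\Lambda_{\cS_\bbeta,\cS_\bbeta})\bbeta_{\cS_\bbeta}\|_2$ runs into the very obstacle you flagged, and the fix you sketch does not close the gap under the paper's analysis. The constants $H,\epsilon,\eta,\gamma$ in (\ref{Hconst})--(\ref{mconst}) are all fixed (the proposition only enlarges the constant $\Omega$), so (\ref{errorbound}) delivers $\|\Nb\|_{\max}=\Theta(1/s)$ with a constant that does \emph{not} decay in $s$. From this you get $\|\Nb\bbeta_{\cS_\bbeta}\|_2\le \sqrt{s}\cdot\max_j\|\Nb_{j\cdot}\|_2\le \sqrt{s}\cdot\sqrt{s}\,\|\Nb\|_{\max}=O(1)$ and hence $\|\bv-\bbeta_{\cS_\bbeta}\|_2=O(1)$ via Davis--Kahan, which is far from the $o(1/\sqrt{s})$ you need. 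The ``averaging'' intuition that coordinates of $\Nb\bbeta_{\cS_\bbeta}$ should scale like $1/\sqrt{ns}$ would require a fresh concentration argument for the quantities $\tfrac{1}{H}\sum_h\overline{\bX}^j_h\,\overline{(\bX\T\bbeta)}_h$ that is not developed in the paper, and it does not follow from the entrywise bound alone because the entries $\Nb_{jk}$ for varying $k$ are highly dependent.

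The paper's $\ell_\infty$ argument sidesteps all of this. Writing the eigenvector equation $(\lambda^*\bbeta_{\cS_\bbeta}\bbeta_{\cS_\bbeta}\T+\Nb)\bv=\gamma_1\bv$ and rearranging, one needs only $\|\Nb\bv\|_\infty\le \max_j\|\Nb_{j\cdot}\|_2\le \sqrt{s}\,\|\Nb\|_{\max}=O(1/\sqrt{s})$, which together with $\gamma_1\approx\lambda^*$ gives $\|\bv-\bbeta_{\cS_\bbeta}\|_\infty=O(1/\sqrt{s})$ directly. So the missing ingredient in your plan is precisely Lemma~\ref{wainwriglemma6}: replace the Davis--Kahan $\ell_2$ step by this $\ell_\infty$ perturbation and the proof goes through with no further refinement of the concentration bounds. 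Your treatment of the $N_p(\bmu,\mathbb{I}_p)$ case via translation invariance of $\widehat\Vb$ is fine.
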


\subsubsection{Optimality of SDP}
Algorithm \ref{algoDTSIR} is a two-step procedure, selecting variables by applying DT and then obtaining the SIR estimate of $\bbeta$. Recent advances of optimization theory provide us with a more sophisticated one-step approach to obtain a sparse principal eigenvector. It is well known that the  principal eigenvector of a symmetric positive definite matrix $\Ab$ is given by:
\begin{align*}
\widehat \bz  = \operatornamewithlimits{argmax}_{\bz \in \mathbb{R}: \|\bz\|_2 = 1} \bz\T  \Ab \bz
\end{align*}
and the principal eigenvalue is $\widehat \bz\T \Ab \bz$. When $\bz$ is sparse, i.e., $\|\bz\|_{0}\leq s$ , the above optimization problem with an additional sparsity constraint is computationally expensive. To remedy this difficulty, \cite{d2007direct} proposed an SDP approach to solve the sparse optimization problem. They suggested to solve the following convex program:
\begin{align} \label{SDPform}
\widehat \Zb = \operatornamewithlimits{argmax}_{\tr(\Zb) = 1, \Zb \in \mathbb{S}_+^{p}} \tr(\Ab \Zb) - \lambda_n \sum_{i,j=1}^p |\Zb_{ij}|,
\end{align}
where $\mathbb{S}_+^{p}$ is the set of all the $p\times p$ positive semi-definite matrices.
If the solution happens to be a rank 1 solution, then it is of the form $\widehat \Zb = \widehat \bz \widehat \bz\T$, and hence we can easily obtain an estimate of the principal eigenvector.

In the following algorithm we summarize the SDP approach, tailored for the signed support recovery of SIM. We remind the reader, that this algorithm recovers the signed support, up to multiplication by a global constant equal to $\pm 1$. 

\begin{algorithm}[H] \label{SDPforSIR}
\SetKwInOut{Input}{input}
\Input{$(Y_i, \bX_i)_{i = 1}^n$: data, $H$: number of slices, $s$: the sparsity of $\bbeta$}
\begin{enumerate}
\item Calcluate the matrix $\Vb$ --- as given in (\ref{SIRVARestimator});
\item Obtain the matrix $\widehat \Zb$ by solving (\ref{SDPform}), with $\Ab = \Vb$;
\item Find the principal eigenvector $\widehat \bz$ of $\widehat \Zb$;
\item Output $\sign(\widehat \bz)$.
\end{enumerate}
\caption{SDP algorithm for SIR}
\end{algorithm}
The performance of this algorithm is guaranteed by the following theorem.

\begin{theorem} \label{SDPthm} Suppose $s = O(p^{1-\delta})$ for some $\delta > 0$, then there is a positive constant $\Omega$ such that, for any 
\begin{align}
\Gamma=\frac{n}{s\log(p-s)} \geq \Omega, \label{nboundSDP}
\end{align}
with a properly chosen the tuning parameter $\lambda_n$ , Algorithm \ref{SDPforSIR} recovers the signed support with probability converging to 1, i.e. $\p(\sign(\widehat \bz) = \sign(\bbeta)) \rightarrow 1$\footnote{We understand that $\widehat \bz$ is selected so that $\widehat \bz\T\bbeta \geq 0$ by convention.}.
\end{theorem}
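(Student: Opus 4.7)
The plan is to execute the primal--dual witness (PDW) construction of \cite{amini2008high}, adapted to the SIR matrix $\Vb$. The key structural fact is that since $\vX\sim N_p(0,\mathbb{I}_p)$ and $Y$ depends on $\vX$ only through $\vX\T\bbeta$, the tower property gives $\bbE[\vX\mid Y]=\bbeta\,\bbE[\vX\T\bbeta\mid Y]$, so that the population inverse-regression covariance
\begin{equation*}
V:=\cov[\bbE(\vX\mid Y)]=\var(\bbE[\vX\T\bbeta\mid Y])\,\bbeta\bbeta\T=C_V\,\bbeta\bbeta\T
\end{equation*}
is rank one, supported on $\cS_\bbeta\times\cS_\bbeta$, with $\bbeta$ as its unique unit leading eigenvector. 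The population-level SDP therefore trivially returns $\bbeta\bbeta\T$, and the entire argument reduces to propagating entrywise concentration of $\Vb-V$ through the PDW conditions.

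Concretely, I would construct an oracle primal candidate $\widehat{\Zb}=\widehat{\bz}\widehat{\bz}\T$ with $\widehat{\bz}$ supported on $\cS_\bbeta$, taking $\widehat{\bz}_{\cS_\bbeta}$ to be the unit leading eigenvector of $\Vb_{\cS_\bbeta,\cS_\bbeta}$. Using the entrywise deviation bounds for $\Vb^{jk}$ that drive Theorem~\ref{DTthm} (and ultimately Lemma~\ref{bernst}), the spectral gap $C_V/s$ of $V_{\cS_\bbeta,\cS_\bbeta}$, and the fact that $|\bbeta_j|=s^{-1/2}$ on the support, a Davis--Kahan argument combined with an $\ell_2\to\ell_\infty$ eigenvector comparison yields $\sign(\widehat{\bz}_{\cS_\bbeta})=\sign(\bbeta_{\cS_\bbeta})$ with probability tending to one whenever $\Gamma\geq\Omega$.

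To certify that $\widehat{\Zb}$ is the unique maximizer of (\ref{SDPform}), I would build a matching dual certificate $(\eta,U,W)$ verifying the KKT system
\begin{equation*}
\Vb-\lambda_n U-\eta\mathbb{I}_p+W=0,\quad W\succeq 0,\quad W\widehat{\Zb}=0,\quad U\in\partial\|\widehat{\Zb}\|_{1,1}.
\end{equation*}
The support block forces $U_{\cS_\bbeta,\cS_\bbeta}=\sign(\widehat{\bz}_{\cS_\bbeta})\sign(\widehat{\bz}_{\cS_\bbeta})\T$, and the relation $W_{\cS_\bbeta,\cS_\bbeta}\widehat{\bz}_{\cS_\bbeta}=0$ pins down $\eta$ as the leading eigenvalue of the rank-one perturbation $\Vb_{\cS_\bbeta,\cS_\bbeta}-\lambda_n\sign(\widehat{\bz}_{\cS_\bbeta})\sign(\widehat{\bz}_{\cS_\bbeta})\T$. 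The off-support entries of $U$ are then forced by $W\widehat{\bz}=0$ on rows in $\cS_\bbeta^c$, while $W_{\cS_\bbeta^c,\cS_\bbeta^c}$ is determined from stationarity. Choosing $\lambda_n\asymp\sqrt{\log(p-s)/(sn)}$---large enough to dominate the off-support empirical noise, yet small enough relative to the gap $C_V/s$---one verifies the remaining two constraints $|U_{jk}|<1$ off the support and $W\succeq 0$.

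The main obstacle, and the source of the sharp $s\log(p-s)$ threshold, lies in the sign-feasibility $|U_{jk}|<1$ for $(j,k)\in\cS_\bbeta^c\times\cS_\bbeta$. Because $V_{\cS_\bbeta^c,\cS_\bbeta}\equiv 0$ but no sign cancellation is available empirically, this reduces to a uniform bound of the form $\max_{j\in\cS_\bbeta^c}\|\Vb_{j,\cS_\bbeta}\|_\infty\lesssim\lambda_n\min_{i\in\cS_\bbeta}|\widehat{\bz}_{\cS_\bbeta,i}|$, whose verification via the Bernstein-type inequality of Lemma~\ref{bernst} and a union bound over the $s(p-s)$ off-support entries is precisely what forces $n\gtrsim s\log(p-s)$. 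The PSD check on $\cS_\bbeta^c\times\cS_\bbeta^c$ is comparatively mild: the curvature $\eta\mathbb{I}_{\cS_\bbeta^c}$ is of order $C_V/s$, which dominates both the entrywise fluctuations and the Schur-complement contribution from the support block in the regime $\Gamma\geq\Omega$.
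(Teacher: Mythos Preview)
Your approach---the Amini--Wainwright primal--dual witness construction applied to $\Vb$---is exactly what the paper does. There is, however, an inconsistency in your primal candidate that would make the KKT verification fail as written. You take $\widehat{\bz}_{\cS_\bbeta}$ to be the leading eigenvector of $\Vb_{\cS_\bbeta,\cS_\bbeta}$, but then complementary slackness $W_{\cS_\bbeta,\cS_\bbeta}\widehat{\bz}_{\cS_\bbeta}=0$, with $W=\eta\mathbb{I}-\Vb+\lambda_n U$ and $U_{\cS_\bbeta,\cS_\bbeta}=\sign(\widehat{\bz}_{\cS_\bbeta})\sign(\widehat{\bz}_{\cS_\bbeta})\T$, forces $\widehat{\bz}_{\cS_\bbeta}$ to be an eigenvector of $\Vb_{\cS_\bbeta,\cS_\bbeta}-\lambda_n\sign(\widehat{\bz}_{\cS_\bbeta})\sign(\widehat{\bz}_{\cS_\bbeta})\T$. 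These two eigenproblems coincide only if $\widehat{\bz}_{\cS_\bbeta}\propto\sign(\widehat{\bz}_{\cS_\bbeta})$, i.e.\ all entries of the sample eigenvector have equal magnitude, which will not hold. The paper resolves this by taking $\widehat{\bz}_{\cS_\bbeta}$ from the outset as the leading eigenvector of $\widetilde{\Vb}_{\cS_\bbeta,\cS_\bbeta}:=\Vb_{\cS_\bbeta,\cS_\bbeta}-\lambda_n\sign(\bbeta_{\cS_\bbeta})\sign(\bbeta_{\cS_\bbeta})\T$, and then verifies $\sign(\widehat{\bz}_{\cS_\bbeta})=\sign(\bbeta_{\cS_\bbeta})$ a~posteriori via an $\ell_\infty$ eigenvector perturbation bound (Lemma~\ref{wainwriglemma6}), making the sign matrix self-consistent.

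The paper's remaining choices are also simpler than what you sketch. It fixes $\lambda_n=C_V/(2s)$ rather than your $n$-dependent $\sqrt{\log(p-s)/(sn)}$, and rather than leaving the off-support entries of $U$ underdetermined by the single linear constraint $W_{j,\cS_\bbeta}\widehat{\bz}_{\cS_\bbeta}=0$, it sets $U_{\cS_\bbeta^c,\cS_\bbeta}=\lambda_n^{-1}\Vb_{\cS_\bbeta^c,\cS_\bbeta}$ and $U_{\cS_\bbeta^c,\cS_\bbeta^c}=\lambda_n^{-1}\Vb_{\cS_\bbeta^c,\cS_\bbeta^c}$, so that the off-support blocks of $\widetilde{\Vb}$ vanish exactly. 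This eliminates any separate PSD check on $\cS_\bbeta^c\times\cS_\bbeta^c$ and reduces sign feasibility to $\max_{k\in\cS_\bbeta^c,\,j}|\Vb^{jk}|\le\lambda_n$, which the paper handles by Cauchy--Schwarz $|\Vb^{jk}|\le(\Vb^{jj}\Vb^{kk})^{1/2}$ together with a $\chi^2_H$ tail bound on $\Vb^{kk}$ for $k\in\cS_\bbeta^c$, rather than entrywise Bernstein bounds on the $s(p-s)$ off-support entries.
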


\begin{remark}\label{centering:SDP:remark} \normalfont In fact Algorithm \ref{SDPforSIR} works when $\bX \sim N_p(\bmu, \mathbb{I}_p)$ with $\Vb$ substituted with $\widehat \Vb$ where the $jk$\textsuperscript{th} entry of $\widehat \Vb$ is defined as $\widehat \Vb^{jk} = \frac{1}{H} \sum_{h = 1}^H (\overline \bX^j_h - \overline \bX^j) (\overline \bX^k_h - \overline \bX^k)$. The proof of this fact is trivial and is omitted.
\end{remark}

\section{Numerical Experiments and Data Analysis}  \label{secnumericalres}

We open this section with extensive numerical studies and in Section \ref{real:data:example} we apply our algorithms to a real dataset.

\subsection{Simulations} 
In this section we compare Algorithms \ref{algoDTSIR} and \ref{SDPforSIR} in terms of signed support recovery. We consider the following scenarios:
\begin{align}
Y & = \bX\T\bbeta + \sin(\bX\T\bbeta) + N(0,1), \label{sinmodel}\\
Y & = 2 \operatorname{atan}(\bX\T\bbeta) + N(0,1), \label{X3model} \\
Y & = (\bX\T\bbeta)^3 + N(0,1), \label{Xe3model}\\
Y & = \operatorname{sinh}(\bX\T\bbeta) + N(0,1). \label{regmodel}
\end{align}

In each simulation we create a sample of size $n$ with dimensionality $p$ and sparsity levels $s = \sqrt{p}$ ($s = \log(p)$ \textit{resp.}) and we vary the rescaled sample size $\Gamma \in[0,30]$ for the DT-SIR and $\Gamma \in [0,40]$ for the SDP approach. The vector $\bbeta$ is selected in the following manner:
$$\beta_j = \frac{1}{\sqrt{s}}, ~ j \in [s-1], ~~ \beta_{s} = -\frac{1}{\sqrt{s}}, \mbox{ and }\beta_j = 0, \mbox{ for } j > s.$$ 
Each simulation is repeated $500$ times, and we report the proportion of correctly recovered signed supports of the vector $\bbeta$. We remark that in general, it should not be expected that the phase transition described in Theorems \ref{DTthm} and \ref{SDPthm} to occur at the same places for these $4$ models.  

We first explore the predictions of Proposition \ref{propDTSIR} and Algorithm \ref{algoDTSIR}. Even though we provide theoretical values of the constants $H$ and $m$, we ran all simulations with $H = 10$ slices. We believe this scenario, is still reflective of the true nature of the DT-SIR algorithm, as the theoretical value of $H$ we provide is not optimized in any fashion. In Figure \ref{figuressqrtp}, we present DT-SIR results from plots for different $p$ values in the regime $s = \sqrt{p}$.
\begin{figure}[H]\caption{Efficiency Curves for DT-SIR, $s = \sqrt{p},~ \Gamma = \frac{n}{s\log(p-s)} \in [0,30]$} \label{figuressqrtp}
\begin{subfigure}{.5\textwidth}
  \centering
  \includegraphics[width=.8\linewidth]{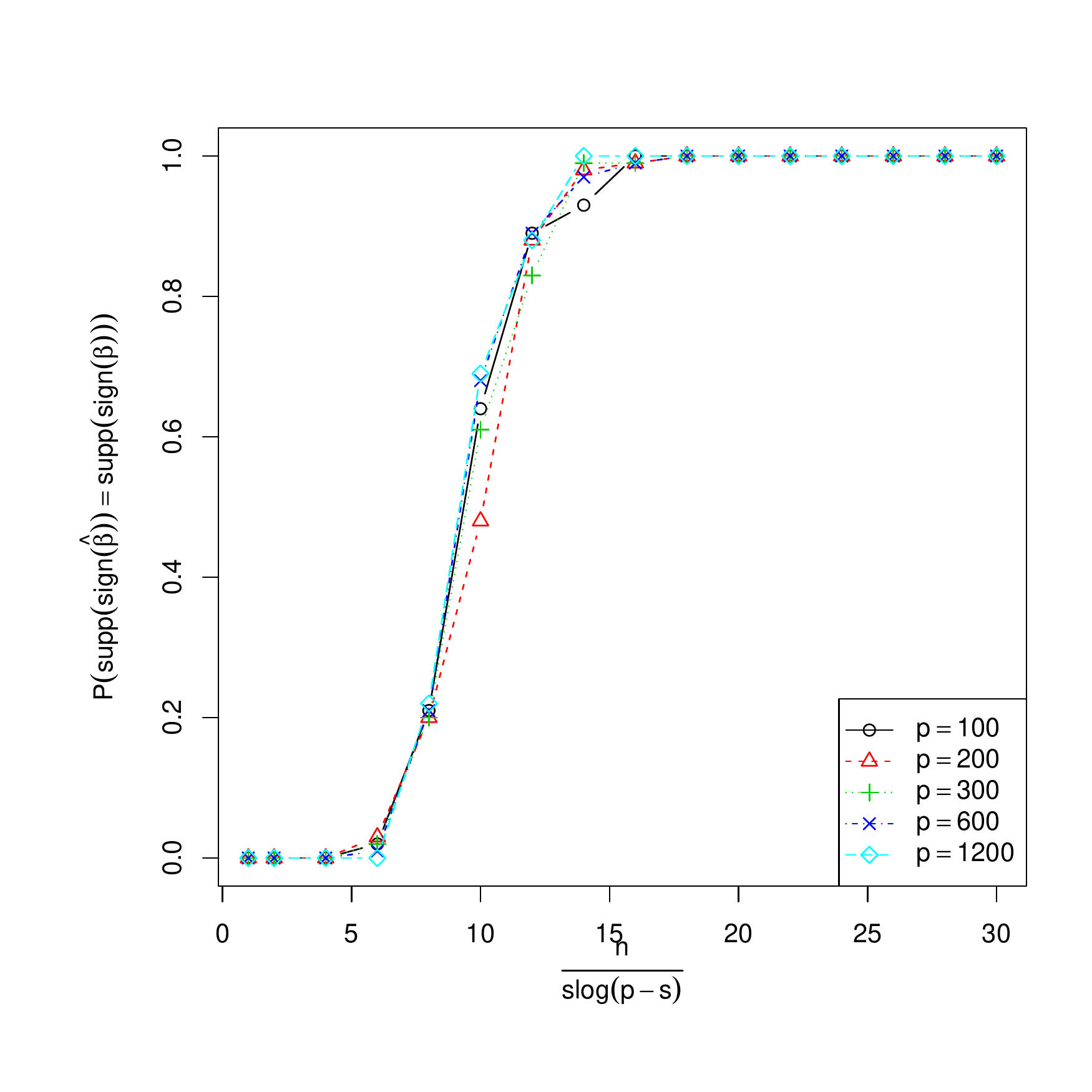}
  \caption{Model (\ref{sinmodel})}
  \label{fig:sfig1}
\end{subfigure}%
\begin{subfigure}{.5\textwidth}
  \centering
  \includegraphics[width=.8\linewidth]{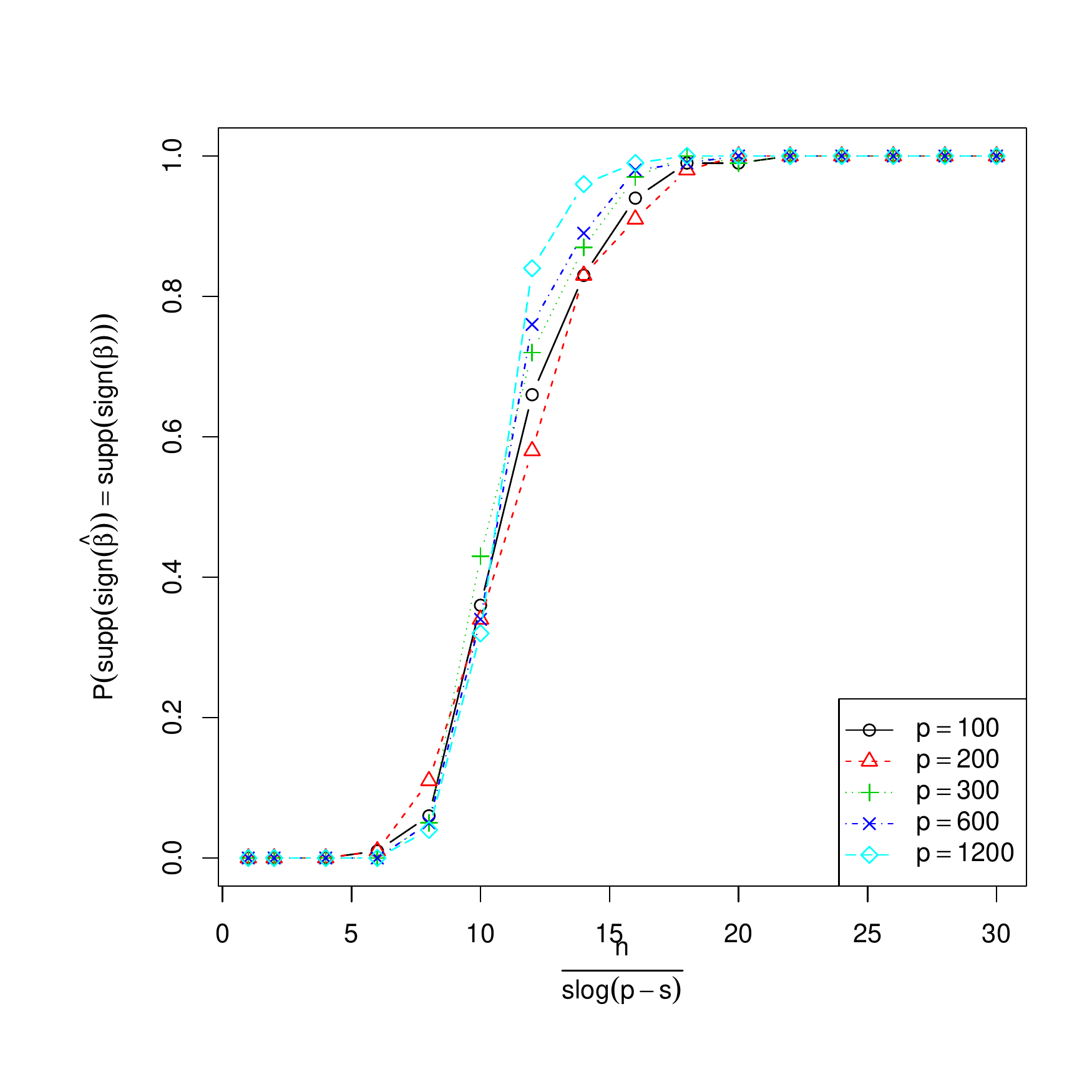}
  \caption{Model (\ref{X3model})}
  \label{fig:sfig2}
\end{subfigure}%

\begin{subfigure}{.5\textwidth}
  \centering
  \includegraphics[width=.8\linewidth]{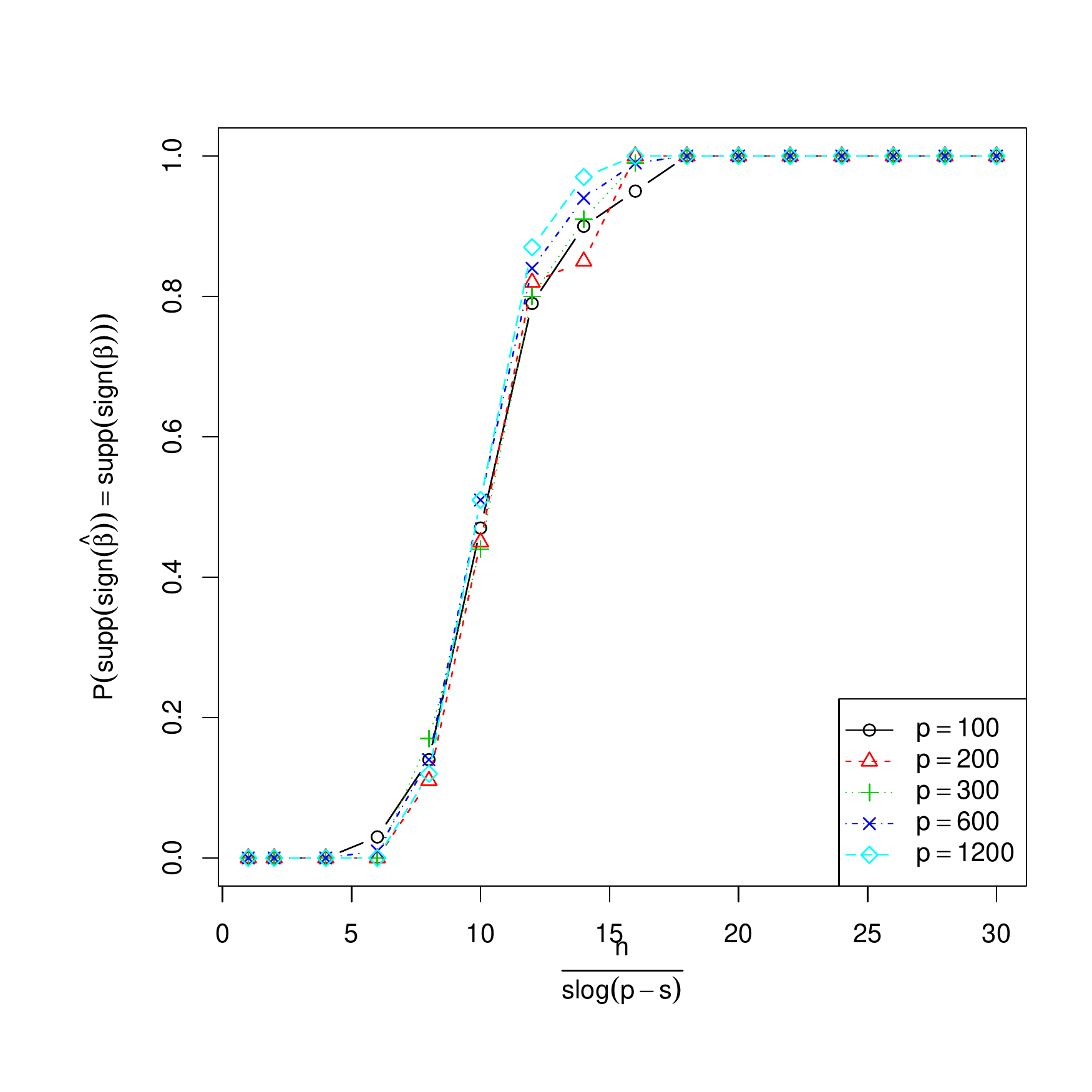}
  \caption{Model (\ref{Xe3model})}
  \label{fig:sfig3}
\end{subfigure}%
\begin{subfigure}{.5\textwidth}
  \centering
  \includegraphics[width=.8\linewidth]{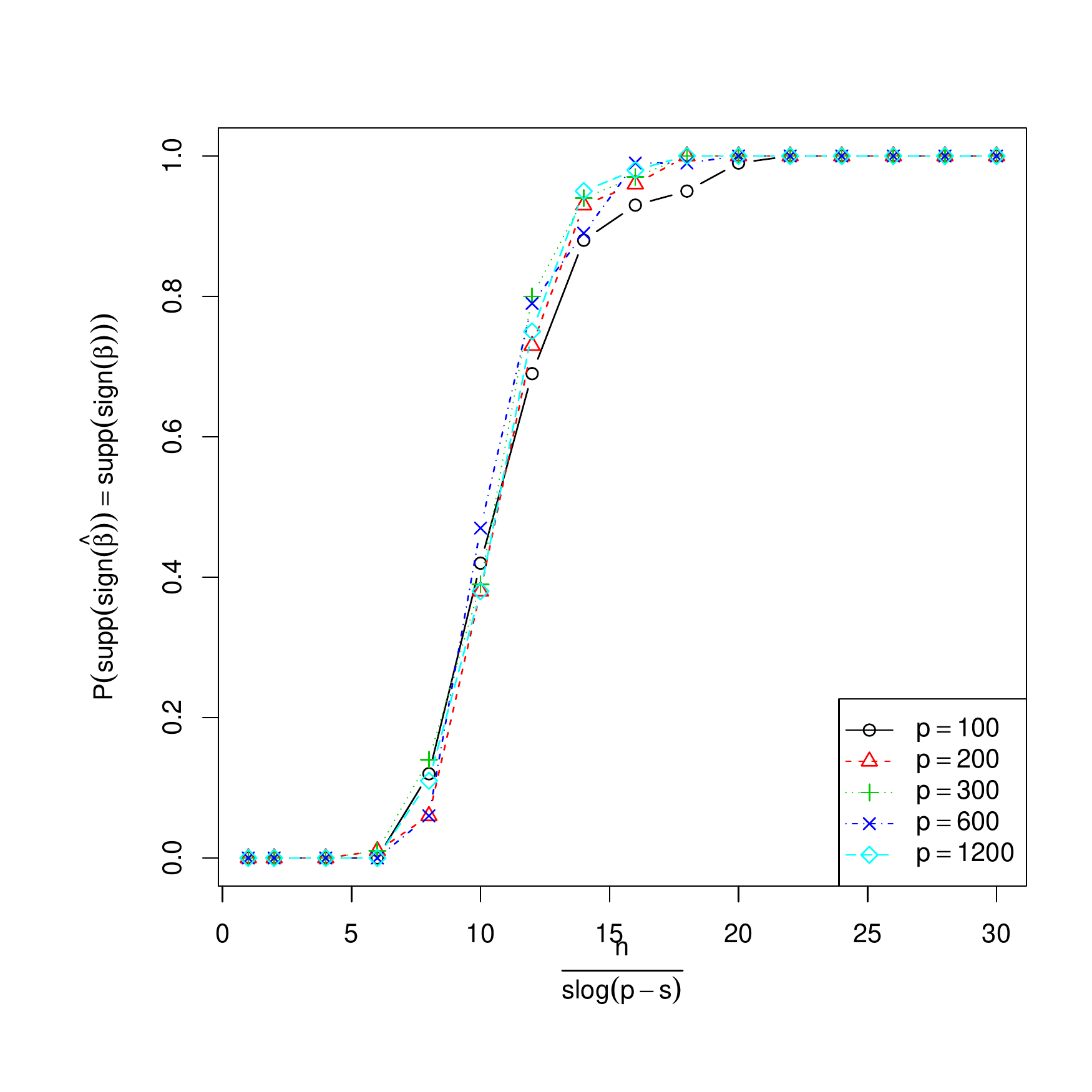}
  \caption{Model (\ref{regmodel})}
  \label{fig:sfig4}
\end{subfigure}%
\end{figure}

On the $X$-axis we have plotted the rescaled sample size $\frac{n}{s \log(p-s)}$ and on the $Y$-axis is the estimated  probability of successful signed support recovery. We would refer to these curves as efficiency curves (EC). The EC plots in the case $s = \sqrt{p}$, are very similar to the corresponding plots in the regime $s = \log(p)$, which can be seen in Figure \ref{log:DT:SIR} in Appendix \ref{app:extra:fig}. Both EC plots are in concordance with the predictions from our theoretical results. We can distinctly see the phase transition occurring in approximately the same place regardless of the values of the dimension $p$.

Next, we present the corresponding ECs for Algorithm \ref{SDPforSIR}. We used the code from an efficient implementation of program (\ref{SDPform}), as suggested in \cite{zhang2011large}. The code was kindly provided to us by the authors of \cite{zhang2011large}. In Figure \ref{SDPfigureslog} we plot the ECs for the four models in the case when $s = \sqrt{p}$. Due to running time limitations we only show scenarios where $p \in \{100,200,300\}$.

\begin{figure}[H]\caption{Efficiency Curves for SDP, $s = \sqrt{p},~ \Gamma = \frac{n}{s\log(p-s)} \in [0,40]$} \label{SDPfigureslog}
\begin{subfigure}{.5\textwidth}
  \centering
  \includegraphics[width=.8\linewidth]{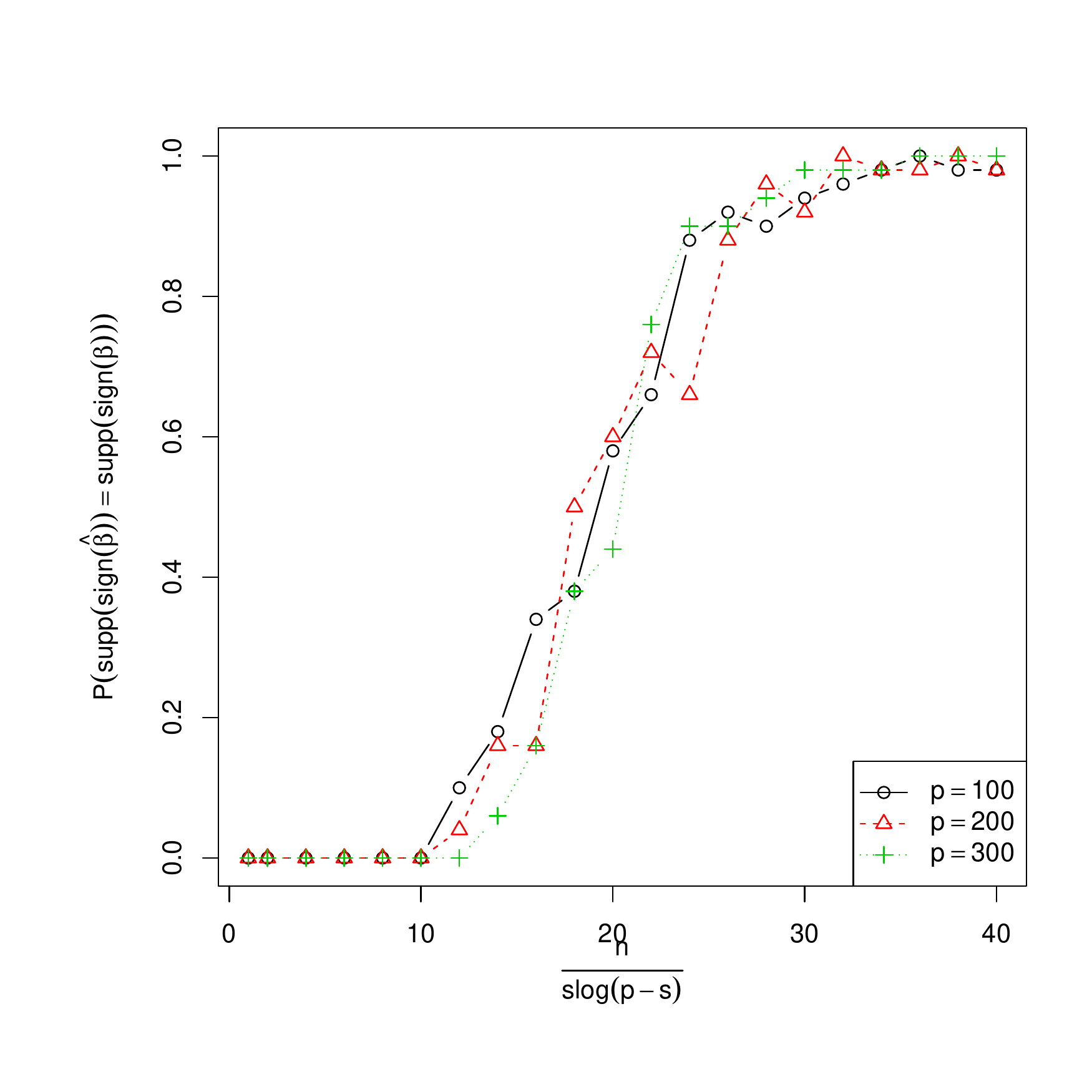}
  \caption{Model (\ref{sinmodel})}
  \label{fig:sfig1}
\end{subfigure}%
\begin{subfigure}{.5\textwidth}
  \centering
  \includegraphics[width=.8\linewidth]{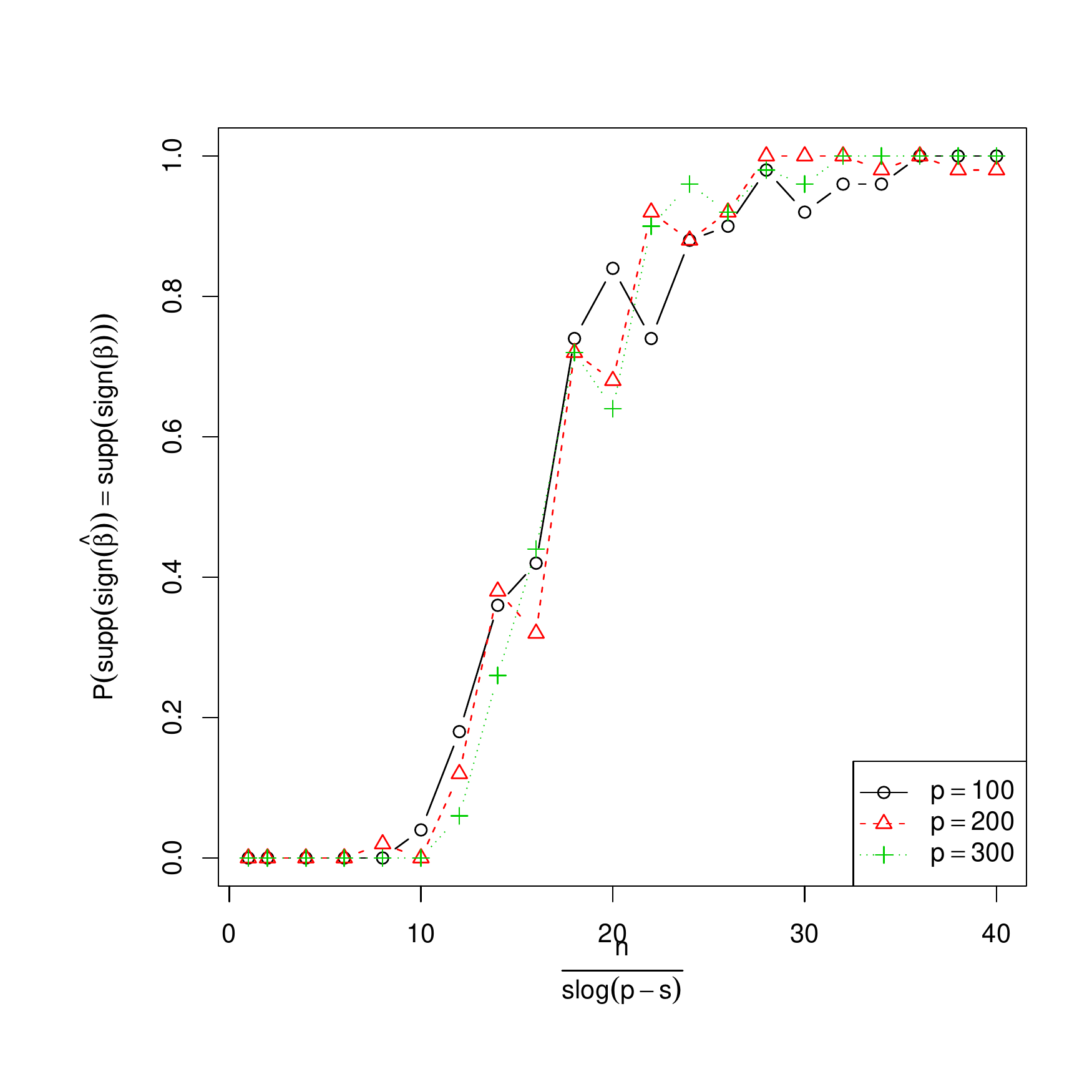}
  \caption{Model (\ref{X3model})}
  \label{fig:sfig2}
\end{subfigure}%

\begin{subfigure}{.5\textwidth}
  \centering
  \includegraphics[width=.8\linewidth]{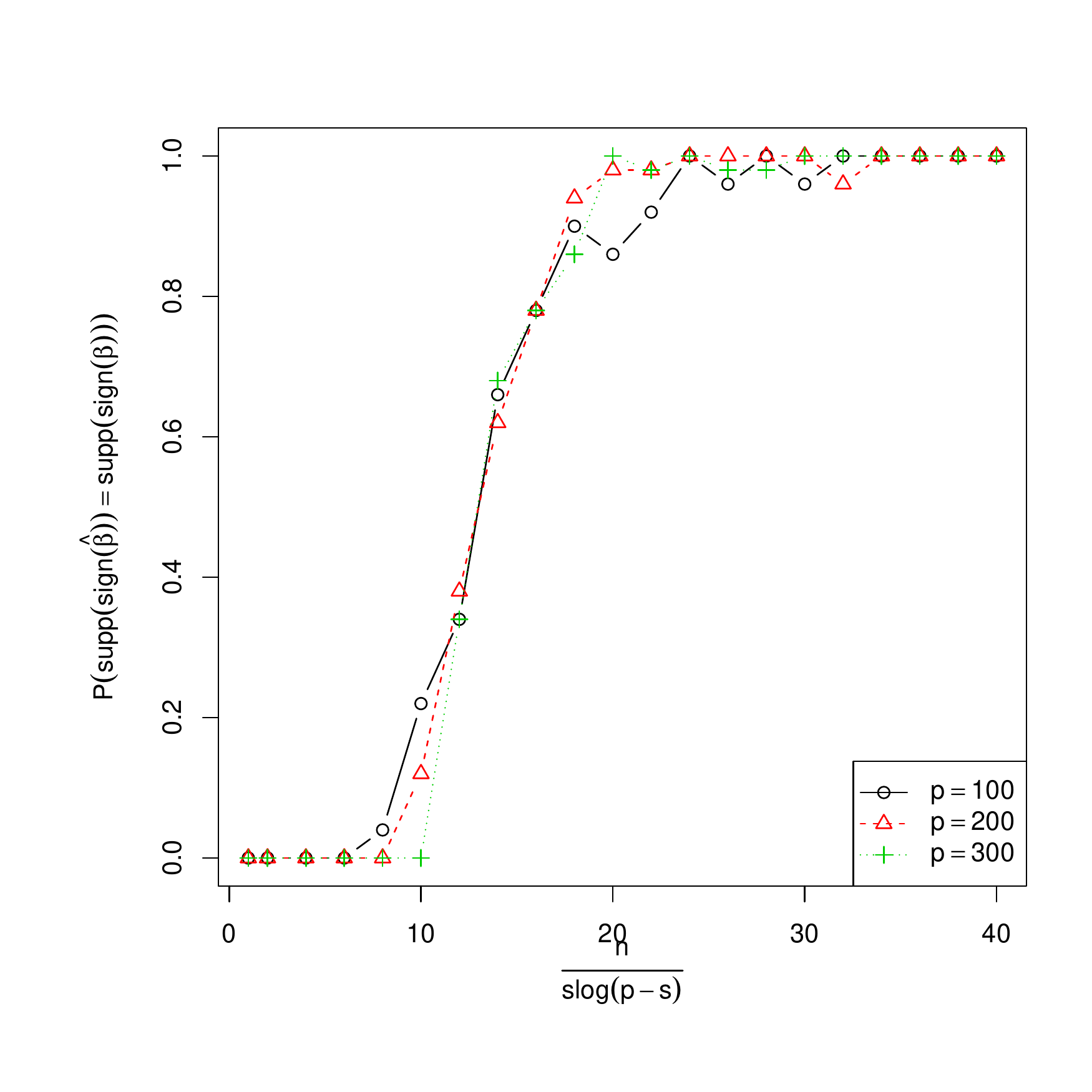}
  \caption{Model (\ref{Xe3model})}
  \label{fig:sfig3}
\end{subfigure}%
\begin{subfigure}{.5\textwidth}
  \centering
  \includegraphics[width=.8\linewidth]{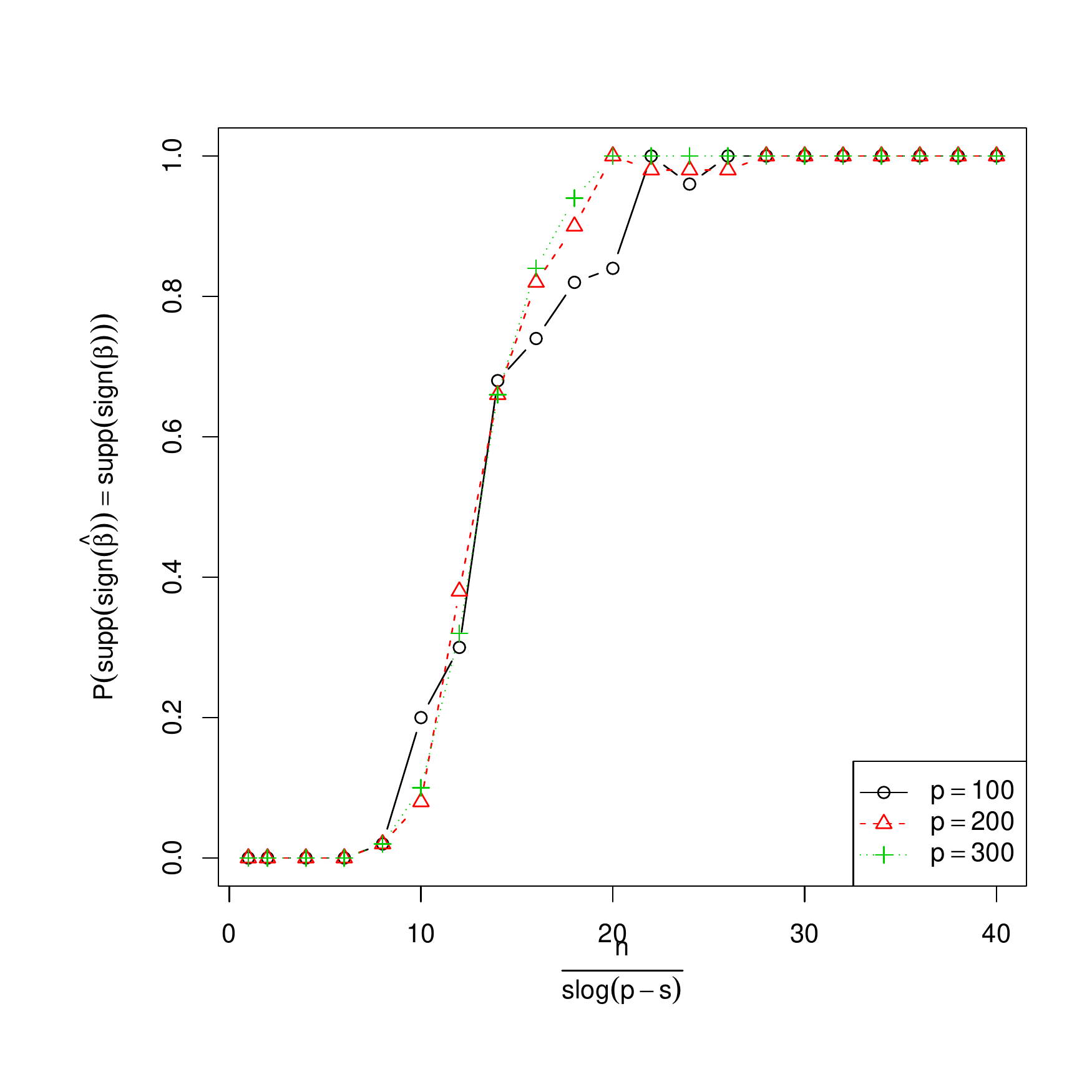}
  \caption{Model (\ref{regmodel})}
  \label{fig:sfig4}
\end{subfigure}%
\end{figure}

Here we have again used $H = 10$ in all scenarios, for simplicity. We observe that phase transitions are occurring in all of the models, and the signed support is being correctly recovered for large enough values of $\Gamma = \frac{n}{s \log(p-s)}$. Plots for the setting $s = \log(p)$ are provided in Figure \ref{log:SDPfigureslog}. An important empirical observation is that the constant $\Omega$ needs to be higher when running the SDP algorithm compared to when running the DT-SIR. This fact coupled with the much slower run-times of the SDP algorithm speak in favor of using the DT-SIR algorithm in practice.

In addition to the above simulations, we also performed numerical studies over the same set of models (\ref{sinmodel}), (\ref{X3model}), (\ref{Xe3model}) and (\ref{regmodel}), where the coefficients of the vector $\bbeta$ were not of equal magnitude. In our second set of simulations we chose $\bbeta = \frac{\widetilde \bbeta}{\|\widetilde \bbeta\|_2}$, where the entries of $\widetilde \bbeta$ were drawn in each simulation as 
\begin{align}\label{random:beta:specification}
\tilde \beta_j = U_j \mbox{ for } j \leq \lfloor s/2 \rfloor, ~~  \tilde \beta_j = -U_j \mbox{ for }s/2 < j \leq s, \mbox{ and }\tilde \beta_j = 0 \mbox{ otherwise,}
\end{align}
 where $U_j$ for $j \in [s]$ are i.i.d. uniform $U(\frac{1}{2},1)$ random variables. The results are similar to the ones reported above, although the efficiency curves required somewhat higher values of the rescaled sample size $\Gamma$, and extra variability can be seen in the curves due to the random choice of $\bbeta$. This is to be expected, since under such a generation scheme, the lowest signal will be of lower magnitude compared to the uniform signal case. We show the results for the case $s = \sqrt{p}$ in Appendix \ref{app:extra:fig} under Figures \ref{sqrt:DT:random:beta} and \ref{sqrt:SDP:random:beta}. Due to space considerations, we do not show results for the case $s = \log p$, as the results are comparable to the results plotted in Figures \ref{log:DT:SIR} and \ref{log:SDPfigureslog}.

\subsection{Real Data Example} \label{real:data:example}

In this sub-section we will illustrate the practical benefits of our algorithm on a real dataset. Our dataset is the mouse embryonic stem cells (ESCs) dataset, and has been previously analyzed by \cite{zhong2012correlation, jiang2014variable}. 

The outcome variable is the expression levels of $12,408$ genes, and is derived using RNA-seq technology in mouse ESCs \citep{cloonan2008stem}. The predictors correspond to $312$ transcription factors (TFs). From the predictors, $12$ TFs are known to be associated with different roles in the ES-cell biology as constituents of important signaling pathways, self renewal regulators, and key reprogramming factors, and the remaining $300$ are putative mouse TFs compiled from the TRANSFAC database. For each gene and TF, the predictor matrix contains a score which is the \textit{transcription factor association strength} score \citep{chen2008integration, ouyang2009chip} for the $12$ known TFs, and motif matching scores for the remaining $300$ genes which were introduced by \cite{zhong2012correlation}. Hence, the design matrix $\mathbb{X} = (\bX_i\T)\T_{i \in [n]}$ is a $n \times p$ matrix, where $n = 12,408$ and $p = 312$, each entry of which represents the score of a TF for the corresponding gene. The EFCs dataset is not truly a high-dimensional dataset in the sense $n < p$, but serves to illustrate that our algorithms can successfully perform variable selection. Since $\mathbb{X}$ is coming from a real dataset with non-homogeneous scores across the predictors, we wouldn't expect it to be centered with identity covariance. To deal with this problem we use the statistics $\widetilde \Vb_{jk}$ in place of $\Vb_{jk}$ where the matrix $\widetilde \Vb$ is defined as:
$$
\widetilde \Vb := \widehat \bSigma^{-1/2} \frac{1}{H} \sum_{h = 1}^H (\overline \bX_h - \overline \bX) (\overline \bX_h - \overline \bX)\T\widehat \bSigma^{-1/2},
$$
and $\widehat \bSigma^{-1/2} := \Big[n^{-1} \sum_{i \in [n]} (\bX_i - \overline \bX)(\bX_i - \overline \bX)\T\Big]^{-1/2}$ is the symmetric square root of the covariance matrix of $\mathbb{X}$. Notice that in this dataset $\widehat \bSigma^{-1/2}$ can be estimated without the application of sparse inverse procedures since $n > p$. As in the simulation section we used $H = 10$ slices. 

We ran the DT and SDP procedures (i.e. Algorithms \ref{DTalgo} and \ref{SDPforSIR}) on the ESCs dataset, and the DT procedure selected $28$ TFs which it found associated with the outcome, while the SDP procedure selected $36$. In Table \ref{ranking:twelve:TFs} we compare the rankings of the $12$ known ES-cell TFs within the TFs selected by DT and SDP algorithms to the same rankings produced by the procedures SIRI-AE \citep{jiang2014variable} and COP \citep{zhong2012correlation}. 

\begin{table}[ht]\caption{Rankings of the $12$ known TFs among the selected TFs by different algorithms}
\centering
\begin{tabular}{rrrrr}
  \hline
 TFs names & DT & SDP & SIRI-AE & COP \\ 
 \hline
E2f1 &   1 &   1 &   1 &   1 \\ 
  Zfx &   2 &   3 &   3 &   3 \\ 
  Mycn &   3 &   2 &   4 &  10 \\ 
  Klf4 &   7 &   4 &   5 &  19 \\ 
  Myc &   4 &   7 &   6 & -- \\ 
  Esrrb &  14 &  10 &   8 & -- \\ 
  Oct4 &  20 &  20 &   9 &  11 \\ 
  Tcfcp2l1 &   8 &   8 &  10 &  36 \\ 
  Nanog &  11 &  17 &  14 & -- \\ 
  Stat3 &  -- &  23 &  17 &  20 \\ 
  Sox2 &  24 & --  &  18 & -- \\ 
  Smad1 &  25 &  25 &  32 &  13 \\ 
   \hline
\end{tabular}
\label{ranking:twelve:TFs}
\end{table}

The top $10$ highly ranked genes of SIRI-AE, DT, SDP and COP contain $8,6,7,3$ known ES-cell TFs respectively. The DT algorithm is the most parsimonious out of all procedures (SIRI-AE and COP have reportedly selected $34$ and $42$ TFs correspondingly), but nevertheless includes all of the known TFs with the exception of the Stat3. The DT algorithm is additionally able to capture both Nanog and Sox2 which are generally believed to be the master ESC regulators. These TFs are completely missed by COP and the Sox2 TF was omitted by SDP. The SIRI-AE algorithm is the only algorithm which is able to recognize all of the $12$ known TFs. We would like to mention that compared to our work the SIRI-AE algorithm is designed to work in a lower dimensional setup, and is able to capture interactions beyond the single-index models, which could help explain its better performance on the ESCs dataset.

All in all, we believe the results of our algorithms are satisfactory, and illustrate that both DT and SDP can be successfully applied in practice. As a remark, in a truly high-dimensional setting when $n < p$, instead of the estimate $\widehat \bSigma^{-1/2}$, in practice one might resort to sparse estimators produced by procedures such as CLIME \citep{cai2011constrained}, and use the corrected statistics $\widetilde \Vb$ in place of $\Vb$.

\section{Proof of Theorem \ref{DTthm}}  \label{secpfthm1}
We will prove this theorem under slightly more general conditions:
\begin{itemize}
\item[i.] Let $\bX^j$, $j\in[p]$ be centered, sub-Gaussian random variables with $\max_{j \in [p]}\|\bX^j\|_{\psi_2} \leq \mathcal{K}$.
\item[ii.] For $j \in \cS_{\bbeta}$, we assume that $\Var[\bm_j(Y)] \geq \frac{C_V}{s}$.
\item[iii.] For $j \in \cS^c_{\bbeta}$, we assume that $\bm_j(Y) = \e[\bX^j|Y] = 0$ a.s.
\end{itemize}
It is easy to see for that for SIM in $\mathcal{F}_{A}$, the above conditions are satisfied in the case when $\bX \sim N_p(0, \mathbb{I}_p)$. We start with a high level outline of the proof. 
As we pointed out, the key argument is to show a deviation inequality for $\Vb^{jj}$, i.e. we will show that $|\Vb^{jj} - \var[\bm_j(Y)]|$ is small (e.g. $<\frac{1}{3}\var[\bm_{j}(Y)]$) with high probability provided that $\Gamma=\frac{n}{s\log(p-s)}$ is large enough. Note that:
\begin{align}
|\Vb^{jj} - \var[\bm_j(Y)]| & = \left| \frac{1}{H}\sum_{h = 1}^H \left(\overline \bX_{h}^j \right)^2 - \int \bm^2_j(y)p_Y(y) dy \right|.\label{varform}
\end{align}
We will show below (see (\ref{forms12})), that the above expression is well approximated by:
\begin{align}
  \left| \frac{1}{H}\sum_{h = 1}^H\left(\overline \bX_{h}^j \right)^2 -  \sum_{h = 1}^H  (\bmu^j_h)^2 \p( Y \in S_h) \right|, \label{varform2}
\end{align}
under sliced stability (where $Y_{(0)} = -\infty$). Intuitively, we have 1)  $\p(Y \in S_h) \approx \frac{1}{H}$ and 2) $\overline \bX^j_{h} \approx \bmu_h^j$, which shows that \eqref{varform2} should be close to 0. To rigorously validate this intuition, we use the Dvoretzky-Kiefer-Wolfowitz (DKW) inequality \citep{massart1990tight} for 1) and concentration inequalities for 2). 

Note that the probability $\p(Y \in S_h)$ is a random variable, where the randomness comes from the two endpoints --- $Y_{(m (h-1) )}$ and $Y_{(m h)}$ of the interval $S_h$. Recall that $F_n$ is the empirical distribution function of $Y$, based on the sample $Y_i$. By the DKW inequality, we have that $\p(\sup_{y} |F_n(y) - F(y)| > \epsilon) \leq 2 \exp(- 2 n \epsilon^2)$, which in addition to the fact that $Y$ has a continuous distribution, implies that for all $h$ we have:
\begin{align} \label{DKWbound}
\frac{1}{H}   - 2 \epsilon \leq \p\left(\frac{h-1}{H} < F_n(Y) < \frac{h}{H} \right) \leq \p\left(\frac{h-1}{H} \leq F_n(Y) \leq \frac{h}{H} \right)  \leq \frac{1}{H}  +  2 \epsilon,
\end{align}
on an event with probability at least $1 - 2 \exp(- 2 n \epsilon^2)$. Denote the event where (\ref{DKWbound}) holds with $E$.

Next, notice that for $h \in [H-2]$ and a random permutation $\pi : [m-1] \mapsto [m-1]$, the collection random variables $\bX^j_{h,\pi_i}, i \in [m-1]$ are conditionally i.i.d. given $(Y_{(m(h-1))}, Y_{(mh)})$, and for $h = H-1$ and a random permutation $\pi : [m] \mapsto [m]$ the random variables $\bX^j_{h,\pi_i}, i \in [m]$ are conditionally i.i.d. given $Y_{(m(H-1))}$, as they can be generated via simple rejection sampling. Hence conditionally on $(Y_{(mh)})_{h \in [H-1]}$ the sample means $\overline \bX^j_{h, 1:(m-1)}$ have corresponding means $\bmu^j_{h}$ when $h \in [H-1]$ and $\overline \bX^j_{H}$ has a mean of $\bmu^j_{H}$. For consistency of notation we will discard a point at random in the $H$\textsuperscript{th} slice, and WLOG (upon re-labeling the points) we assume that the discarded point is $\bX_{H,m}$. Next, we formulate the following concentration result for the sliced means, which we show in Appendix \ref{technical:proofs}: 
\begin{lemma} \label{slicedmeanconc} Let $G \subseteq [p]$. On the event $E$, for $\eta > 0$ we have the following:
\begin{align} \label{maxmeanslicebound}
\p\left(\max_{j \in G, h \in [H]} \left|\overline \bX_{h,1:(m-1)}^j  - \bmu_h^j \right| > \eta \right) \leq 2 |G| H \exp\left(- \frac{\eta^2 (m-1)}{C_1\mathcal{K}^2q^{-1} + C_2\mathcal{K} \eta}\right),
\end{align} 
where $q = \frac{1}{H} - 2\epsilon$, for some absolute constants $C_1, C_2 > 0$.
\end{lemma}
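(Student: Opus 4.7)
The plan is to condition on the slice endpoints under the good event $E$, invoke a Bernstein-type concentration inequality within each slice, and then take a union bound across coordinates and slices.

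First, on $E$ the DKW bound \eqref{DKWbound} forces $\p(Y \in S_h) \geq q := 1/H - 2\epsilon$ for every $h$. Condition on the $H-1$ endpoints $(Y_{(mh)})_{h=1}^{H-1}$ so that the slices $S_h$ and the conditional means $\bmu_h^j$ become deterministic. As stated in the paragraph immediately preceding the lemma, after the random relabeling the $m-1$ samples $\bX_{h,\pi_i}^j$ in slice $h$ are then conditionally i.i.d.\ drawn from the law $\bX^j \mid Y \in S_h$ with common mean $\bmu_h^j$.

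Next, I control the tail of this conditional law. From the unconditional sub-Gaussian assumption,
\[
\p\!\bigl(|\bX^j|>t \,\bigm|\, Y\in S_h\bigr) \;\leq\; \frac{\p(|\bX^j|>t)}{\p(Y\in S_h)} \;\leq\; \frac{2}{q}\exp(-t^2/(2\mathcal{K}^2)),
\]
so the conditional variance is at most $C\mathcal{K}^2/q$, while the Gaussian scale in the tail is still governed by $\mathcal{K}$, with the $q^{-1}$ factor entering only as a prefactor. This is precisely the regime in which Lemma \ref{bernst}, the paper's custom Bernstein-type inequality, applies, with variance proxy $\sim\mathcal{K}^2/q$ and Gaussian scale $\sim\mathcal{K}$. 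Applying it to the $m-1$ conditionally i.i.d.\ samples in slice $h$ yields
\[
\p\!\left(\bigl|\overline\bX_{h,1:(m-1)}^j - \bmu_h^j\bigr| > \eta \,\Big|\, (Y_{(mh')})_{h'}\right) \leq 2\exp\!\left(-\frac{\eta^2(m-1)}{C_1\mathcal{K}^2 q^{-1}+C_2\mathcal{K}\eta}\right).
\]
A union bound over the $|G|H$ pairs $(j,h)$ supplies the prefactor $2|G|H$, and because the right-hand side depends on the endpoints only through the lower bound $q$ guaranteed on $E$, integrating back over the endpoints preserves the inequality on $E$.

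The main obstacle is getting the \emph{second} denominator term to be $C_2\mathcal{K}\eta$ rather than the $C_2(\mathcal{K}/q)\eta$ that a naive approach would produce. Bounding $\|\bX^j \mid Y\in S_h\|_{\psi_1}$ directly loses a factor of $q^{-1}$, partly because the conditional mean $\bmu_h^j$ can be as large as $\mathcal{K}/\sqrt q$, which inflates the centered sub-exponential norm. Lemma \ref{bernst} circumvents this by splitting the MGF into a small-deviation contribution (controlled by the dilated variance $\mathcal{K}^2/q$) and a large-deviation contribution (controlled by the undilated sub-Gaussian tail $\mathcal{K}$), typically via a truncation at scale $\sim\mathcal{K}$ combined with classical Bernstein on the truncated part and a sub-Gaussian tail bound on the remainder.
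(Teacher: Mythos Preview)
Your proposal is correct and follows essentially the same approach as the paper: condition on the slice endpoints so that on $E$ each slice has probability at least $q=1/H-2\epsilon$, invoke Lemma~\ref{bernst} (the paper's tailored Bernstein inequality for sub-Gaussians conditioned on an acceptance event) to get the single-slice bound with the $C_1\mathcal{K}^2q^{-1}+C_2\mathcal{K}\eta$ denominator, and then union bound over $|G|H$ pairs. Your closing paragraph correctly isolates the reason Lemma~\ref{bernst} is needed rather than a naive sub-exponential Bernstein, namely to keep $q^{-1}$ out of the $\eta$-term; the paper's internal mechanism for this is an MGF split at the scale $f(q)=\sqrt{-(\mathcal{K}^2/c)\log(q/e)}$ rather than a truncation at $\mathcal{K}$, but this does not affect your argument for Lemma~\ref{slicedmeanconc} itself.
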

Set $G = [p]$ and denote with $\widetilde E$ the event on which we have 
$$\max_{j \in [p], h \in[H]} \left|\overline \bX_{h,1:(m-1)}^j  - \bmu_h^j \right| \leq \eta.$$
By (\ref{maxmeanslicebound}), (\ref{DKWbound}) and the union bound we have that:
\begin{align*}\p(\widetilde E) & \geq 1 - 2 pH \exp\left(- \frac{\eta^2 (m-1)}{C_1\mathcal{K}^2q^{-1} + C_2\mathcal{K} \eta}\right) - 2 \exp(- 2 n \epsilon^2).
\end{align*}

Next we move on, to show that (\ref{varform}) is close to (\ref{varform2}) on the event $E$, as well as we collect two straightforward inequalities in the following helpful:
 \begin{lemma} \label{simplelemma} Assume that the sliced stability condition (\ref{slicedstab}) holds. Then we have the following inequalities holding on the event $E$, for large enough $H$, and small enough $\epsilon$:
\begin{align}
|\Vb^{jj} - \var[\bm_j(Y)]| & \leq  \left| \frac{1}{H}\sum_{h = 1}^H\left(\overline \bX_{h}^j \right)^2 -  \sum_{h = 1}^H  (\bmu^j_h)^2 \p(Y \in S_h) \right| \label{forms12} \\& + \underbrace{\frac{CH^\kappa}{s} \left(\frac{1}{H} + 2 \epsilon \right)}_{B_1}, \nonumber \\
\sum_{h = 1}^H (\bmu_h^j)^2 & \leq \underbrace{\frac{\frac{C_V}{s} +B_1}{\left(\frac{1}{H} - 2\epsilon\right) }}_{B_2}, \label{expectationsquared}\\
 \sum_{h = 1}^H |\bmu_h^j|  & \leq \underbrace{\frac{\sqrt{\frac{C_V}{s} + B_1}}{ \left(\frac{1}{H} - 2\epsilon\right) }}_{B_3}. \label{expectationabs}
\end{align}
{\bf Note.} We refer to the constants from (\ref{slicedstab}) as $C$ and $\kappa$, dropping the dependence on $K$ and $M$ for brevity, and in fact $C = C(l,K,M)C_V$.
\end{lemma}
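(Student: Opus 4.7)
The plan is to build everything on the law of total variance applied to $\bm_j(Y)$. Since $\bX^j$ is centered, $\e[\bm_j(Y)] = 0$, and $\bmu_h^j = \e[\bm_j(Y) \mid Y \in S_h]$ by tower, so
\[
\var[\bm_j(Y)] = \sum_{h=1}^{H} (\bmu_h^j)^2\, \p(Y \in S_h) \;+\; \sum_{h=1}^{H} \var[\bm_j(Y) \mid Y \in S_h]\, \p(Y \in S_h).
\]
The heart of the argument is bounding the second sum by $B_1$. On the event $E$, each slice probability lies in $[1/H - 2\epsilon,\, 1/H + 2\epsilon]$, which sits inside $[l/H,\, K/H]$ once $H > M$ and $\epsilon$ is small enough; conditioning on the order statistics $Y_{(mh)}$ makes the realized partition deterministic, so I can invoke the sliced stability bound (\ref{slicedstab}). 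Combined with the WLOG identification $\var[\bm_j(Y)] = C_V/s$ and absorbing $C_V$ into $C$ as the note indicates, the second sum is at most $(1/H + 2\epsilon)\cdot C H^{\kappa}/s = B_1$.

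Given this core estimate, (\ref{forms12}) follows from a triangle inequality: I insert $\sum_{h=1}^H (\bmu_h^j)^2 \p(Y \in S_h)$ between $\Vb^{jj} = \frac{1}{H}\sum_{h=1}^H (\overline \bX_h^j)^2$ and $\var[\bm_j(Y)]$; the latter gap is exactly the sum just bounded by $B_1$. For (\ref{expectationsquared}), the decomposition gives $(1/H - 2\epsilon)\sum_{h=1}^{H} (\bmu_h^j)^2 \leq \sum_{h=1}^{H} (\bmu_h^j)^2 \p(Y \in S_h) \leq \var[\bm_j(Y)] = C_V/s$, and hence $\sum_{h} (\bmu_h^j)^2 \leq (C_V/s)/(1/H - 2\epsilon) \leq B_2$; the extra $B_1$ in the numerator of $B_2$ is retained as slack to match the stated form.

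The final inequality (\ref{expectationabs}) will come from one Cauchy-Schwarz step after factoring out the slice probability via its uniform lower bound:
\[
\sum_{h=1}^{H} |\bmu_h^j| \leq \frac{1}{1/H - 2\epsilon} \Bigl(\textstyle\sum_h (\bmu_h^j)^2 \p(Y \in S_h)\Bigr)^{1/2} \Bigl(\textstyle\sum_h \p(Y \in S_h)\Bigr)^{1/2} \leq B_3,
\]
using $\sum_h \p(Y \in S_h) = 1$ together with the bound on the inner sum from the previous step. The one subtle point worth highlighting is the legitimacy of applying sliced stability to the \emph{random} slices $S_h$; this is precisely why I work conditionally on the order statistics and restrict to $E$, which renders the partition deterministic and forces it into $\Ar_H(l,K)$, making (\ref{slicedstab}) applicable pointwise. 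Everything beyond this is straightforward algebra.
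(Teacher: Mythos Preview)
Your proposal is correct and mirrors the paper's own proof almost exactly: the paper likewise uses the law of total variance identity $\var[\bm_j(Y)] - \sum_h (\bmu_h^j)^2\p(Y\in S_h)=\sum_h \var[\bm_j(Y)\mid Y\in S_h]\p(Y\in S_h)$, bounds the right side by $B_1$ via sliced stability and the DKW bound on $\p(Y\in S_h)$, and then derives (\ref{expectationsquared}) and (\ref{expectationabs}) by the same lower bound on the slice probabilities and the same Cauchy--Schwarz step. Your remark that the extra $B_1$ in the numerator of $B_2$ is slack (since $\sum_h(\bmu_h^j)^2\p(Y\in S_h)\leq \var[\bm_j(Y)]$ directly) is a valid sharpening, and your explicit justification for conditioning on the order statistics to make the partition deterministic and lie in $\Ar_H(l,K)$ is a nice point that the paper leaves implicit.
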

Note that by an elementary calculation --- using (\ref{DKWbound}) and Lemma \ref{simplelemma}, on the event $\widetilde{E}$ we get:
\begin{align} \label{bigbound}
|\Vb^{jj} - \var[\bm_j(Y)]|  & \leq \frac{1}{H}\sum_{h = 1}^H \left|\left(\overline \bX_{h}^j \right)^2 - \frac{(m-1)^2}{m^2} (\bmu_h^j)^2 \right|  \\
& + \underbrace{\left(2\epsilon + \frac{1}{H} - \frac{(m-1)^2}{H m^2}\right)B_2}_{I_1} + \underbrace{B_1}_{I_2},
\end{align}
where we used (\ref{forms12}), the triangle inequality and (\ref{expectationsquared}). Consider the following: 
\begin{lemma}\label{calculation} There exists a subset $\widetilde{\widetilde{E\, }} \subset \widetilde{E}$ such that $ \p( {\widetilde{E}\setminus \widetilde{\widetilde{E\, }} }) \leq p\exp(-c \frac{\tau^2n}{4\mathcal{K}^4})$, for some fixed $\tau \in [0, 2 \mathcal{K}^2)$ on which we have the following bound for all $j \in [p]$:
\begin{align}
\frac{1}{H}\sum_{h = 1}^H \left|\left(\overline \bX_{h}^j \right)^2 - \frac{(m-1)^2}{m^2} (\bmu_h^j)^2 \right| & \leq \label{boundlemmaxmean} \underbrace{\frac{(2 \mathcal{K}^2 + \tau)}{m}}_{I_3} + \underbrace{\frac{2\sqrt{2 \mathcal{K}^2 + \tau}}{\sqrt{m}}\sqrt{ 2\eta^2 + 2\frac{B_2}{H}}}_{I_4} \\
& +  \underbrace{\eta^2}_{I_5} +  \underbrace{2\eta\frac{B_3}{H}}_{I_6}\nonumber.
\end{align}
\end{lemma}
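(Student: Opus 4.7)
The plan is to split each $\overline \bX_h^j$ into the average of its first $m-1$ observations and the one leftover observation, so that the former can be controlled by the event $\widetilde E$ (which only governs the first $m-1$ points in each slice) while the latter is handled by applying a sub-exponential concentration to the entire sample $\{(\bX_i^j)^2\}_{i \in [n]}$. Writing $\overline \bX_h^j = \tfrac{m-1}{m}\overline \bX_{h,1:(m-1)}^j + \tfrac{1}{m}\bX_{h,m}^j$ and squaring gives the decomposition
\begin{align*}
(\overline \bX_h^j)^2 - \tfrac{(m-1)^2}{m^2}(\bmu_h^j)^2
&= \tfrac{(m-1)^2}{m^2}\bigl[(\overline \bX_{h,1:(m-1)}^j)^2 - (\bmu_h^j)^2\bigr] \\
&\quad + \tfrac{2(m-1)}{m^2}\overline \bX_{h,1:(m-1)}^j \bX_{h,m}^j + \tfrac{1}{m^2}(\bX_{h,m}^j)^2;
\end{align*}
passing to absolute values, summing over $h$, and dividing by $H$ yields three pieces $T_1, T_2, T_3$ that I will match to $I_5 + I_6$, $I_4$, and $I_3$ respectively.

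For $T_1$, use the algebraic identity $(\overline \bX_{h,1:(m-1)}^j)^2 - (\bmu_h^j)^2 = (\overline \bX_{h,1:(m-1)}^j - \bmu_h^j)^2 + 2\bmu_h^j(\overline \bX_{h,1:(m-1)}^j - \bmu_h^j)$. On $\widetilde E$ each difference is bounded by $\eta$, so the sum over $h$ is at most $H\eta^2 + 2\eta\sum_h |\bmu_h^j|$, which in view of (\ref{expectationabs}) gives $T_1 \leq \eta^2 + 2\eta B_3/H = I_5 + I_6$ after absorbing $(m-1)^2/m^2 \leq 1$.

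For $T_3$, I define
$$\widetilde{\widetilde{E\,}} := \widetilde E \cap \Bigl\{\max_{j \in [p]} \tfrac{1}{n}\sum_{i=1}^n (\bX_i^j)^2 \leq 2 \mathcal K^2 + \tau\Bigr\}.$$
Since $(\bX^j)^2$ is sub-exponential with $\|(\bX^j)^2\|_{\psi_1} \lesssim \mathcal K^2$ and mean $\lesssim \mathcal K^2$, Bernstein's inequality (in its sub-Gaussian regime, which the hypothesis $\tau < 2\mathcal K^2$ is precisely designed to enforce) combined with a union bound over the $p$ coordinates yields $\p(\widetilde E \setminus \widetilde{\widetilde{E\,}}) \leq p \exp(-c\tau^2 n/(4\mathcal K^4))$. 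On $\widetilde{\widetilde{E\,}}$, the inclusion $\{\bX_{h,m}\}_{h \in [H]} \subset \{\bX_i\}_{i \in [n]}$ implies $\sum_h (\bX_{h,m}^j)^2 \leq n(2\mathcal K^2 + \tau) = mH(2\mathcal K^2 + \tau)$, so $T_3 \leq (2\mathcal K^2 + \tau)/m = I_3$. Finally, $T_2$ is dispatched by Cauchy--Schwarz with $u_h = \overline \bX_{h,1:(m-1)}^j$ and $v_h = \bX_{h,m}^j$: on $\widetilde E$ the bound $u_h^2 \leq 2(\bmu_h^j)^2 + 2\eta^2$ combined with (\ref{expectationsquared}) yields $\sum u_h^2 \leq 2B_2 + 2H\eta^2$, while the $T_3$ estimate gives $\sum v_h^2 \leq mH(2\mathcal K^2 + \tau)$; multiplying the square roots and using $(m-1)/m \leq 1$ delivers $T_2 \leq \tfrac{2\sqrt{2\mathcal K^2 + \tau}}{\sqrt m}\sqrt{2\eta^2 + 2B_2/H} = I_4$.

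The main obstacle is not conceptual but organizational --- routing the three pieces of the decomposition so that the constants and the factors of $(m-1)/m$ precisely line up with the four target terms $I_3,I_4,I_5,I_6$. The only ingredient that is not pure algebra is the Bernstein step for $T_3$, and the restriction $\tau \in [0, 2\mathcal K^2)$ is exactly what keeps the sub-exponential concentration inside its sub-Gaussian tail regime, producing the exponent $c\tau^2 n/(4\mathcal K^4)$ rather than a weaker linear-in-$\tau$ exponent.
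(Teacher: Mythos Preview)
Your proof is correct and follows essentially the same route as the paper: the same split $\overline \bX_h^j = \tfrac{m-1}{m}\overline \bX_{h,1:(m-1)}^j + \tfrac{1}{m}\bX_{h,m}^j$, the same three-term decomposition after squaring, the same use of $\widetilde E$ together with (\ref{expectationabs}) and (\ref{expectationsquared}) for the $\bmu_h^j$-sums, and the same sub-exponential Bernstein bound (with $\tau \leq 2\mathcal K^2$ to stay in the sub-Gaussian regime) plus union bound to define $\widetilde{\widetilde{E\,}}$. The only cosmetic difference is that the paper bounds $|\overline \bX_{h,1:(m-1)}^j| \leq \eta + |\bmu_h^j|$ \emph{before} applying Cauchy--Schwarz to the cross term, whereas you apply Cauchy--Schwarz first and then bound $u_h^2 \leq 2\eta^2 + 2(\bmu_h^j)^2$; the resulting constants are identical.
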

We defer the proof of this lemma to the appendix. Next, we provide exact constants, such that $|\Vb^{jj} - \var[\bm_j(Y)]| \leq \frac{C_V}{\rho s}$ for some constant $\rho > 0$ and all $j \in [p]$, so that the probability of the event $\widetilde{\widetilde{E\, }}$ still converges to 1. The remarkable phenomenon here is that the number of slices $H$, can be selected so that it is a constant, which might seem counterintuitive. Select the constants in the following manner:
\begin{align}
H & = \max\left\{M, \left(\frac{\gamma CK}{C_V}\right)^{\frac{1}{1 - \kappa}}\right\}, \label{Hconst} \\
\epsilon & = \min \left\{ \frac{K-1}{2 H}, \frac{1-l}{2H}, \frac{l}{4(\gamma + 1) H}\right\}, \label{epsconst}\\
\eta & = \frac{l \sqrt{C_V}}{2 \sqrt{\gamma (\gamma + 1)}\sqrt{s}}, \label{etaconst} \\
m & \geq \max\left\{\frac{8 \gamma(\gamma + 1 + l)(2 \mathcal{K}^2 + \tau) s}{l C_V}, \frac{4(\gamma + 1)}{l}, \frac{2s \log(p-s)}{\Upsilon} + 1, \frac{8 \mathcal{K}^4 \log(p-s)}{\tau^2 c H}\right\}\label{mconst}
\end{align}
where $\gamma \geq 12$ is a positive constant and $\Upsilon = \frac{l^2 C_V}{4(\gamma(\gamma + 1)) \left(\mathcal{K}^2C_1 H l^{-1} + \frac{C_2 \mathcal{K}l \sqrt{C_V}}{2\sqrt{\gamma(\gamma + 1)}}\right)}$. Simple algebra shows that selecting these constants ensures the following inequality:
$$\max(I_1, I_2, I_3, I_4, I_5, I_6) \leq \frac{C_V}{\gamma s}.$$
By combining (\ref{bigbound}) and (\ref{boundlemmaxmean}) we arrive at:
\begin{align}
|\Vb^{jj} - \var[\bm_j(Y)]| \leq \frac{6 C_V}{\gamma s}, \label{varianceineq}
\end{align}
as promised for $\rho = \frac{\gamma}{6}$. Next we proceed to show that the probability of the event $\widetilde{\widetilde{E\, }}$ converges to $0$. To achieve this, note that in (\ref{epsconst}) we chose $\epsilon$ to be a constant, so evidently $\exp(-n\epsilon^2) \rightarrow 0$. Moreover $p \exp(-c\frac{\tau^2 mH}{4 \mathcal{K}^4}) \leq \frac{p}{(p-s)^2} \rightarrow 0$ by (\ref{mconst}). Finally to show that $\p(\widetilde{\widetilde{E\, }}) \rightarrow 1$, we need:
$$
\frac{\eta^2 (m-1)}{C_1 (H^{-1} - 2 \epsilon)^{-1}\mathcal{K}^2 + C_2 \mathcal{K}\eta} - \log(p) - \log(H) \rightarrow +\infty.
$$
Noting that $H^{-1} - 2\epsilon \geq l H^{-1}$, $\eta \leq \frac{l \sqrt{C_V}}{2 \sqrt{\gamma(\gamma + 1)}}$ and that by (\ref{Hconst}) $H$ is fixed, the above expression is implied by:
$$
\Upsilon \frac{(m-1)}{s} - \log(p) \geq 2 \log(p-s) -  \log(p) \rightarrow +\infty,
$$
where we used (\ref{mconst}) and the fact that $2\log(p-s) \geq \log(p)$ asymptotically (since $s = O(p^{1-\delta})$ for $\delta > 0$). Hence we have guaranteed that  $\p(\widetilde{\widetilde{E\, }}) \rightarrow 1$. 

Now, note that for variables $j \not \in \cS_{\bbeta}$ we have $\bm_j(Y) = \e[\bX^j] = 0$, which implies that for all $h \in [H], j \in \cS_{\bbeta}^c$ we have $\bmu^j_h = 0$. Using (\ref{varianceineq}) and selecting $\gamma = 18$, we conclude that:
$$
\Vb^{jj} \geq \frac{2}{3} \frac{C_V}{s}, j \in \cS_{\bbeta} \mbox{   and   } \Vb^{jj} \leq \frac{C_V}{3 s}, j \in \cS^c_{\bbeta},
$$
and hence separation of the signals is asymptotically possible, as we claimed.

\section{Proof of Theorem \ref{SDPthm}} \label{secpfthm2}

In this section we show that the SDP relaxation will have a rank $1$ solution with high probability and moreover this solution will recover the signed support of the vector $\bbeta$. In contrast to Section \ref{secpfthm1}, here we make full usage of the fact that $\bX \sim N_p(0, \mathbb{I}_p)$. One simple implication of this, is for example the fact that $\max_j \|\bX^j\|_{\psi_2} < 1$, and hence we can set $\mathcal{K} = 1$. For the analysis of the algorithm we set the regularization parameter $\lambda_n = \frac{C_V}{2s}$.

To this end, we restate Lemma 5 from \cite{amini2008high}, which provides a sufficient condition for a global solution of the SDP problem:
\begin{lemma}  \label{SDPlemma} Suppose there exists a matrix $\Ub$ satisfying:
\begin{align}
\Ub_{ij} = \begin{cases} \sign(\widehat \bz_i) \sign(\widehat \bz_j), &\mbox{ if } \widehat \bz_i \widehat \bz_j \neq 0;\\
\in [-1,1], &\mbox{ otherwise}.
\end{cases}
\end{align}
Then if $\widehat \bz$ is the principal eigenvector of the matrix $\Ab - \lambda_n \Ub$, $\widehat \bz \widehat \bz\T$ is the optimal solution to problem (\ref{SDPform}).
\end{lemma}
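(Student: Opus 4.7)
The plan is a direct KKT argument for the convex program (\ref{SDPform}) over $\{\Zb \succeq 0, \tr(\Zb) = 1\}$. Slater's condition holds (any strictly positive-definite trace-one matrix is interior-feasible), so first-order KKT conditions are both necessary and sufficient for global optimality. Decomposing the objective into the linear term $\tr(\Ab \Zb)$ and the entrywise $\ell_1$ penalty $\|\Zb\|_1 := \sum_{i,j}|\Zb_{ij}|$, and viewing the latter as a sum of $p^2$ scalar absolute-value functions, standard convex analysis shows that $\partial \|\Zb\|_1$ at $\Zb^\ast := \widehat \bz \widehat \bz\T$ is exactly the collection of matrices $\Ub$ satisfying $\Ub_{ij} = \sign(\widehat \bz_i \widehat \bz_j) = \sign(\widehat \bz_i)\sign(\widehat \bz_j)$ on the support and $\Ub_{ij} \in [-1,1]$ off-support, which is precisely the object whose existence is hypothesized in the lemma.

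Next I would introduce a scalar multiplier $\nu \in \mathbb{R}$ for the trace constraint $\tr \Zb = 1$ and a PSD multiplier $\bLambda \succeq 0$ for the cone constraint $\Zb \succeq 0$, then set $\nu^\ast := \lambda_{\max}(\Ab - \lambda_n \Ub)$ and $\bLambda^\ast := \nu^\ast \mathbb{I}_p - (\Ab - \lambda_n \Ub)$. By construction $\bLambda^\ast \succeq 0$, and the stationarity condition $\Ab - \lambda_n \Ub - \nu^\ast \mathbb{I}_p + \bLambda^\ast = 0$ holds identically. After normalizing $\widehat \bz$ to unit length, primal feasibility of $\Zb^\ast$ (rank-one PSD with $\tr \Zb^\ast = \|\widehat \bz\|_2^2 = 1$) is immediate. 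The hypothesis that $\widehat \bz$ is the principal eigenvector of $\Ab - \lambda_n \Ub$ with eigenvalue $\nu^\ast$ gives $\bLambda^\ast \widehat \bz = 0$, whence complementary slackness $\tr(\bLambda^\ast \Zb^\ast) = \tr(\bLambda^\ast \widehat \bz \widehat \bz\T) = 0$ follows directly.

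With stationarity, primal feasibility, dual feasibility ($\bLambda^\ast \succeq 0$ and $\Ub \in \partial \|\Zb^\ast\|_1$), and complementary slackness all verified, convexity of (\ref{SDPform}) combined with Slater's condition certifies $\Zb^\ast = \widehat \bz \widehat \bz\T$ as a global maximizer, which is exactly the claim. I do not anticipate a genuine obstacle — this is essentially a restatement of Amini and Wainwright's Lemma 5 and the argument is routine convex duality. The one point warranting mild care is the entrywise description of the subgradient of $\|\cdot\|_1$ at a rank-one matrix in the first paragraph, which is handled by treating the penalty as a sum of $p^2$ independent scalar absolute-value terms so that the subdifferential decomposes coordinatewise and aligns with the hypothesized sign pattern on the support of $\widehat \bz$.
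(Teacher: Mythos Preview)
Your KKT argument is correct and is the standard route: Slater holds, the subdifferential of the entrywise $\ell_1$ penalty at $\widehat\bz\widehat\bz\T$ is exactly the set of sign matrices $\Ub$ described in the hypothesis, and taking $\nu^\ast=\lambda_{\max}(\Ab-\lambda_n\Ub)$, $\bLambda^\ast=\nu^\ast\mathbb{I}_p-(\Ab-\lambda_n\Ub)$ verifies stationarity, dual feasibility, and complementary slackness. Note, however, that the paper does not actually supply a proof of this lemma --- it simply restates Lemma~5 of \cite{amini2008high} and defers the argument there --- so there is no in-paper proof to compare against; your write-up is essentially the Amini--Wainwright argument spelled out.
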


Recall that the SIR estimate of the variance-covariance matrix has entries:
\begin{align*}
\Vb^{jk} = \frac{1}{H}\sum_{h = 1}^H \overline \bX^j_{h} \overline \bX^k_{h}.
\end{align*}
Denote with $\widetilde \Vb = \Vb - \lambda_n \Ub$, where $\Ub$ is a to be defined sign matrix from Lemma \ref{SDPlemma}. We furthermore consider the decomposition of $\widetilde \Vb$ into blocks --- $\widetilde \Vb_{\cS_{\bbeta}, \cS_{\bbeta}}$,  $\widetilde \Vb_{\cS_{\bbeta}^c, \cS_{\bbeta}}$, $\widetilde \Vb_{\cS_{\bbeta}^c,\cS_{\bbeta}^c}$. Here, these three matrices are sub-matrices of the matrix $\widetilde \Vb$ restricted to entries with indexes in the sets $\cS_{\bbeta}$ or $\cS_{\bbeta}^c$ correspondingly. We observe that $\Ub_{\cS_{\bbeta}, \cS_{\bbeta}} = \sign(\bbeta_{\cS_{\bbeta}})\sign(\bbeta_{\cS_{\bbeta}})\T$. 

We first focus on the $\Vb_{\cS_{\bbeta}, \cS_{\bbeta}}$ matrix. We calculate the value of the covariance of two coordinates $j,k \in \cS_{\bbeta}$:
\begin{align} \label{suppcov}
\cov[\bm_j(Y), \bm_k(Y)] & = \e[\bm_j(Y) \bm_k(Y)] = \operatorname{sign}(\bbeta_j) \operatorname{sign}(\bbeta_k) \e[\bm^2_k(Y)] \\
& = \bbeta_j \bbeta_k C_V,\nonumber 
\end{align}
where we used that $\operatorname{sign}(\bbeta_j) \bm_j(Y) =  \operatorname{sign}(\bbeta_k)\bm_k(Y)$, which follows by noticing that the distribution of $\bX^j | Y$ is the same as the distribution of $\bX^k | Y$ except the potential difference in the signs of the coefficients, because of the symmetry in the problem. 

Next, let $G = \cS_{\bbeta}$ (hence $|G| = s$) in Lemma \ref{slicedmeanconc} to obtain that:
\begin{align} \label{boundonSbeta}
\max_{j \in \cS_{\bbeta}, h \in[H]} \left|\overline \bX_{h,1:(m-1)}^j  - \bmu_h^j \right| \leq \eta,
\end{align}
with probability at least $1 -  2 sH \exp\left(- \frac{\eta^2 (m-1)}{C_1 q^{-1} + C_2 \eta}\right) - 2 \exp(-2n\epsilon^2)$. Let $\widetilde E_{\cS_{\bbeta}} \subset E$, be the event where (\ref{boundonSbeta}) holds. We proceed with formulating a bound similar to Lemma \ref{calculation}, but for the covariance:
\begin{lemma} \label{covbound} There exists an event $\widetilde{\widetilde{E\, }}_{\cS_{\bbeta}}$ with $ \p(\widetilde E_{\cS_{\bbeta}} \setminus \widetilde{\widetilde{E\, }}_{\cS_{\bbeta}}) \leq s \exp(-c \frac{\tau^2n}{4})$ ($c > 0$ is an absolute constant), for some fixed $\tau \in (0, 2]$, such that for all $j,k\in \cS_{\bbeta}$, we have the following inequality:
\begin{align} \label{covboundineq}
 \left|\Vb^{jk} - \cov(\bm_j(Y), \bm_k(Y)) \right|& \leq \left(2\epsilon + \frac{1}{H} - \frac{(m-1)^2}{H m^2}\right)B_2 + B_1+   4\eta\frac{B_3}{H}\nonumber\\
& +  \frac{4(2 + \tau)}{m} +  \frac{4\sqrt{2 + \tau}}{\sqrt{m}}\sqrt{ 2\eta^2 + 2\frac{B_2}{H}} +  4\eta^2.
\end{align}
\end{lemma}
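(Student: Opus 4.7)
The plan is to emulate the combined argument of Lemmas \ref{simplelemma} and \ref{calculation}, but with the square $(\overline\bX_h^j)^2$ replaced throughout by the mixed product $\overline\bX_h^j\overline\bX_h^k$. I would begin with the telescoping split
\[ \Vb^{jk} - \cov(\bm_j(Y),\bm_k(Y)) = \Bigl[\Vb^{jk} - \tfrac{(m-1)^2}{Hm^2}\sum_{h=1}^H \bmu_h^j\bmu_h^k\Bigr] + \Bigl[\tfrac{(m-1)^2}{Hm^2}\sum_{h=1}^H \bmu_h^j\bmu_h^k - \cov(\bm_j(Y),\bm_k(Y))\Bigr], \]
handling the (mostly) deterministic second bracket via sliced stability and the first bracket via concentration.

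For the deterministic bracket I would invoke the tower property, $\cov(\bm_j,\bm_k) = \sum_h[\cov(\bm_j,\bm_k|Y\in S_h) + \bmu_h^j\bmu_h^k]\p(Y\in S_h)$, and control the conditional-covariance residual by AM--GM, $|\cov(\bm_j,\bm_k|S_h)|\le\tfrac12[\var(\bm_j|S_h)+\var(\bm_k|S_h)]$. Summing and applying sliced stability (\ref{slicedstab}) separately to each coordinate, together with the DKW bound (\ref{DKWbound}) on $\p(Y\in S_h)$, produces the $B_1$ summand exactly as in (\ref{forms12}). The mismatch between $\p(Y\in S_h)$ and $\tfrac{(m-1)^2}{Hm^2}$ is absorbed via Cauchy--Schwarz, $\big|\sum_h\bmu_h^j\bmu_h^k\big|\le\sqrt{\sum(\bmu_h^j)^2\sum(\bmu_h^k)^2}\le B_2$ from (\ref{expectationsquared}), which contributes the $(2\epsilon+\tfrac{1}{H}-\tfrac{(m-1)^2}{Hm^2})B_2$ summand.

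For the stochastic bracket I would write $\overline\bX_h^j = \tfrac{m-1}{m}\overline\bX_{h,1:m-1}^j + \tfrac{1}{m}\bX_{h,m}^j$ and expand $\overline\bX_h^j\overline\bX_h^k - \tfrac{(m-1)^2}{m^2}\bmu_h^j\bmu_h^k$ into (a) linear cross terms of the form $\bmu_h^\star(\overline\bX_{h,1:m-1}^\bullet-\bmu_h^\bullet)$, (b) a pure product of deviations, and (c) corrections involving the boundary points $\bX_{h,m}^\star$. On $\widetilde E_{\cS_{\bbeta}}$ each deviation is at most $\eta$, so the cross terms (a) contribute at most $\eta(|\bmu_h^j|+|\bmu_h^k|)$ per slice, hence at most $4\eta B_3/H$ after averaging, by (\ref{expectationabs}) applied to both $j$ and $k$. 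For (b) and (c) I would apply the polarization identity
\[ \overline\bX_h^j\overline\bX_h^k = \tfrac14\bigl[(\overline\bX_h^j+\overline\bX_h^k)^2 - (\overline\bX_h^j-\overline\bX_h^k)^2\bigr], \]
which reduces the bound to two applications of Lemma \ref{calculation} for the centered sub-Gaussian variables $\bX^j\pm\bX^k$ (each of $\psi_2$-norm at most $2\mathcal{K}=2$, with slice means $\bmu_h^j\pm\bmu_h^k$). The factors of $4$ appearing next to $(2+\tau)/m$, $\sqrt{2+\tau}/\sqrt{m}$ and $\eta^2$ in (\ref{covboundineq}) are exactly what this polarization contributes relative to the corresponding terms in (\ref{boundlemmaxmean}).

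The main obstacle is controlling the union-bound cost so that the failure probability collapses to the advertised $s\exp(-c\tau^2 n/4)$. A naive polarization plus union bound over pairs $(j,k)\in\cS_{\bbeta}\times\cS_{\bbeta}$ would produce an $s^2$ prefactor, strictly weaker than what is stated. I expect the fix to either (i) invoke a single Hanson--Wright-type concentration bound for $\max_{j,k\in\cS_{\bbeta}}\big|\tfrac{1}{Hm}\sum_{h,i}\bX_{h,i}^j\bX_{h,i}^k\big|$, exploiting that under $\bX\sim N_p(0,\mathbb{I}_p)$ the off-diagonal products have mean zero, or (ii) reuse the event $\widetilde{\widetilde{E\, }}$ already constructed in the proof of Theorem \ref{DTthm} for the diagonal entries and separately union-bound only the off-diagonal contributions, collapsing $s^2$ to $s$ and delivering the stated probability.
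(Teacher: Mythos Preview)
Your approach is sound and would yield a bound of the required order, but it differs from the paper's proof, and the exact constants in (\ref{covboundineq}) will not fall out of polarization as cleanly as you suggest.

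The paper exploits the SIM symmetry (\ref{suppcov}) in a more direct way. Since $\sign(\bbeta_j)\bm_j(Y) = \sign(\bbeta_k)\bm_k(Y)$ for all $j,k\in\cS_{\bbeta}$, one has $\cov(\bm_j,\bm_k) = \sign(\bbeta_j)\sign(\bbeta_k)\var(\bm_j)$ and $\sign(\bbeta_j)\bmu_h^j = \sign(\bbeta_k)\bmu_h^k$. The paper therefore writes
\[
\bigl|\Vb^{jk} - \cov(\bm_j,\bm_k)\bigr| \;\le\; \Bigl|\tfrac{1}{H}\sum_h(\overline\bX_h^j)^2 - \var(\bm_j)\Bigr| \;+\; \tfrac{1}{H}\sum_h|\overline\bX_h^j|\,\bigl|\sign(\bbeta_j)\overline\bX_h^j - \sign(\bbeta_k)\overline\bX_h^k\bigr|.
\]
The first term is exactly the diagonal estimate already controlled by Lemmas \ref{simplelemma} and \ref{calculation}. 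In the second term the slice means cancel identically, so after inserting $\overline\bX_h^\star = \tfrac{m-1}{m}\overline\bX_{h,1:m-1}^\star + \tfrac{1}{m}\bX_{h,m}^\star$ one is left with quantities bounded by $\eta$ and the boundary points $|\bX_{h,m}^\star|$; bounding $\sum_h(\bX_{h,m}^\star)^2 \le \sum_r(\bX_r^\star)^2$ and using Cauchy--Schwarz reproduces the remaining terms in (\ref{covboundineq}). This route avoids the AM--GM step on $\cov(\bm_j,\bm_k\mid S_h)$ altogether, since by symmetry the conditional covariance equals $\pm\var(\bm_j\mid S_h)$ on the nose.

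Your option (ii) for the probability bound is precisely what the paper does: the event is $\widetilde{\widetilde{E\,}}_{\cS_{\bbeta}} = \widetilde E_{\cS_{\bbeta}} \cap \bigl\{\sup_{j\in\cS_{\bbeta}} n^{-1}\sum_r(\bX_r^j)^2 \le 2+\tau\bigr\}$, which is a union over $s$ coordinates only, giving $s\exp(-c\tau^2 n/4)$ directly. Your polarization route would work against the same event via $(\bX_r^j\pm\bX_r^k)^2 \le 2(\bX_r^j)^2 + 2(\bX_r^k)^2$, so option (i) is unnecessary. One caution: in your sketch you first extract the linear cross terms (a) and then apply polarization to the remainder, but polarization applied to the full product already contains those linear terms, so you would be double-counting; either expand directly or polarize from the start, not both. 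The paper's symmetry-based reduction is tighter and delivers the stated constants exactly, whereas polarization with $\mathcal{K}\to 2\mathcal{K}$ (or with the $4(2+\tau)$ surrogate bound on the sum of squares) gives the same six-term structure but with slightly different numerical factors.
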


Let $H', \epsilon', \eta'$ are constants selected according to (\ref{Hconst}) ---  (\ref{etaconst}), with $\mathcal{K} = 1$, correspondingly. Set $H = H', \epsilon = \epsilon', \eta = \frac{\eta'}{2}$, and take
\begin{align}
m & \geq \max\left\{32\frac{8 \gamma(\gamma + 1 + l)(2 + \tau) s}{l C_V}, \frac{8s \log(p - s)}{\Upsilon} + 1, \frac{8 \log(p - s)}{\tau^2 c H}\right\}\label{mnewboundSDP},
\end{align}
where $\Upsilon$ is specified as before with $\mathcal{K} = 1$.
Similarly to Section \ref{secpfthm1}, we can show that the following inequality: 
\begin{align} \label{errorbound}
\sup_{j,k \in \cS_{\bbeta}} \left|\Vb^{jk} - \cov(\bm_j(Y), \bm_k(Y)) \right| \leq \frac{6 C_V}{\gamma s},
\end{align}
holds on $\widetilde{\widetilde{E\, }}_{\cS_{\bbeta}} $, with the probability of $\widetilde{\widetilde{E\, }}_{\cS_{\bbeta}}$ tending to 1. To get the bound in (\ref{errorbound}), one can observe that with the choices of constants as above all $6$ terms in  (\ref{covboundineq}) are guaranteed to be smaller than $\frac{C_V}{\gamma s}$. Here similarly to Section \ref{secpfthm1}, $H$ is large enough but fixed. Having in mind the above inequality we consider the matrix $\widetilde \Vb_{\cS_{\bbeta},\cS_{\bbeta}}$:
$$
\widetilde \Vb_{\cS_{\bbeta},\cS_{\bbeta}} = \frac{C_V}{2} \bbeta_{\cS_{\bbeta}} \bbeta_{\cS_{\bbeta}}\T + \Nb,
$$
where $\Nb$ is some symmetric noise matrix. Note that by (\ref{suppcov}), (\ref{errorbound}) gives a bound on $\|\Nb\|_{\max}$. Next we make usage of Lemma \ref{wainwriglemma6}, which is a restatement of Lemma 6 in \cite{amini2008high} and can be found in the Appendix for the reader's convenience. We start by verifying that condition (\ref{Nmatrixver}) indeed holds for the matrix $\Nb$. We have that:
\begin{align} \label{noisematrixbound}
\|\Nb\|_{\max} = \| \Vb_{\cS_{\bbeta},\cS_{\bbeta}} - C_V \bbeta_{\cS_{\bbeta}} \bbeta_{\cS_{\bbeta}}\T\|_{\max} \leq \frac{6 C_V}{\gamma s},
\end{align}
with the last inequality following from (\ref{errorbound}).
Selecting $\gamma = 240$ bounds $\|\Nb\|_{\max}$ by $\frac{C_V}{40 s}$, as required in (\ref{Nmatrixver}). Thus, by Lemma \ref{wainwriglemma6} we conclude that:
\begin{itemize}
\item[(a)] For $\gamma_1 := \lambda_{\max}(\widetilde \Vb)$ and the second largest in magnitude eigenvalue of $\widetilde \Vb$ we have $\gamma_1 > |\gamma_2|$.
\item[(b)] The corresponding principal eigenvector of $\widetilde \Vb$ --- $\widetilde \bz_{\cS_{\bbeta}}$ satisfies the following inequality:
$$\left \|\widetilde \bz_{\cS_{\bbeta}} - \bbeta_{\cS_{\bbeta}}\right \|_{\infty} \leq \frac{1}{2\sqrt{s}}.$$
\end{itemize}

Next we show that the rest of the sign matrix $\Ub$, i.e. $\Ub_{\cS_{\bbeta}^c,\cS_{\bbeta}}$ and $\Ub_{\cS_{\bbeta}^c, \cS_{\bbeta}^c}$ can be selected in such a way, so that the blocks $\widetilde \Vb_{\cS_{\bbeta}^c, \cS_{\bbeta}}$ and $\widetilde \Vb_{\cS_{\bbeta}^c, \cS_{\bbeta}^c}$ are 0. For this purpose we select $\Ub_{\cS_{\bbeta}^c, \cS_{\bbeta}} = \frac{1}{\lambda_n}\Vb_{\cS_{\bbeta}^c, \cS_{\bbeta}}$ and $\Ub_{\cS_{\bbeta}^c, \cS_{\bbeta}^c} = \frac{1}{\lambda_n}\Vb_{\cS_{\bbeta}^c, \cS_{\bbeta}^c}$. Since it is clear that the vector $(\widetilde \bz_{\cS_{\bbeta}}, 0_{\cS_{\bbeta}^c})$ is the principal eigenvector of $\widetilde \Vb$, if $\Ub$ is a sign matrix, Lemma \ref{SDPlemma} will conclude that ---  $(\widetilde \bz \T_{\cS_{\bbeta}}, 0\T_{\cS_{\bbeta}^c})\T (\widetilde \bz\T_{\cS_{\bbeta}}, 0\T_{\cS_{\bbeta}^c})$ is the optimal solution to the optimization problem, which will in turn conclude our claim.

It remains to show that the specified $\Ub$ is indeed a sign matrix. Note that by Cauchy-Schwartz for $k \in \cS_{\bbeta}^c$ and any $j$, we have:
\begin{align} \label{CSsign}
\Vb^{jk} \leq \sqrt{\Vb^{jj}}\sqrt{\Vb^{kk}}.
\end{align}
From (\ref{errorbound}) if $j \in \cS_{\bbeta}$, we have that high probability:
$$
\Vb^{jj} \leq \frac{C_V}{s} +  \frac{6 C_V}{\gamma s} = \frac{(\gamma + 6)C_V}{\gamma s}.
$$
Hence, it is sufficient to select $m, H$ large enough so that: 
$$
\Vb^{kk} \leq \frac{\gamma C_V}{4 (\gamma + 6)s}.
$$
To achieve the above bound, we make usage of the following tail inequality, for $\chi^2$ random variables which we take from \cite{laurent2000adaptive} (see Lemma 1):
\begin{align*}
\p\left(\frac{\chi^2_H}{H}  \geq 1 + 2 \sqrt{\frac{x}{H}} + \frac{2 x}{H}\right) \leq \exp(-x).
\end{align*}
Note that, $\Vb^{kk} \sim \frac{1}{mH}\chi^2_H$ for $k \in \cS_{\bbeta}^c$. Thus applying the bound above we have 
\begin{align}\label{chisqbound}
\frac{1}{mH}\chi^2_H \leq \frac{1}{m} + \frac{2}{m} \sqrt{\frac{x}{H}} + \frac{2 x}{mH} \leq \frac{2}{m} + \frac{3 x}{mH},
\end{align}
with probability at least $\exp(-x)$. Applying (\ref{chisqbound}) it can be easily seen that by selecting:
$$
x = \frac{n \gamma C_V}{24 (\gamma + 6) s},
$$
we can ensure (after using (\ref{mnewboundSDP})) that $\Vb^{kk} \leq  \frac{\gamma C_V}{4 (\gamma + 6)s}$ for all $k \in \cS_{\bbeta}^c$. Requiring:
\begin{align} \label{nlowerboundSDP}
\frac{n \gamma C_V}{24 (\gamma + 6) s} \geq 2 \log(p-s),
\end{align}
ensures that the probability of the event is asymptotically $1$ from the union bound. This combined with (\ref{CSsign}) shows that the so defined matrix $\Ub$ is indeed a sign matrix, which concludes the proof.

\section{Discussion} \label{discsec} 
Sliced inverse regression has been widely applied in various scientific problems. Though it is a successful tool in terms of data visualization, and provably works when the dimension $p$ is not large, its behavior in the high-dimensional regime $p \gg n$ is much less well understood. 

In this paper, we studied the support recovery of SIM within the class of models $\mathcal{F}_{A}$. We demonstrated that the optimal sample size of this problem is of the order $s\log(p-s)$. Two unforeseeable results of our analysis might be of particular interest for future investigations. 

Recall that a central subspace of a pair of random variables $(Y,\bX)$ is the minimal subspace $\mathcal{S}$ such that $\ind Y \bX\big| P_{\mathcal{S}}\bX$. The first implication of our results, as we hinted in Remark \ref{rmk:on:H}, is that if we focus on estimating the support of the central subspace rather than consistently estimating the intermediate matrix,  better convergence rate might be expected. For instance, the results in \cite{lin2015consistency} show that, under mild conditions, the convergence rate of the SIR estimate $\Vb$ of $\var(\bbE[\vX|Y])$ is $O_{P}(\frac{1}{H}+\sqrt{\frac{p}{n}})$. In order to get a consistent estimation of the intermediate matrix $\var(\bbE[\vX|Y])$, the slices number $H$ has to be diverging. The result regarding the choice of $H$ in this paper suggests that it would be possible to get an improved convergence rate of estimating the central subspace which is expected to be $\sqrt{\frac{p}{n}}$. 

The second consequence is that estimating the central subspace might be easier than that of sparse PCA in terms of computational cost.  At first glance, the unknown nonlinear link function $f$ could bring in difficulties in determining the optimal rate, and it might be reasonable to expect that this difficulty will increase the computational cost in general. The results in this paper however, provide the opposite evidence --- as long as $s=O(p^{1-\delta})$ for some $\delta>0$, the computationally efficient algorithms DT-SIR and SDP approach can solve the support recovery problem. In other words, the tradeoff between computational and statistical efficiency for estimating the central subspace might occur in a more subtle regime, where $s\propto p$ and $n\propto p$. Finally combining our results with the results of \cite{lin2015consistency}, calculating the minimax rate of an estimator for the SDR space in model \eqref{eqn:model:sir_single_index} appears to be a plausible exercise, and is left for future work.

\section*{Acknowledgments} This research was conveyed while the first author was a graduate student in the Department of Biostatistics at Harvard University. Lin's research is supported by the Center of Mathematical Sciences and Applications at Harvard University. Liu's research is supported by the NSF Grant DMS-1120368 and NIH  Grant R01 GM113242-01. We thank Professor Tianxi Cai, Harvard University and Professor Noureddine El Karoui, University of California, Berkeley for valuable discussions which led to improvements of the present manuscript. In addition, the authors are grateful to Professor Laurent El Ghaoui, University of California, Berkeley and Dr Youwei Zhang for providing the Matlab code used for solving the SDP problem. The authors also express gratitude to the referee and anonymous reviewers for raising important points, which further bettered the manuscript.

\appendix

\section{Technical Proofs}\label{technical:proofs}

\begin{proof}[Proof of Lemma \ref{signallemma}]
First take any vector $\bb \in \mathbb{R}^p$ such that $\bb \perp \bbeta$. We have:
\begin{align*}
\e[\bX| Y] \T \bb = \e[\bX\T \bb]  = 0,
\end{align*}
since $\bX\T \bb$ is independent of  $\bX\T \bbeta$ and $\varepsilon$. This implies that 
\begin{align}
\e[\bX| Y] = c(Y) \bbeta, \label{condexpidentity}
\end{align}
for some real valued function $c$. Since $\bbeta$ is a unit vector it follows that $c(Y) = \e[\bX\T \bbeta |Y]$. Take $j \in \cS_{\bbeta}$ and apply (\ref{condexpidentity}) to get:
\begin{align} \label{varcalc}
\var[\bm_j(Y)] = \frac{\var [\e[\bX\T \bbeta |Y]]}{s}.
\end{align}
Combining the observation above with the following two inequalities:
$$
A \leq \var(\e[\bX\T\bbeta | Y]) \leq \var(\bX\T\bbeta) = 1,
$$
gives the desired result.
\end{proof}

\begin{proof}[Proof of Proposition \ref{slicedstabsuffcond}] 
We first note that without loss of generality we can consider the function $\widetilde \bm$ to be non-negative, at the price of potentially shrinking the interval $(B_0, \infty)$ to $(B_0 + \eta, \infty)$ by any $\eta > 0$. To see this fix an $\epsilon > 0$, and define $\widetilde \bm'(x) = \widetilde \bm(x) -\widetilde \bm(B_0 + \eta)$ for $x \in (B_0 + \eta, \infty)$. Then since (\ref{mglobal}) holds on $(-\infty, -B_0) \cup  (B_0, \infty)$, clearly:
$$
|\bm(x) - \bm(y)| \leq |\widetilde \bm' (|x|) - \widetilde \bm'(|y|)|, \mbox{ for } x, y \in (-\infty, -B_0 - \eta) \mbox { or }  (B_0 + \eta, +\infty).
$$
By the convexity of the map $x \mapsto x^{2 + \xi}$ we have $\widetilde{\bm}'(x)^{2 + \xi} \leq 2^{1 + \xi}(|\widetilde{\bm}(x)|^{2 + \xi} + |\widetilde \bm(B_0 + \eta)|^{2 + \xi})$ and hence $\e[|\widetilde \bm'(|Y|)|^{(2 + \xi)}] < \infty$. Finally by definition $\widetilde \bm'$ is non-negative and non-decreasing on $(B_0 + \eta, \infty)$. 

Next note that if $Y$ has a bounded support, this proposition clearly follows from assumption (\ref{msmoothness}) alone. Thus, without loss of generality we assume that $Y$ has unbounded support (from both sides, as if one of them is bounded we can handle it in much the same way as the proof below). 

Let $\widetilde B_0 = B_0 + \eta$, for some small fixed $\eta > 0$. Fix any partition $a \in \mathcal{A}_{H}(l,K)$. Let $S_0 = \{h: a_h  \in [-\widetilde B_0, \widetilde B_0]\}$, and let $h_{m} = \min S_0, h_M = \max S_0$. Note that the following simple inequality holds for any $2 \leq h \leq h_m - 2$ or $h_M + 1 \leq h \leq H-1$:
\begin{align*}
\var[\bm(Y) | a_h < Y \leq a_{h + 1}] & \leq \inf_{t \in (a_h, a_{h  + 1}]} \e[(\bm(Y) - \bm(t))^2 | a_h < Y \leq a_{h + 1}] \\
& \leq \sup_{y, t \in (a_h, a_{h  + 1}]} (\bm(y) - \bm(t))^2  \leq  (\widetilde \bm(|a_h|) - \widetilde \bm(|a_{h + 1}|))^2.
\end{align*}
This gives us the following inequality:
\begin{align} \label{firstpart}
\sum_{h = 2}^{h_{m} - 2}\var[\bm(Y) | a_h < Y \leq a_{h + 1}] &\leq \sum_{h = 2}^{h_{m} - 2}(\widetilde \bm(|a_h|) - \widetilde \bm(|a_{h + 1}|))^2 \\
& \leq (\widetilde \bm(|a_2|) - \widetilde \bm(|a_{h_m - 1}|))^2 \nonumber,
\end{align}
where the last inequality holds since $\widetilde \bm$ is non-decreasing. Similar inequality holds for the other tail as well.

Using a similar technique we obtain the following bound on the interval: $[-\widetilde B_0, \widetilde B_0]$:
\begin{align*}
\sum_{h = h_m}^{h_M - 1} \var[\bm(Y) | a_h < Y \leq a_{h + 1}] & \leq \sum_{h = h_m}^{h_M - 1} \e [(\bm(Y) - \bm(a_h))^2 | a_h < Y \leq a_{h+1} ]\\
& \leq  \sum_{h = h_m}^{h_M - 1}  \sup_{y \in (a_h , a_{h+1}]}(\bm(y) - \bm(a_h))^2.
\end{align*}
Notice further that:
\begin{align*}
& \var[\bm(Y) | a_{h_{m}-1} < Y \leq a_{h_m}]  \leq \sup_{y \in (a_{h_{m}-1}, a_{h_m}]} (\bm(y) - \bm(-\widetilde B_0))^2\\
& \leq \sup_{y \in (a_{h_{m} - 1}, -\widetilde B_0]} (\bm(y) - \bm(-\widetilde B_0))^2 + \sup_{y \in [-\widetilde B_0, a_{h_{m}}]} (\bm(y) - \bm(-\widetilde B_0))^2.
\end{align*}
And a similar inequality holds for $\var[\bm(Y) | a_{h_{M}} < Y \leq a_{h_{M}+1}]$. Thus:
\begin{align*}
& \sum_{h = h_m - 1}^{h_M } \var[\bm(Y) | a_h < Y \leq a_{h + 1}] \leq \underbrace{ \sum_{h = h_m}^{h_M - 1}  \sup_{y \in (a_h , a_{h+1}]}(\bm(y) - \bm(a_h))^2}_{I_1} \\
& + \underbrace{\sup_{y \in (a_{h_{m}-1}, -\widetilde B_0]} (\bm(y) - \bm(-\widetilde B_0))^2}_{I_2} +  \underbrace{\sup_{y \in [-\widetilde B_0, a_{h_{m}}]} (\bm(y) - \bm(-\widetilde B_0))^2}_{I_3}\\
& + \underbrace{\sup_{y \in [\widetilde B_0, a_{h_{M} + 1}]} (\bm(y) - \bm(\widetilde B_0))^2}_{I_4} +  \underbrace{\sup_{y \in (a_{h_{M}}, \widetilde B_0]} (\bm(y) - \bm( \widetilde B_0))^2}_{I_5}.
\end{align*}
We have:
\begin{align}\label{midpart}
I_1 + I_3 + I_5 & \leq \sup_{b \in \Pi_{2|S_0|+3}(\widetilde B_0)} \sum_{i = 2}^{2|S_0|+3} (\bm(b_i) - \bm(b_{i - 1}))^2 \\
& \leq \sup_{b \in \Pi_{2|S_0| +3}(\widetilde B_0)}\left( \sum_{i = 2}^{2|S_0| + 3} |\bm(b_i) - \bm(b_{i - 1})|\right)^2. 
\end{align}
To see this, consider a partition containing the points $b_1 = -\widetilde B_0, b_3 = a_{h_m}, \ldots, b_{2|S_0| + 1} = a_{h_M}, b_{2|S_0| + 3} = \widetilde B_0$, and $b_{2k} = \operatornamewithlimits{argmax}_{y \in (b_{2k-1}, b_{2k+1}]} (\bm(y) - \bm(b_{2k-1}))^2, k \in [|S_0|]$ and $b_{2 |S_0| + 2} =  \operatornamewithlimits{argmax}_{y \in (b_{2|S_0| + 1}, \widetilde B_0]} (\bm(y) - \bm(\widetilde B_0))^2$ (note that if the $\max$ doesn't exist we can take a limit of partitions converging to it).

Next, we control $I_2$:
\begin{align*}
I_2 = \sup_{y \in (a_{h_{m}-1}, -\widetilde B_0]} (\bm(y) - \bm(-\widetilde B_0))^2 \leq  (\widetilde \bm(|a_{h_{m}-1}|) - \widetilde \bm(\widetilde B_0))^2.
\end{align*}
with the last inequality following from (\ref{mglobal}). Combining this bound with (\ref{firstpart}) we get:
\begin{align} \label{midstep1}
(\widetilde \bm(|a_2|) - \widetilde \bm(|a_{h_m - 1}|))^2 + I_2 \leq (\widetilde \bm(|a_2|) - \widetilde \bm(\widetilde B_0))^2.
\end{align}

Similarly, for $I_4$ and the other bound in (\ref{firstpart}) we have:
\begin{align} \label{midstep2}
(\widetilde \bm(|a_H|) - \widetilde \bm(|a_{h_M + 1}|))^2 + I_4 \leq (\widetilde \bm(|a_H|) - \widetilde \bm(\widetilde B_0))^2.
\end{align}

Finally, we deal with the tail part:
\begin{align} \label{lastpart}
\var[\bm(Y) | Y \leq a_2] & \leq \e[(\bm(Y) - \bm(a_2))^2 | Y \leq a_2] \\
&  \leq \e[(\widetilde \bm(|Y|) - \widetilde \bm(|a_2|))^2 | Y \leq a_2] \nonumber \\
& \leq 4 \e[(\widetilde \bm(|Y|))^2 | Y \leq a_2]\nonumber \leq 4 (\e[|\widetilde \bm(|Y|)|^{2+\xi} | Y \leq a_2])^{2/(2 + \xi)}\nonumber\\
& = 4 \left(\int_{-\infty}^{a_2} |\widetilde \bm(|y|)|^{2+\xi} d \p(Y \leq y) \p(Y \leq a_2)^{-1}\right)^{2/(2 + \xi)} \nonumber\\
& = o(1) \p(Y \leq a_2)^{-2/(2 + \xi)}.\nonumber
\end{align}
where we used the fact that $\e[|\widetilde \bm(|Y|)|^{2+\xi}]$ is bounded by assumption, and the $o(1)$ is in the sense of $|a_2| \rightarrow \infty$ . We can show a similar inequality for the other tail --- $\var[\bm(Y) | Y \geq a_H]$. 

Combining (\ref{firstpart}), (\ref{midstep1}),  (\ref{midstep2}), (\ref{midpart}) and (\ref{lastpart}) we have:
\begin{align*}
& \sum_{h = 1}^H \var[\bm(Y) | a_h < Y \leq a_{h+1}] 
\leq \sup_{b \in \Pi_{2|S_0| + 3}(\widetilde B_0)}\left( \sum_{i = 2}^{2|S_0| + 3} |\bm(b_i) - \bm(b_{i - 1})|\right)^2  \\
& + o(1) \p(Y \geq a_H)^{-2/(2 + \xi)} +  o(1) \p(Y \leq a_2)^{-2/(2 + \xi)} \\
& + (\widetilde \bm(|a_2|) - \widetilde \bm(\widetilde B_0))^2 +  (\widetilde \bm(|a_H|) - \widetilde \bm(\widetilde B_0))^2.
\end{align*}
Since $ (\widetilde \bm(|a_2|) - \widetilde \bm(\widetilde B_0))^2 \leq 4 (\widetilde \bm(|a_2|))^2$, and we know that $\widetilde \bm(|a_2|)^{2 + \xi} \frac{l}{H} \leq \widetilde \bm(|a_2|)^{2 + \xi} \p(Y \leq a_2) \rightarrow 0$, this means that $\widetilde \bm(|a_2|)^{2 } \frac{1}{H^{2/(2 + \xi)}} \rightarrow 0$. Furthremore $ o(1) \p(Y \leq a_2)^{-2/(2 + \xi)}\frac{1}{H^{2/(2 + \xi)}} = o(1)$. Finally we recall that by (\ref{msmoothness}) we have:
\begin{align*}
\sup_{b \in \Pi_{2|S_0| + 3}(\widetilde B_0)}\left( \sum_{i = 2}^{2|S_0| + 3} |\bm(b_i) - \bm(b_{i - 1})|\right)^2  = o\left(|S_0|^{2/(2 + \xi)}\right).
\end{align*}
However $|S_0| \leq \p(-\widetilde B_0 \leq Y \leq \widetilde B_0)H/l + 1$ and thus:
$$\sup_{b \in \Pi_{2|S_0| + 3}(\widetilde B_0)}\left( \sum_{i = 2}^{2|S_0| + 3} |\bm(b_i) - \bm(b_{i - 1})|\right)^2 = o\left(H^{2/(2 + \xi)}\right),
$$
which finishes the proof.
%
%
%
%
%
\end{proof}

\begin{proof}[Proof of Lemma \ref{slicedmeanconc}] Before we go to the main proof of the lemma we first formulate a simple but useful concentration inequality. 
\begin{lemma}\label{bernst} Let $\widetilde X$ be a sub-Gaussian random variable with $\|\widetilde X\|_{\psi_2} \leq \mathcal{K}$. Let $A(\widetilde X,\nu) \in \{0,1\}$ be any (randomized) acceptance rule such that $\p(A = 1) \geq q$, with $\nu$ being any random variable. Let $X_1, \ldots, X_r$ be an i.i.d. samples of the distribution $\widetilde X | A(\widetilde X, \nu) = 1$. Denote with $\mu = \e[X_i]$. Then there exist some absolute constants $C_1, C_2 > 0$ such that:
$$
\p(|\overline X - \mu| > \epsilon) \leq 2 \exp\left(-\frac{\epsilon^2 r}{C_1\mathcal{K}^2\exp(\sqrt{1 - \log(q)}) + C_2\mathcal{K} \epsilon}\right).
$$
\end{lemma}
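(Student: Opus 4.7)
The plan is to establish the inequality by a moment-based Bernstein argument applied to the conditional random variable $X_i$. There are three main steps: (i) obtain sharp moment bounds for $|X_i|$; (ii) recast them in Bernstein moment form $\e|X_i - \mu|^p \leq (p!/2)\,B^{p-2}\sigma^2$; and (iii) invoke the standard i.i.d.\ Bernstein inequality.

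For step (i), I would first observe that since $X_i \stackrel{d}{=} \widetilde X \mid A=1$ with $\p(A=1) \geq q$, for any event one has $\p(X_i \in \mathcal{E}) \leq q^{-1}\p(\widetilde X \in \mathcal{E})$. Combining with the sub-Gaussian tail $\p(|\widetilde X| > t) \leq 2\exp(-ct^2/\mathcal{K}^2)$ yields $\p(|X_i| > t) \leq \min(1,\, 2q^{-1}\exp(-ct^2/\mathcal{K}^2))$. The key refinement I would exploit is that this $\min$ equals $1$ below the crossover threshold $t_0 \asymp \mathcal{K}\sqrt{|\log q|}$, and only above $t_0$ does the sub-Gaussian form kick in; integrating this refined tail to compute moments gives $\e|X_i|^p = O(\mathcal{K}^p (\log(2/q))^{p/2})$ in the regime $p \lesssim \log(1/q)$. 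This is dramatically sharper than the naive bound $\e|X_i|^p \leq q^{-1}\e|\widetilde X|^p$, which would only buy a variance proxy of order $\mathcal{K}^2/q$; the refined moment bound is what will enable the sub-polynomial $q$-dependence claimed. For $p \gtrsim \log(1/q)$ I would fall back on the naive moment bound, which turns out to be innocuous because the factorial in the Bernstein denominator overwhelms $q^{-1}$.

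For step (ii), I would plug the refined moment estimate into the Bernstein moment condition and hunt for the tightest admissible pair $(B,\sigma^2)$. Via Stirling's approximation one finds $(\log(2/q))^{p/2}/p! \asymp (e^2\log(2/q)/p^2)^{p/2}/\sqrt{p}$, which as a function of $p$ is maximized at $p^\star \asymp \sqrt{\log(2/q)}$ with maximum value $\exp(\sqrt{\log(2/q)})$. This saddle-point calculation will pinpoint the choices $B = C_2\mathcal{K}$ and $\sigma^2 = C_1\mathcal{K}^2\exp(\sqrt{1-\log q})$ as simultaneously valid for every $p \geq 2$. The unusual exponential-of-square-root form is precisely the signature of optimizing the moment order to balance the conditional tail length $\mathcal{K}\sqrt{|\log q|}$ against the factorial decay. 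Step (iii) then consists of applying the classical i.i.d.\ Bernstein inequality with these parameters, yielding the stated concentration bound.

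The main obstacle will be step (ii): verifying that $B \asymp \mathcal{K}$ and $\sigma^2 \asymp \mathcal{K}^2\exp(\sqrt{1-\log q})$ are admissible uniformly across all $p \geq 2$, and in particular that the crossover between the refined moment bound (used for $p \lesssim \log(1/q)$) and the naive one (used for $p \gtrsim \log(1/q)$) can be handled without inflating either parameter. A careful uniform control over the functions $(c\mathcal{K})^p(\log(1/q))^{p/2}/p!$ and $q^{-1}(c\mathcal{K})^p p^{p/2}/p!$, plus a standard Minkowski-type passage from $\e|X_i|^p$ to $\e|X_i - \mu|^p$, is the core computation I expect to be the most delicate.
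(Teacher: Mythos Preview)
Your approach is correct and will work, but the paper takes a cleaner route that avoids the moment-by-moment bookkeeping you anticipate as delicate. Instead of bounding each $\e|X_i|^p$ separately, splitting into the two regimes $p \lesssim \log(1/q)$ and $p \gtrsim \log(1/q)$, and then optimizing via Stirling to locate the saddle at $p^\star \asymp \sqrt{\log(1/q)}$, the paper aggregates the moments immediately into the exponential moment $\e[\exp(\lambda|X|)]$. Concretely, it writes $\e|X|^j \leq \int_0^{f(q)} j t^{j-1}\,dt + q^{-1}e\int_{f(q)}^\infty j t^{j-1}e^{-ct^2/\mathcal{K}^2}\,dt$ with the same crossover point $f(q) \asymp \mathcal{K}\sqrt{1-\log q}$ that you identify, multiplies by $\lambda^j/j!$, and sums over $j$. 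The series collapses the $t$-integrals into a single Gaussian integral $\int e^{\lambda t - ct^2/\mathcal{K}^2}\,dt$, which is evaluated in closed form and then controlled by Mill's ratio. The resulting MGF bound is plugged into the van der Vaart--Wellner form of Bernstein's inequality at one fixed $\lambda = \tfrac{1}{2}\sqrt{c}/\mathcal{K}$, and the factor $\exp(\sqrt{1-\log q})$ falls out directly from $\exp(\lambda f(q))$ with no optimization or case split. What you gain with your route is a more transparent explanation of \emph{why} the exponential-of-square-root appears (as the value at the saddle $p^\star$); what the paper gains is that all the uniformity over $p$ and the matching of the two regimes is absorbed into one explicit integral, so the ``careful uniform control'' you flag as the main obstacle simply does not arise.
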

As a Corollary to Lemma \ref{bernst}, observe that since $\sup_{q \in [0,1]} \exp(\sqrt{1 - \log(q)}) q = e$, we then have the following:
$$
\p(|\overline X - \mu| > \epsilon) \leq 2 \exp\left(-\frac{\epsilon^2 r}{C_1\mathcal{K}^2q^{-1} + C_2\mathcal{K} \epsilon}\right),
$$
for some absolute constants $C_1, C_2 > 0$.

By (\ref{DKWbound}) we know that $\p(Y \in S_h) \geq \frac{1}{H} - 2\epsilon$ on $E$, thus setting $q = \frac{1}{H} - 2\epsilon$, by Lemma \ref{bernst} conditionally on  $\{Y_{(m h)}  : h \in [H-1]\}$ we have for all $j \in G$ and all $h$:
\begin{align*}
& \p\left(\left|\overline \bX_{h,1:(m-1)}^j  - \bmu_h^j \right| > \eta \right) \leq 2 \exp\left(- \frac{\eta^2 (m-1)}{C_1\mathcal{K}^2q^{-1} + C_2\mathcal{K} \eta}\right).
\end{align*}

Note that  Lemma \ref{bernst}  is applicable in this case, since the statistics $\bX^j_{h,\pi_i}$ are i.i.d. conditionally on $Y_{(m(h-1))}$ and $Y_{(mh)}$, where $\pi$ is a random permutation, as we noticed in the main text. Therefore $\bX^j_{h,\pi_i} \stackrel{d}{=} \bX^j | A(\bX^j, \nu)$ for the acceptance rule $A(\bX^j, \nu) := A(\bX, \varepsilon) = \mathbbm{1}(f(\bX\T \bbeta,\varepsilon) \in S_h)$. Furthermore, notice that the above inequality holds regardless of the values of $\{Y_{(m h)}  : h \in [H-1]\}$, on the event $E$. 

Finally, using union bound across the slices and the indexes $j \in G$, we have that this holds for all slices or in other words 
\begin{align*}
\p\left(\max_{j \in G, h \in [H]} \left|\overline \bX_{h,1:(m-1)}^j  - \bmu_h^j \right| > \eta \right) \leq 2 |G| H \exp\left(- \frac{\eta^2 (m-1)}{C_1\mathcal{K}^2q^{-1} + C_2\mathcal{K} \eta}\right),
\end{align*}
 on the event $E$. This is precisely what we wanted to show.
\end{proof}

\begin{proof}[Proof of Lemma \ref{bernst}] Observe that the random variable $X \stackrel{d}{=} \widetilde X | A(\widetilde X, \nu)$ satisfies the following inequality:
\begin{align*}
\p(|X| \geq t) & = \p( |\widetilde X| \geq t | A(\widetilde X, \nu) = 1) = \frac{\p(|\widetilde X| \geq t, A(\widetilde X, \nu) = 1)}{\p( A(\widetilde X, \nu) = 1)} \\
&\leq q^{-1} \p(|\widetilde X| \geq t) \leq q^{-1} e \exp(-ct^2/\mathcal{K}^2),
\end{align*}
where $c$ is an absolute constant, and the last inequality follows by the fact that $\widetilde X$ is assumed to be sub-Gaussian. Clearly the above bound can be substituted with the trivial bound $1$, for values of $t \leq \sqrt{-\mathcal{K}^2/c \log \left(\frac{q}{e}\right)}$. Let $f(q) := \sqrt{-\mathcal{K}^2/c \log \left(\frac{q}{e}\right)}$. Next, for any positive integer $j \in \mathbb{N}$, we have:
\begin{align*}
\e[|X|^j] & = \int_{0}^{\infty} \p(|X| \geq t) j t^{j - 1} d t \leq \int_{0}^{ f(q)} j t^{j - 1} d t + \int_{f(q)}^{\infty} q^{-1} e \exp(-ct^2/\mathcal{K}^2) j t^{j - 1} d t.
\end{align*}
Multiplying by $\lambda^j/j!$ and summing over $j = 1,2,\ldots$, we obtain:
\begin{align*}
\e[\exp(\lambda |X|)] & \leq 1 + \lambda \int_{0}^{ f(q)} \exp(\lambda t) d t + \lambda \int_{f(q)}^{\infty} q^{-1} e \exp(-ct^2/\mathcal{K}^2) \exp(\lambda t) d t\\
& = \exp(\lambda f(q)) + \frac{\lambda e \sqrt{\pi}\mathcal{K}}{q\sqrt{c}} \exp(\mathcal{K}^2\lambda^2/4c)\left[1 - \Phi\left(\frac{f(q) - \mathcal{K}^2\lambda/(2c)}{\mathcal{K}/\sqrt{2c}}\right)\right].
\end{align*}
Assuming that $\lambda$ is small enough so that $f(q) - \mathcal{K}^2 \lambda/(2c) > 0$, we can use the well known tail bound $1 - \Phi(x) \leq \phi(x)/x$, for $x > 0$ to get:
\begin{align*}
\e[\exp(\lambda |X|)] & \leq  \exp(\lambda f(q)) + \frac{\lambda e \mathcal{K}^2 \sqrt{\pi}\exp(\mathcal{K}^2\lambda^2/4c)}{q(f(q) - \mathcal{K}^2\lambda/(2c)) \sqrt{2} c} \phi\left(\frac{f(q) - \mathcal{K}^2\lambda/(2c)}{\mathcal{K}/\sqrt{2c}}\right)\\
& = \exp(\lambda f(q))\left(1 + \frac{\lambda \mathcal{K}^2}{2 c(f(q) - \mathcal{K}^2 \lambda / (2c))}\right).
\end{align*}
Next, by the triangle inequality and the convexity of the exponent and the absolute value we have:
$$
\e[\exp(\lambda|X - \mu|)] \leq \e[\exp(\lambda|X|)] \exp[ \e[\lambda |X|]] \leq  \e[\exp(2\lambda|X|)].
$$
Set $Z := X - \mu$. We have showed the following:
\begin{align}
\e[\exp[\lambda|Z|] - 1 - \lambda |Z|] & \leq \e[\exp[\lambda|Z|]]  \leq \exp(2\lambda f(q))\left(1 + \frac{\lambda \mathcal{K}^2}{c (f(q) - \mathcal{K}^2 \lambda / c)}\right),\label{bernstein}
\end{align}
for a $\lambda$ such that $f(q) - \mathcal{K}^2 \lambda/c > 0$. By selecting $\lambda := \frac{1}{2}\sqrt{\frac{c}{\mathcal{K}^2}}$, one can easily verify that $(f(q) - \mathcal{K}^2 \lambda / c) \geq \lambda^{-1}/4$, and hence $\frac{\lambda \mathcal{K}^2}{c(f(q) - \mathcal{K}^2 \lambda / c) } \leq 1$, for any $q \leq 1$. With this choice of $\lambda$ (\ref{bernstein}) becomes:
$$
\e[\exp[\lambda|Z|] - 1 - \lambda |Z|]\lambda^{-2} \leq  \frac{8\mathcal{K}^2}{c} \exp\left(\sqrt{\frac{c}{\mathcal{K}^2}}f(q)\right) = \frac{8\mathcal{K}^2}{c}  \exp\left(\sqrt{- \log \frac{q}{e}}\right).
$$

Recall that a version Bernstein's inequality (see Lemma 2.2.11 \cite{van1996weak}) states that if the following moment condition $\e|Z|^j \leq j! \lambda^{-(j-2)} v/2$ is met for all $j \geq 2$ and $Z_i \sim Z, i \in [r]$ are mean $0$, i.i.d. then:
$$
\p\left(\left|\sum_{i = 1}^r Z_i\right| \geq \epsilon\right) \leq 2 \exp\left(-\frac{1}{2}\frac{\epsilon^2}{r v + \lambda^{-1} \epsilon}\right).
$$ 
Observe that by a Taylor expansion the condition $\e[\exp[\lambda|Z|] - 1 - \lambda |Z|]\lambda^{-2} \leq \frac{1}{2}v$ implies the moment condition from above. Hence we conclude:
$$
\p\left(\left|\frac{1}{r}\sum_{i = 1}^r X_i - \mu\right| \geq \epsilon\right) \leq 2 \exp\left(- \frac{1}{2}\frac{\epsilon^2 r}{16\mathcal{K}^2\exp(\sqrt{1 - \log(q)})/c + 2 \mathcal{K}/\sqrt{c} \epsilon}\right),
$$
which completes the proof.
\end{proof}

%
%
%

\begin{proof}[Proof of Lemma \ref{simplelemma}]
Using the sliced stability condition, for large $H$ we get:
\begin{align*}
\left| \var[\bm_j(Y)] - \sum_{h = 1}^H (\bmu_h^j)^2  \p(Y \in S_h) \right| & = \sum_{h = 1}^H \var[\bm_j(Y) | Y \in S_h] \p(Y \in S_h ) \\
& \leq \frac{CH^\kappa}{s} \left(\frac{1}{H} + 2 \epsilon \right).
\end{align*}
This shows (\ref{forms12}). Consequently we have:
\begin{align*}
\left(\frac{1}{H} - 2\epsilon\right) \sum_{h = 1}^H (\bmu_h^j)^2  \leq  \sum_{h = 1}^H (\bmu_h^j)^2 \p(Y \in S_h) \leq \frac{C_V}{s} + \frac{CH^\kappa}{s} \left(\frac{1}{H} + 2 \epsilon \right).
\end{align*}
This yields (\ref{expectationsquared}). To get (\ref{expectationabs}) we proceed as follows:\\
\begin{align*}
\left(\frac{1}{H} - 2\epsilon\right)  \sum_{h = 1}^H |\bmu_h^j|  &\leq \sum_{h = 1}^H |\bmu_h^j| \p(Y \in S_h ) \leq \sqrt{\sum_{h = 1}^H \p(Y \in S_h)}\sqrt{\sum_{h = 1}^H (\bmu_h^j)^2 \p(Y \in S_h)} \\
& \leq \sqrt{\frac{C_V}{s} + \frac{CH^\kappa}{s} \left(\frac{1}{H} + 2 \epsilon \right)},
\end{align*}
and we are done. 
\end{proof}

\begin{proof}[Proof of Lemma \ref{calculation}] Note that on the event $\widetilde{E}$ we have the following chain of inequalities:
\begin{align}
& \frac{1}{H}\sum_{h = 1}^H \left|\left( \frac{1}{m}\bX^j_{h,m} + \frac{m-1}{m} \overline \bX^j_{h,1:(m-1)}\right)^2 - \frac{(m-1)^2}{m^2}(\bmu_h^j)^2\right| \label{cont} \\
& \leq \frac{1}{H m^2} \sum_{h = 1}^H (\bX_{h,m}^j)^2 +  \frac{2(m-1)}{H m^2} \sum_{h = 1}^H |\bX_{h,m}^j| (\eta + |\bmu_h^j |) +  \frac{1}{H}\frac{(m-1)^2}{m^2}\sum_{h = 1}^H  \eta (2|\bmu_h^j| + \eta) \nonumber\\
& \leq \frac{1}{m} \frac{1}{n} \sum_{r = 1}^n (\bX_{r}^j)^2 +  \frac{2}{H m} \sqrt{\sum_{r = 1}^n (\bX_{r}^j)^2}\sqrt{ 2\sum_{h = 1}^H  (\eta^2 + (\bmu_h^j)^2)} \nonumber +  \eta^2 +  2\frac{\eta}{H}B_3. \nonumber
\end{align}
where we used that we are on the event $\widetilde{E}$ in the first inequality, and (\ref{expectationabs}), Cauchy-Schwartz and the trivial bounds $\frac{m-1}{m} < 1$, $\frac{1}{n} \sum_{h = 1}^H (\bX_{h,m}^j)^2 \leq \frac{1}{n} \sum_{r = 1}^n (\bX_{r}^j)^2$ in the second one. 

To this end, observe that $(\bX^j_r)^2, r \in [n]$ are i.i.d. random variables with sub-exponential distributions. This can be seen from the standard inequality:
$$
\|(\bX^j_r)^2\|_{\psi_1} \leq 2\|\bX^j_r\|_{\psi_2}^2 \leq 2\mathcal{K}^2.
$$
Clearly we also have $\e[(\bX^j_r)^2] \leq 2 \mathcal{K}^2$. Denote the mean $\e[(\bX^j_r)^2] = \nu$. Now we are in a position to use a Bernstein type of deviation inequality (see Proposition 5.16 in \cite{vershynin2010introduction}). We obtain:
$$
\p\left(\frac{1}{n} \sum_{r = 1}^n (\bX_{r}^j)^2 > \nu + \tau\right) \leq \exp\left(-c \min\left(\frac{\tau^2n}{4\mathcal{K}^4}, \frac{\tau n}{2 \mathcal{K}^2}\right)\right),
$$
for some absolute constant $c > 0$. Hence we infer that, when $\tau \leq 2 \mathcal{K}^2$, there exists a set $\widetilde{\widetilde{E\, }} \subset \widetilde{E}$ failing with probability at most $p\exp(-c \frac{\tau^2n}{4\mathcal{K}^4})$, such that $\frac{1}{n} \sum_{r = 1}^n (\bX_{r}^j)^2  \leq \nu + \tau \leq 2 \mathcal{K}^2 + \tau$ for all $j \in [p]$. Therefore continuing the bound on the event $\widetilde{\widetilde{E\, }}$, we get:
\begin{align*}
 (\ref{cont}) & \leq \frac{(2 \mathcal{K}^2 + \tau)}{m} +  \frac{2\sqrt{2 \mathcal{K}^2 + \tau}}{\sqrt{m}}\sqrt{ 2\eta^2 + 2\frac{B_2}{H}} +   \eta^2 +   2\eta\frac{B_3}{H},
\end{align*}
where we used (\ref{expectationsquared}). This finishes the proof.


\end{proof}

\begin{proof}[Proof of Lemma \ref{covbound}] 
Fix any $\tau \in (0,2]$. Define the event:
$$
\widetilde{\widetilde{E}\,}_{\cS_{\bbeta}} = \widetilde{E}_{\cS_{\bbeta}} \cap \{\sup_{j \in \cS_{\bbeta}} n^{-1}\sum_{i = 1}^n (\bX_r^j)^2 \leq 2 + \tau\}
$$

We first note that the following inequality holds:
\begin{align*}
& \left|\Vb^{jk} -\operatorname{sign}(\bbeta_j) \operatorname{sign}(\bbeta_k)\frac{C_V}{s} \right|  = \left| \frac{1}{H}\sum_{h = 1}^H \overline \bX^j_h \overline \bX^k_h - \operatorname{sign}(\bbeta_j) \operatorname{sign}(\bbeta_k)\frac{C_V}{s} \right| \\
& \leq \left| \frac{1}{H}\sum_{h = 1}^H (\overline \bX^j_h)^2 - \frac{C_V}{s}\right| + \frac{1}{H}\sum_{h = 1}^H  \left|\overline \bX^j_h \right| \left| \operatorname{sign}(\bbeta_j)\overline \bX^j_h - \operatorname{sign}(\bbeta_k)\overline \bX^k_h\right|, 
\end{align*}
where the LHS equals, the LHS of (\ref{covboundineq}) after using (\ref{suppcov}). Fortunately (\ref{bigbound}) and (\ref{boundlemmaxmean}) already give bounds on the first term on the event $\widetilde{\widetilde{E}\,}_{\cS_{\bbeta}}$, which can be seen with identical arguments to Lemmas \ref{simplelemma} and \ref{calculation}. We now show that the second term is small on the same event. Note that the following identity holds:
\begin{align*}
& \frac{1}{H}\sum_{h = 1}^H  \left|\overline \bX^j_h \right| \left| \operatorname{sign}(\bbeta_j)\overline \bX^j_h - \operatorname{sign}(\bbeta_k)\overline \bX^k_h \right| \\
& = \frac{1}{H}\sum_{h = 1}^H  \left| \frac{m-1}{m}\overline \bX^j_{h,1:(m-1)} + \frac{1}{m}\bX^j_{h,m} \right| \left| \operatorname{sign}(\bbeta_j)\overline \bX^j_h - \frac{m-1}{m}\sign(\bbeta_k)\bmu_h^j \right.\\
&~~~~~~~~~~~~~~~~~~~~~~~~~~~~~~~~~~~~~~~~~~~~~~~~~~~~ \left.+\frac{m-1}{m}\sign(\bbeta_k)\bmu_h^k - \operatorname{sign}(\bbeta_k)\overline \bX^k_h\right|.
\end{align*}
Thus on the event ${\widetilde{E}}_{\cS_{\bbeta}}$:
\begin{align*}
& \frac{1}{H}\sum_{h = 1}^H  \left|\overline \bX^j_h \right| \left| \operatorname{sign}(\bbeta_j)\overline \bX^j_h - \operatorname{sign}(\bbeta_k)\overline \bX^k_h\right| \\
& \leq \frac{1}{H}\sum_{h = 1}^H   \left(\bmu_h + \eta + \frac{1}{m}\left|\bX^j_{h,m} \right|\right) \left( 2\eta +  \frac{1}{m}\left| \bX^j_{h,m}\right| +  \frac{1}{m}\left| \bX^k_{h,m}\right|\right),
\end{align*}
where $\bmu_h = |\bmu_h^j| = |\bmu_h^k|$, and we used that $\frac{m-1}{m} < 1$, and the fact that on ${\widetilde{E}}_{\cS_{\bbeta}}$ we have $|\bX^j_{h,1:(m-1)} - \bmu_h^j| \leq \eta$ and similarly $|\bX^k_{h,1:(m-1)} - \bmu_h^k| \leq \eta$. Next we have:
\begin{align*}
&  \frac{1}{H}\sum_{h = 1}^H   \left(\bmu_h + \eta + \frac{1}{m}\left|\bX^j_{h,m} \right|\right) \left( 2\eta +  \frac{1}{m}\left| \bX^j_{h,m}\right| +  \frac{1}{m}\left| \bX^k_{h,m}\right|\right)\\
 & \leq 2 \frac{\eta}{H} \sum_{h = 1}^H \bmu_h + \frac{1}{Hm} \sum_{h = 1}^H (\bmu_h + \eta)(|\bX^j_{h,m}| + |\bX^k_{h,m}|) + 2\eta^2\\
& + \frac{2\eta}{mH} \sum_{h = 1}^H |\bX_{h,m}^j| + \frac{1}{m^2H} \sum_{h = 1}^H (\bX^j_{h,m})^2 + \frac{1}{2 m^2H} \sum_{h = 1}^H (\bX^j_{h,m})^2 + \frac{1}{2 m^2H} \sum_{h = 1}^H (\bX^k_{h,m})^2,
\end{align*}
where we used the simple inequality $ab \leq (a^2 + b^2)/2$. Luckily we have already controlled all of the above quantities. Using Lemma \ref{calculation} and (\ref{bigbound}) we get:
\begin{align*}
& \frac{1}{H}\sum_{h = 1}^H  \left|\overline \bX^j_h \right| \left| \operatorname{sign}(\bbeta_j)\overline \bX^j_h - \operatorname{sign}(\bbeta_k)\overline \bX^k_h \right| \\
& \leq 2\frac{\eta}{H}B_3+  \frac{2\sqrt{2 + \tau}}{\sqrt{m}}\sqrt{ 2\eta^2 + 2\frac{B_2}{H}} + 2\eta^2  + 2\frac{\eta\sqrt{2 + \tau}}{\sqrt{m}} + 2 \frac{2 + \tau}{m},
\end{align*}
where we heavily relied on the fact that on $\widetilde{\widetilde{E\,}}_{\cS_{\bbeta}}$  we have $ \frac{1}{mH} \sum_{r = 1}^n (\bX^j_{r})^2 \leq 2 + \tau$, the rest of the bounds can be seen in the proof of Lemma \ref{calculation}. (For the term note $\frac{1}{mH} \sum_{h = 1}^H |\bX^j_{h,m}|\leq \frac{1}{m} \sqrt{\sum_{h = 1}^H (\bX^j_{h,m})^2/H} \leq \frac{1}{\sqrt{m}} \sqrt{\sum_{r = 1}^n (\bX^j_{r})^2/mH}$).
Finally noting that $2\frac{\eta\sqrt{2 + \tau}}{\sqrt{m}} \leq \eta^2 + \frac{2 + \tau}{m}$ gives the desired result.
\end{proof}


\begin{lemma}\label{wainwriglemma6} Let $\Nb$ be a $s \times s$ symmetric ``noise'' matrix satisfying:
\begin{align} \|\Nb\|_{\max} \leq \frac{C_V}{40 s} \label{Nmatrixver}.
\end{align}
Then the following occurs:
\begin{itemize}
\item[(a)] Let $\gamma_1$ be the maximum eigenvalue of $\widetilde \Vb_{\cS_{\bbeta}, \cS_{\bbeta}} = \frac{C_V}{2} \bbeta_{\cS_{\bbeta}} \bbeta_{\cS_{\bbeta}}\T + \Nb$, i.e. $\gamma_1 := \lambda_{\max}(\widetilde \Vb_{\cS_{\bbeta}, \cS_{\bbeta}})$. Then we have $
|\gamma_1 -  \frac{C_V}{2}| \leq \frac{C_V}{40}$; Furthermore if $\gamma_2$ is the second largest in magnitude eigenvalue of $\widetilde \Vb_{\cS_{\bbeta}, \cS_{\bbeta}}$, we have $|\gamma_2 | \leq \frac{C_V}{40}$, and hence $\gamma_1 > |\gamma_2|$.
\item[(b)] The corresponding principal eigenvector of $\widetilde \Vb_{\cS_{\bbeta}, \cS_{\bbeta}}$ --- $\widetilde \bz_{\cS_{\bbeta}}$\footnote{Here we mean the principal eigenvector oriented so that $\widetilde \bz_{\cS_{\bbeta}}\T \bbeta_{\cS_{\bbeta}} \geq 0$.} satisfies the following inequality:
$$\left \|\widetilde  \bz_{\cS_{\bbeta}} - \bbeta_{\cS_{\bbeta}}\right \|_{\infty} \leq \frac{1}{2\sqrt{s}}.$$
\end{itemize}
\end{lemma}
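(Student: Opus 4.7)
My plan is to treat this as a standard rank-one perturbation problem, where the signal matrix $\Ab := \frac{C_V}{2}\bbeta_{\cS_{\bbeta}}\bbeta_{\cS_{\bbeta}}\T$ has spectrum $\{C_V/2, 0, \ldots, 0\}$ (since $\bbeta_{\cS_{\bbeta}}$ is a unit vector) with principal eigenvector $\bbeta_{\cS_{\bbeta}}$. The first step is to upgrade the entrywise bound on $\Nb$ to an operator-norm bound: since $\Nb$ is symmetric, $\|\Nb\|_{2,2} \leq \|\Nb\|_{\infty,\infty} \leq s\|\Nb\|_{\max} \leq C_V/40$. Weyl's inequality then immediately yields part (a): $|\gamma_1 - C_V/2| \leq \|\Nb\|_{2,2} \leq C_V/40$ and $|\gamma_k - 0| \leq C_V/40$ for all $k \geq 2$, so in particular $\gamma_1 \geq 19 C_V/40 > C_V/40 \geq |\gamma_2|$.

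For part (b), I would start from the eigenvalue equation $\gamma_1 \widetilde \bz_{\cS_{\bbeta}} = \frac{C_V}{2} \alpha\, \bbeta_{\cS_{\bbeta}} + \Nb \widetilde \bz_{\cS_{\bbeta}}$, where $\alpha := \bbeta_{\cS_{\bbeta}}\T \widetilde \bz_{\cS_{\bbeta}} \geq 0$ by the convention fixing the sign of $\widetilde \bz$. Solving for $\widetilde \bz_{\cS_{\bbeta}}$ and subtracting $\bbeta_{\cS_{\bbeta}}$ gives the clean decomposition
\[
\widetilde \bz_{\cS_{\bbeta}} - \bbeta_{\cS_{\bbeta}} = \Big(\tfrac{C_V \alpha}{2 \gamma_1} - 1\Big)\bbeta_{\cS_{\bbeta}} + \tfrac{1}{\gamma_1}\Nb \widetilde \bz_{\cS_{\bbeta}}.
\]
Since $|\bbeta_j| = 1/\sqrt{s}$ on $\cS_{\bbeta}$, the first piece contributes $|\tfrac{C_V \alpha}{2 \gamma_1} - 1|/\sqrt{s}$ to the $\ell_\infty$ norm, while the second obeys $\|\Nb \widetilde\bz\|_\infty \leq \|\Nb\|_{\max}\|\widetilde\bz\|_1 \leq \sqrt{s}\|\Nb\|_{\max}$ using Cauchy--Schwarz and the fact that $\widetilde \bz$ is a unit vector.

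The one quantity left to control is $\alpha$, and I would bypass Davis--Kahan by using a self-consistency trick: taking the inner product of the eigenvalue equation with $\widetilde \bz_{\cS_{\bbeta}}$ gives $\gamma_1 = \frac{C_V}{2}\alpha^2 + \widetilde \bz_{\cS_{\bbeta}}\T \Nb \widetilde \bz_{\cS_{\bbeta}}$, so
\[
\alpha^2 \;=\; \tfrac{2}{C_V}\big(\gamma_1 - \widetilde \bz_{\cS_{\bbeta}}\T \Nb \widetilde \bz_{\cS_{\bbeta}}\big) \;\geq\; \tfrac{2}{C_V}\big(\tfrac{19 C_V}{40} - \tfrac{C_V}{40}\big) \;=\; \tfrac{9}{10}.
\]
Combining $\alpha \geq 3/\sqrt{10}$ with $\gamma_1 \in [19 C_V/40,\, 21 C_V/40]$ and checking the two extreme ratios gives $|\tfrac{C_V \alpha}{2\gamma_1} - 1| \leq 1/10$, while the second piece is bounded by $\sqrt{s}\|\Nb\|_{\max}/\gamma_1 \leq (\sqrt{s}\cdot C_V/(40 s))/(19 C_V/40) = 1/(19\sqrt{s})$. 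Adding the two contributions gives well under $1/(2\sqrt{s})$, as required.

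The only real obstacle is bookkeeping the numerical constants: the hypothesis $\|\Nb\|_{\max} \leq C_V/(40 s)$ is sharp enough that one has to track the ranges of $\gamma_1$ and $\alpha$ through the quotient $C_V\alpha/(2\gamma_1)$ carefully. Once the operator-norm upgrade $\|\Nb\|_{2,2} \leq C_V/40$ is in hand and the self-consistency identity for $\alpha^2$ is used, the numerical budget is comfortable and the claimed $\ell_\infty$ bound follows without further technicality.
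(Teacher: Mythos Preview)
Your proposal is correct and follows essentially the same approach as the paper for part~(a): upgrade $\|\Nb\|_{\max}$ to an operator-norm bound $\|\Nb\|_{2,2}\leq C_V/40$ and apply Weyl's inequality. For part~(b), the paper does not write out the details --- it simply records the companion bound $\|\Nb\|_{\infty,\infty}\leq C_V/40$ and defers to Lemma~6 of \cite{amini2008high} --- whereas you supply a self-contained argument via the eigenvalue equation and the self-consistency identity $\gamma_1 = \tfrac{C_V}{2}\alpha^2 + \widetilde\bz^{\intercal}\Nb\widetilde\bz$ to control $\alpha$ directly; this is exactly the kind of elementary computation that underlies the cited lemma, and your numerical bookkeeping checks out comfortably, so in content the two proofs coincide.
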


\begin{proof} This Lemma is a re-statement of Lemma 6 from \cite{amini2008high}, the difference being that we require precise bounds, rather than considering simply the asymptotics. To see part (a), observe that (\ref{Nmatrixver}) implies:
\begin{align} \label{22bound}
\|\Nb\|_{2, 2} = \sup_{\|\bv\|_2 = 1} |\bv\T \Nb \bv| \leq \|\bv\|_1^2 \|\Nb\|_{\max} \leq \frac{C_V}{40}.
\end{align}
Hence by Weyl's inequality we have that:
$$
|\gamma_1 - \frac{C_V}{2}| \leq \frac{C_V}{40}, \quad |\gamma_2 - 0| \leq \frac{C_V}{40},
$$
which is exactly part (a). For the second part observe that
\begin{align} \label{inftyinftybound}
\|\Nb\|_{\infty,\infty} = \max_{i} \sum_{j} |\Nb_{ij}| \leq  s \frac{C_V}{40 s} = \frac{C_V}{40}.
\end{align}
The proof of part (b) can then be seen as in Lemma 6 of \cite{amini2008high}, by carefully exploiting (\ref{22bound}) and (\ref{inftyinftybound}).

\end{proof}

\begin{proof}[Proof of Proposition \ref{lowerboundprop}] 
The proof is based on an application of Fano's inequality, which in turn is a standard approach for showing minimax lower bounds (e.g. see \cite{cover2012elements,wainwright2009information, yang1999information, yu1997assouad} among others). In particular we base our proof on ideas from \cite{amini2008high, wainwright2009information}.

For two probability measures $P$ and $Q$, which are absolutely continuous with respect to a third probability measure $\mu$ define their KL divergence by $D(P \| Q) = \int p \log \frac{p}{q} d\mu$, where $p = \frac{dP}{d\mu}$, $q = \frac{dQ}{d\mu}$. We proceed with the following lemma:

\begin{lemma} \label{lowerboundthm} Let us have $n$ observations from a SIM $Y = f(\bX\T \bbeta, \varepsilon), \bX \sim N_p(0,\mathbb{I}_p)$. Assume that for any fixed $u, v \in \mathbb{R}$ the following regularity condition holds for the random variables $f(u,\varepsilon)$ and $f(v,\varepsilon)$:
\begin{align}
D(p(f(u, \varepsilon)) \| p(f(v, \varepsilon))) \leq \exp(\Xi(u - v)^2) - 1, \label{KLdivcond} 
\end{align}
where $\Xi$ is a positive constant. Then if
$$
n < \frac{s\log(p-s + 1)}{8 \Xi},
$$
and $s \geq 8 \Xi$, any algorithm recovering the support in our model will have errors with probability at least $\frac{1}{2}$ asymptotically.
\end{lemma}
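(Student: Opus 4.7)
The plan is to obtain the lower bound via a standard Fano's inequality argument, with the single nontrivial input being the per-sample KL bound that exploits the Gaussianity of $\bX$ together with the postulated regularity condition (\ref{KLdivcond}).

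First I would restrict to a hypothesis subclass small enough to control pairwise KL but still large enough to force $\log |\mathcal T| \asymp \log(p-s+1)$. A convenient choice is to fix the first $s-1$ coordinates of the support and let the $s$\textsuperscript{th} support element vary: for $j \in \{s, s+1, \dots, p\}$ define $\bbeta^{(j)}$ with $\bbeta^{(j)}_i = 1/\sqrt{s}$ for $i \in \{1, \dots, s-1\} \cup \{j\}$ and zero elsewhere. This gives $p-s+1$ candidate vectors, any two of which differ in exactly two coordinates, so $\|\bbeta^{(j)} - \bbeta^{(j')}\|_2^2 = 2/s$ whenever $j \neq j'$. Sampling $J$ uniformly and then generating $D = (Y_i, \bX_i)_{i=1}^n$ from the SIM with $\bbeta = \bbeta^{(J)}$, any support-recovery procedure induces an estimator $\hat J$, so Fano's inequality gives
\begin{equation*}
P(\hat J \neq J) \;\geq\; 1 - \frac{I(J; D) + \log 2}{\log(p-s+1)}.
\end{equation*}

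Second I would bound the mutual information via $I(J; D) \leq \max_{j \neq j'} n\, D(P_{\bbeta^{(j)}} \| P_{\bbeta^{(j')}})$ and reduce the per-sample KL to a single Gaussian moment computation. Since $\bX \sim N_p(0, \mathbb{I}_p)$ has the same marginal under every hypothesis, the chain rule for KL strips the $\bX$-marginal and leaves
\begin{equation*}
D(P_{\bbeta} \| P_{\bbeta'}) \;=\; \e_{\bX}\bigl[D\bigl(p(f(\bX\T \bbeta, \varepsilon)) \,\big\|\, p(f(\bX\T \bbeta', \varepsilon))\bigr)\bigr] \;\leq\; \e_{\bX}\!\left[\exp\!\bigl(\Xi (\bX\T(\bbeta - \bbeta'))^2\bigr) - 1\right],
\end{equation*}
by (\ref{KLdivcond}). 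The random variable $\bX\T(\bbeta - \bbeta')$ is centered Gaussian with variance $\sigma^2 = 2/s$, so the expectation is the MGF of a scaled $\chi^2_1$, namely $(1 - 2\Xi \sigma^2)^{-1/2} - 1$. The hypothesis $s \geq 8\Xi$ guarantees $2\Xi \sigma^2 = 4\Xi/s \leq 1/2$, which keeps the MGF finite and, via the elementary inequality $(1-x)^{-1/2} - 1 \leq 2x$ valid on $[0, 1/2]$, yields a clean per-sample bound of order $\Xi/s$.

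Third I would plug this into Fano: $P(\hat J \neq J) \geq 1 - \bigl(C n \Xi/s + \log 2\bigr)/\log(p-s+1)$, which is at least $1/2$ as soon as $n \leq c\, s \log(p-s+1)/\Xi$ for an appropriate absolute constant $c$; tightening the chi-square MGF bound and the Fano constant suffices to reach the stated $s \log(p-s+1)/(8\Xi)$. The main obstacle in this plan is essentially bookkeeping: ensuring the constants in the chi-square MGF estimate line up with the advertised threshold $s \geq 8\Xi$, and verifying that the chain-rule reduction is really just expectation over $\bX$ (which does use crucially that the $\bX$-marginal is hypothesis-free). Everything else is standard Fano, and once the per-sample KL estimate $D(P_{\bbeta} \| P_{\bbeta'}) \lesssim \Xi/s$ is in hand, the conclusion is immediate.
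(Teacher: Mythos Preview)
Your proposal is correct and follows essentially the same route as the paper: restrict to the subclass of supports containing $[s-1]$ plus one varying coordinate, apply Fano, bound the mutual information by pairwise KL, reduce the per-sample KL to $\e\bigl[\exp(\Xi(U-V)^2)-1\bigr]$ with $U-V\sim N(0,2/s)$ via the hypothesis-free $\bX$-marginal, and evaluate the Gaussian MGF. The only cosmetic differences are that the paper phrases the $\bX$-reduction as a change of variables $(U,V,\bW)$ rather than the KL chain rule, uses the slightly sharper inequality $(1-x)^{-1/2}-1\le x$ on $[0,1/2]$ (which is what you need to hit the constant $8\Xi$ exactly), and appends a counting argument to upgrade the subclass bound to the uniform-over-$\mathbb{S}$ average error; for the minimax statement your subclass bound already suffices.
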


To finish the proof simply apply Example \ref{lastexample} with $P(x) = x/(2\sigma^2)$ and $G = h = Id$.
\end{proof} 

\begin{proof}[Proof of Lemma \ref{lowerboundthm}] For simplicity of the exposition we will assume that the vector $\bbeta$ has only non-negative entries (i.e. all non-zero entries are $\frac{1}{\sqrt{s}}$). The proof extends in exactly the same way in the case when the entries of $\bbeta$ are not restricted to be positive. 

Let $\mathbb{S} \subset 2^{[p]}$, the set of all subsets of $[p]$ with $s$ elements. Clearly, $|\mathbb{S}| = {p \choose s}$.  Let $\widehat{S} : (\mathbb{R}^{p + 1})^n \rightarrow \mathbb{S}$ be any potentially random function, which is used to recover the support of $\bbeta$, based on the sample $\{(Y_i, \bX_i)\}_{i = 1}^n$. Under the 0-1 loss the risk equals the probability of error:
\begin{align}\frac{1}{{p \choose s}} \sum_{\cS_{\bbeta} \in \mathbb{S}}P_{\cS_{\bbeta}} (\widehat{S}\neq \cS_{\bbeta}), \label{errorsum}
\end{align}
where by $P_{\cS_{\bbeta}}$ we are measuring the probability under a dataset generated with $\supp(\bbeta)$ equal to the index of the measure $P_{\cS_{\bbeta}}$.

Instead of directly dealing with the sum above, we first consider the $p-s+1$ element set $\widetilde{\mathbb{S}} = \{S \in \mathbb{S} : [s-1] \subset S, |S| = s \}$, and we bound the probability of error, on any function $\widehat{S}$ (even if given the knowledge that the true support is drawn from $\widetilde{\mathbb{S}}$). Let $J$ be a uniformly distributed in $\widetilde{\mathbb{S}}$. By Fano's inequality that:
\begin{align}\label{fanosineq}
\p(\mbox{error}) \geq 1 - \frac{\calI(J; (Y,\bX)^n) + \log(2)}{\log|\widetilde{\mathbb{S}}|},
\end{align}
where $\calI(J; (Y,\bX)^n)$ is the mutual information between the sample $J$ and the sample $(Y,\bX)^n$. Note now that for the mutual information we have 
\begin{align*}\calI(J; (Y,\bX)^n) & = \calI(J; (f(\bX\T\bbeta, \varepsilon) ,\bX)^n) \\
& \leq n \calH((f(\bX\T\bbeta, \varepsilon),\bX)) - n \calH((f(\bX\T\bbeta, \varepsilon),\bX) | J),\footnotemark
\end{align*}
\footnotetext{Here we use $\calH$ to denote the entropy, not to be confused with the number of slices $H$.}where the last inequality follows from the chain inequality of entropy. We therefore need an upper bound on the last expression.

Let $i \geq s$ and set $\bbeta^{i} := (\beta_1^i, \beta_2^i, \ldots, \beta_p^i)\T$ the vector such that that $\beta_j^i = \frac{\mathbbm{1}(j \in [s-1]) + \mathbbm{1}(j = i)}{\sqrt{s}}$. When $J$ is unknown the distribution is a mixture, and hence due to the convexity of $-\log$ we have the following inequality:
\begin{align*}
&\calH((f(\bX\T\bbeta, \varepsilon), \bX)) - \calH((f(\bX\T\bbeta, \varepsilon), \bX)) | J) \\
&\leq \frac{1}{(p - s + 1)^2}\sum_{i, j \geq s} p((f(\bX\T\bbeta^{i}, \varepsilon), \bX)) \log \frac{p(f(\bX\T\bbeta^{i}, \varepsilon), \bX)}{p(f(\bX\T\bbeta^{j}, \varepsilon), \bX)}\\
& = \frac{p - s}{p - s + 1}D(p((f(\bX\T\bbeta^{s}, \varepsilon), \bX)) \| p((f(\bX\T\bbeta^{s + 1}, \varepsilon), \bX))),
\end{align*}
where the last equality follows by a symmetric argument. Since the KL divergence is invariant under changing variables, setting $U = \bX\T \bbeta^s$, $V = \bX\T \bbeta^{s + 1}$ and $\bW = \bP_{\{\bbeta^s, \bbeta^{s + 1}\}^\perp}\bX$, where  $ \bP_{\{\bbeta^s, \bbeta^{s + 1}\}^\perp}$ denotes the orthogonal projection on the space $\operatorname{span}\{\bbeta^s, \bbeta^{s + 1}\}^\perp$. We get:
\begin{align*}
& \calH((f(\bX\T\bbeta, \varepsilon), \bX)) - \calH((f(\bX\T\bbeta, \varepsilon), \bX)) | J)  \\
& \leq \frac{p - s}{p - s + 1}D(p((f(U, \varepsilon), U, V)) \| p((f(V, \varepsilon), U, V))),
\end{align*}
where we used the fact that $\bW$ is independent of $U, V, \varepsilon$. Applying assumption (\ref{KLdivcond}) we get:
\begin{align*}
D(p((f(U, \varepsilon), U, V)) \| p((f(V, \varepsilon), U, V))) & \leq \e \exp(\Xi(U - V)^2) - 1 \\
& = \sqrt{\frac{s}{s - 4\Xi}} - 1 \leq \frac{4\Xi}{s}.
\end{align*}
where the first inequality can be obtained by conditioning on $U, V$, in the equality we used the fact that $U - V \sim N(0, \frac{2}{s})$, and we assume that the value of $s$ is large enough so that $s \geq 8\Xi$. The inequality in the preceding display helps us to conclude that for large values of $s$:
$$
\calI(J; (Y,\bX)^n) \leq \frac{4\Xi(p - s)n}{s(p - s + 1)} < \frac{4\Xi n}{s}.
$$
Consequently by (\ref{fanosineq}) if $n < \frac{s\log(p-s + 1)}{8\Xi}$ we will have errors with probability at least $\frac{1}{2}$, asymptotically. To finish the conclusion, note that the sum (\ref{errorsum}), can be split into ${p \choose s - 1}$ terms, by the following operation: 

\begin{enumerate}
\item Repeat each set in $\mathbb{S}$ --- $s$ times, and denote this superset by $s \times \mathbb{S}$
\item For each $S$ of the ${p \choose s - 1}$, subsets of $[p]$ with $s-1$ elements, collect $p-s + 1$ distinct elements of $s \times \mathbb{S}$ containing $S$
\item Apply the $\frac{1}{2}$ error bound obtained from above to this local sum.
\end{enumerate}

In the end we get that the probability of error by selecting $S \subset \mathbb{S}$ uniformly is at least: $\frac{1}{s}\frac{{p \choose s-1}}{{p \choose s}} (p - s + 1) \frac{1}{2} = \frac{1}{2}$.
\end{proof}

\section{SIM Examples}\label{app:examples}

In this section we look into models of the type $Y = G(h(\bX\T \bbeta) + \epsilon)$ and show that under certain sufficient conditions they belong to a class $\mathcal{F}_{A}$ for some $A$. In addition, we provide a sub-class of these models, in which support recovery is impossible with probability at least $\frac{1}{2}$ when $\Gamma$ is small.

\begin{example} \label{exmp2} For SIM $Y = G(h(\bX\T \bbeta) + \varepsilon)$ with strictly monotone $h$ and $G$ there exists $A > 0$ (depending on $h, G$) such that $\var(\e[\bX\T \bbeta | Y]) \geq A > 0$.
\end{example}
\begin{proof} We will show more generally that if $Y = f(\bX\T \bbeta, \varepsilon)$, where $f, \varepsilon$ satisfy the condition that the function $g(z) = \e[f(Z, \varepsilon) | Z = z]$ is strictly monotone, we have: $\var(\e[Z | f(Z, \varepsilon)]) > 0$. 

Note that the condition $\exists A > 0: \var(\e[Z | f(Z,\varepsilon)]) \geq A$ is equivalent $\e[Z | f(Z, \varepsilon)]$ to not being a constant. We argue that the latter is clearly implied if, for example, $\e[Z f(Z,\varepsilon)] \neq 0$. To see this assume the contrary, i.e. $\e[Z | f(Z,\varepsilon)] = 0$ a.s., but  $\e[Z f(Z,\varepsilon)] \neq 0$. Then we have $\e[Z f(Z,\varepsilon)] = \e[\e[Z | f(Z,\varepsilon)]f(Z,\varepsilon)] = 0$, which is a contradiction. 

Next we show that our condition implies $\e[Z f(Z,\varepsilon)] \neq 0$. WLOG assume that $g$ is strictly increasing. Observe that since $Z \sim N(0,1)$ is a continuous random variable, by Chebyshev's association inequality \citep{boucheron2013concentration} we have:
$$
\e[Z f(Z, \varepsilon)] = \e[Z g(Z)] > \e[Z] \e[g(Z)] = 0,
$$
and hence as we argued earlier it follows that $\var(\e[Z | f(Z, \varepsilon)]) > 0$. Due to the independence of $\bX$ and $\varepsilon$ models with a coordinate-wise monotone $f$ function (strictly monotone in the first coordinate) belong to this class. 
\end{proof}

\begin{example} \label{exmp3} Let $Y = G(h(\bX\T \bbeta) + \varepsilon)$, where $G, h$ are strictly monotone continuous functions. Furthermore let $\varepsilon$ be such that the function $\bm(y)$ is continuous and monotone in $y$. We argue that such models are sliced stable in the sense of Definition \ref{def:sliced_stable}. 
\end{example}

\begin{proof}
Since $\bm$ is a continuous and monotone function in $y$, it follows that it is of bounded variation on any closed interval, which in turn implies (\ref{msmoothness}). Furthermore, by Example \ref{exmp2}, we are guaranteed that $\sqrt{\var(\bm_j(Y))s} = C_V$ for some $C_V > 0$. Notice also that in the case of a continuous and monotone $\bm$ one can take $\widetilde \bm(y) := |\bm(y) - \bm(-y)| < |\bm(y)| + |\bm(-y)|$, for $y > 0$. Finally we need to show that $\e[|\widetilde \bm(|Y|)|^{(2 + \xi)}]$ is finite. Due to the last inequality it suffices to control:
\begin{align*}
C_V^{2 + \xi} \EE |\widetilde \bm(Y)|^{2 + \xi} &  \leq s^{(2 + \xi)/2} \EE |\EE[\bX^j | Y]|^{2 + \xi}  =s^{(2 + \xi)/2} \EE |\EE[s^{-1/2}\bX\T \bbeta |  Y]|^{2 + \xi}\\
& \leq s^{(2 + \xi)/2} s^{-(2 + \xi)/2} \EE \EE[|\bX\T \bbeta|^{2 + \xi} | Y]  = \EE |Z|^{2 + \xi} < \infty,
\end{align*}
where $Z \sim N(0,1)$ and in the first equality we used the fact that $\ind{\bX^j - s^{-1/2}\bX\T \bbeta}{\bX\T \bbeta}$ and hence $\EE[\bX^j - s^{-1/2}\bX\T \bbeta | Y] =\EE[\bX^j - s^{-1/2}\bX\T \bbeta]= 0$. This completes the proof.
\end{proof}

\begin{remark} \label{class:of:sliced:stable:models} \normalfont To see that the monotonicity condition on $\bm(y)$ is not vacuous, we give concrete examples of SIM satisfying it, which include the simple linear regression model (\ref{mod:linear}) as a special case. Consider the models from Example \ref{exmp3} and let $\varepsilon$ have a density function satisfying $p_\varepsilon(x) \varpropto \exp(-P(x^2))$, where $P$ is any nonzero polynomial with non-negative coefficients such that $P(0) = 0$. To see that $\bm$ is monotonic and continuous, we start by obtaining a precise expression of $\bm$. It is simple to see that:
$$
\bm(y) = \frac{\e[Z | h(Z) + \varepsilon = G^{-1}(y)]}{\sqrt{\var(\bm_j(Y))s}},
$$
for almost every $y \in \supp(Y)$. By Example \ref{exmp2}, we have that $\sqrt{\var(\bm_j(Y))s} = C_V$ for some $C_V > 0$. Next we argue that $y \mapsto \bm(y)$ is monotone. This is equivalent to showing that $\bm(G(y))$ is monotone. To this end we apply Lemma A.2 of \cite{duan1991slicing}, which implies that it suffices for the random variable $h(Z) + \varepsilon|Z$ to have a monotone likelihood ratio in order for $\bm(G(y))$ to be monotone. Since the family of random variables $h(z) + \varepsilon$ is a location family, the normalizing constants of their densities do not change with $z$. This in conjuction with the fact that $h$ is increasing, implies that the monotonicity of the likelihood ratio will be implied if we show that the function $x \mapsto P(x^2) - P((x - c)^2)$ is increasing in $x$ for any fixed $c > 0$. Notice that since $P(x^2)$ is a differentiable convex function by construction, we have that  $P(x^2) - P((x - c)^2) \geq c \frac{d P(y^2)}{dy} \vert_{y = x-c} > 0$. It is worth noting that the same argument applies more generally to the case where $\varepsilon$ is a log-concave random variable (i.e. $p_\varepsilon(x) = \exp(-\varphi(x))$ where $\varphi$ is a convex function). The fact that $\bm$ is continuous follows by the continuity of $G$ and $h$.
\end{remark}

Finally, with the help of Lemma \ref{lowerboundthm} we demonstrate that some models discussed in Remark \ref{class:of:sliced:stable:models} meet the information theoretic barrier described in Proposition \ref{lowerboundprop}, and hence their support cannot be recovered by any algorithm unless $\Gamma$ is large enough.

\begin{example} \label{lastexample} 
Let $Y = G(h(\bX\T \bbeta) + \varepsilon)$, where $G, h$ are strictly monotone continuous functions and in addition $h$ is an $L$-Lipschitz function. Furthermore let $\varepsilon$ has a density as specified in Remark \ref{class:of:sliced:stable:models}, i.e. $p_\varepsilon(x) \varpropto \exp(-P(x^2))$, where $P$ is any nonzero polynomial with non-negative coefficients such that $P(0) = 0$. Then if $n < \frac{s\log(p-s + 1)}{C}$ for some constant $C > 0$ (depending on $P, G, h$) and $s$ sufficiently large, any algorithm recovering the support in our model will have errors with probability at least $\frac{1}{2}$ asymptotically.
\begin{proof} Note that all moments of the random variable $\varepsilon$ exist. Next we verify that condition (\ref{KLdivcond}) holds in this setup. Since $G$ is 1-1 and KL divergence is invariant under changes of variables WLOG we can assume our model is simply $Y = h(\bX\T \bbeta) + \varepsilon$ or in other words $f(u, \varepsilon) = h(u) + \varepsilon$. This is a location family for $u \in \RR$ and thus the normalizing constant of the densities will stay the same regardless of the value of $u$. Direct calculation yields:
\begin{align*}
D(p(f(u, \varepsilon)) \| p(f(v, \varepsilon))) & = \e [P((\xi + h(u) - h(v))^2) - P(\xi^2)] \\
& = \widetilde{P}((h(u) - h(v))^2),
\end{align*}
where $\xi$ has a density $p_{\xi}(x) \varpropto \exp(-P(x^2))$, and $\widetilde{P}$ is another nonzero polynomial with nonnegative coefficients, with $\widetilde{P}(0) = 0$ of the same degree as $P$. The last equality follows from the fact that all odd moments of $\xi$ are $0$, since $\xi$ is a symmetric about $0$ distribution. Since $h$ is $L$-Lipschitz we conclude that:
\begin{align*}
D(p(f(u, \varepsilon)) \| p(f(v, \varepsilon))) & \leq \widetilde{P}(L^2(u - v)^2).
\end{align*}
The last can be clearly dominated by $\exp(C(u-v)^2) - 1$ for a large enough constant $C$.
\end{proof}
\end{example}

\bibliographystyle{agsm}
\bibliography{support_recovery_SIR}

\section{Extra Numerical Studies Figures} \label{app:extra:fig}

\begin{figure}[H]\label{log:DT:SIR}\caption{Efficiency Curves for DT-SIR, $s = \log{p},~ \Gamma = \frac{n}{s\log(p-s)} \in [0,30]$}  \label{log:DT:SIR}
\begin{subfigure}{.5\textwidth}
  \centering
  \includegraphics[width=.8\linewidth]{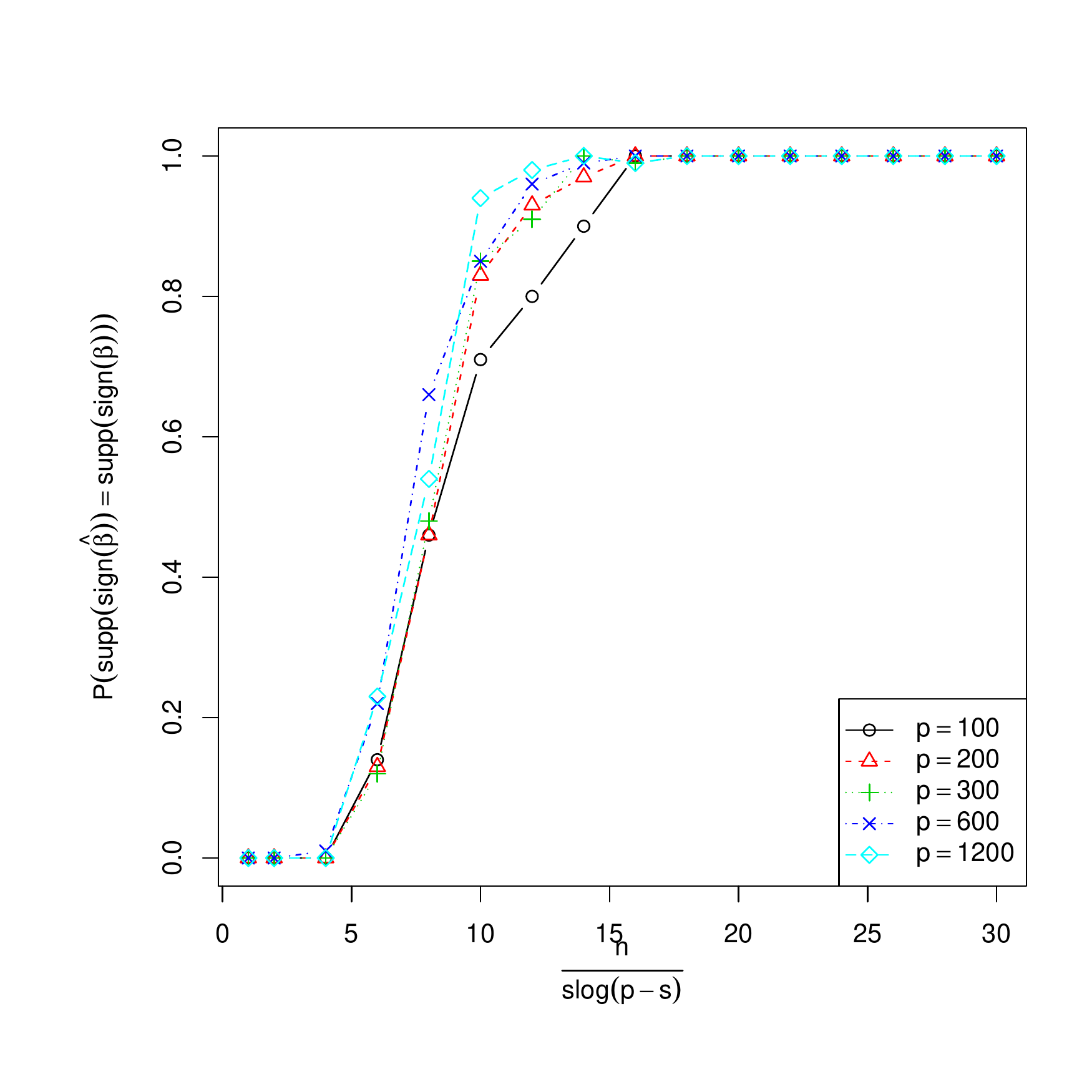}
  \caption{Model (\ref{sinmodel})}
  \label{fig:sfig1}
\end{subfigure}%
\begin{subfigure}{.5\textwidth}
  \centering
  \includegraphics[width=.8\linewidth]{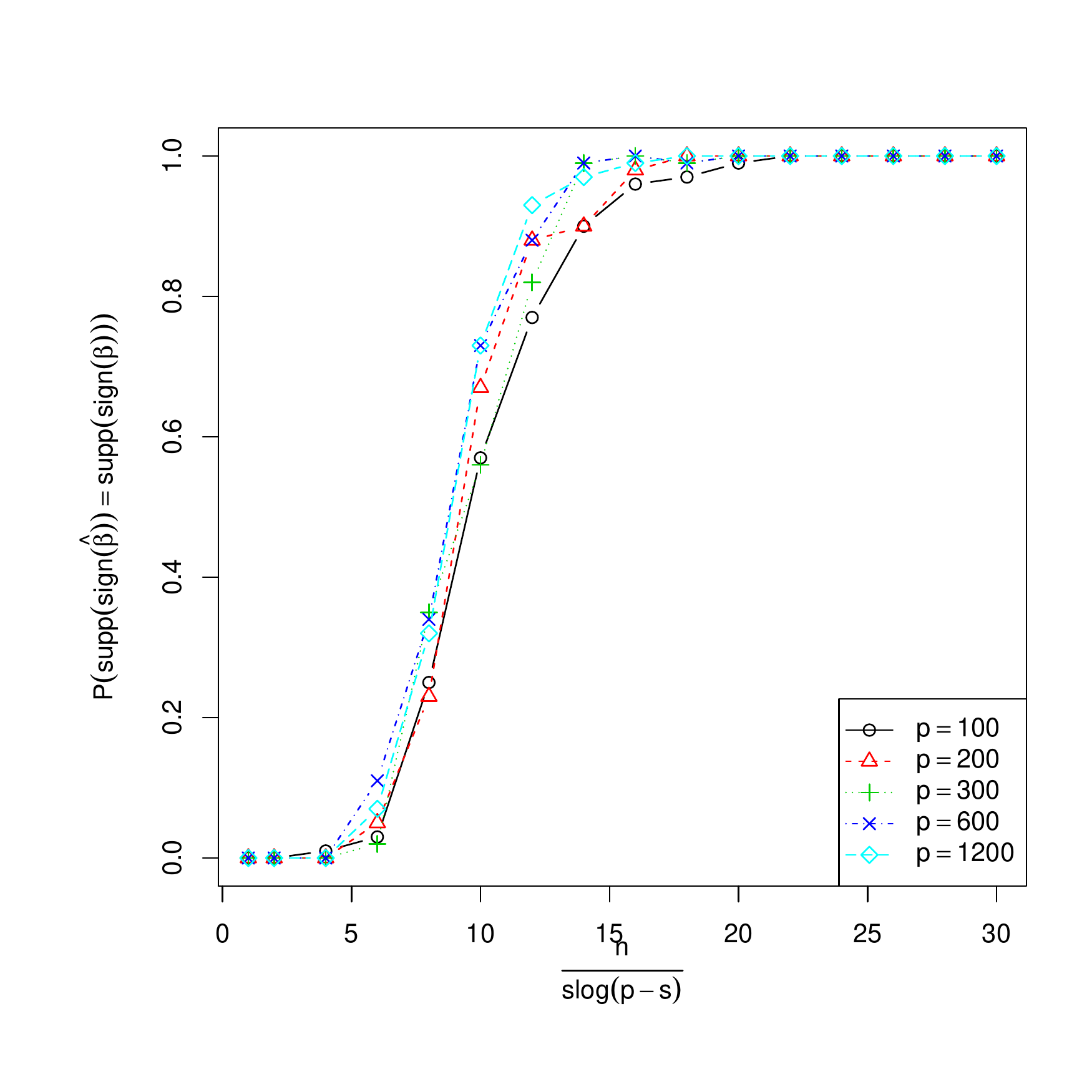}
  \caption{Model (\ref{X3model})}
  \label{fig:sfig2}
\end{subfigure}%

\begin{subfigure}{.5\textwidth}
  \centering
  \includegraphics[width=.8\linewidth]{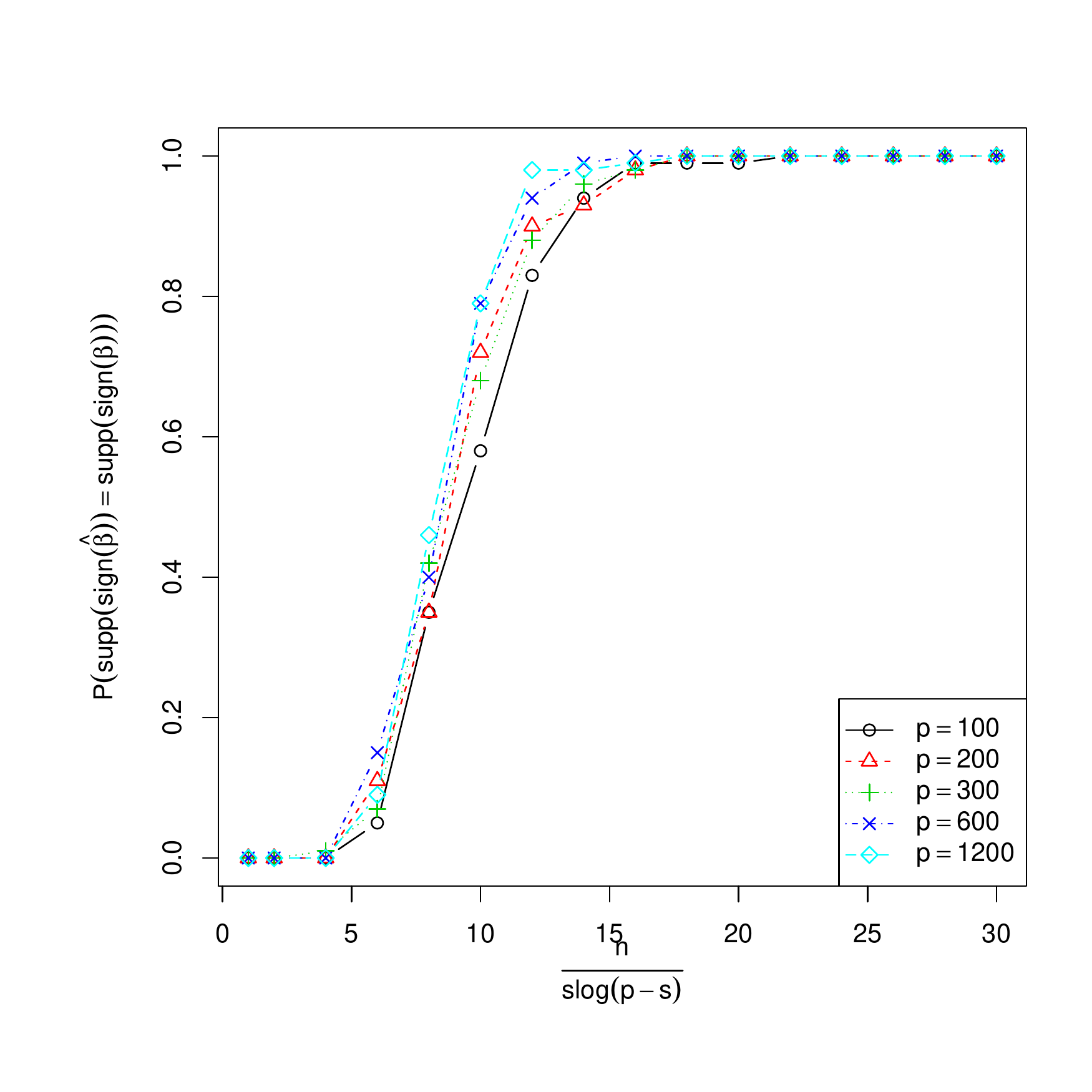}
  \caption{Model (\ref{Xe3model})}
  \label{fig:sfig3}
\end{subfigure}%
\begin{subfigure}{.5\textwidth}
  \centering
  \includegraphics[width=.8\linewidth]{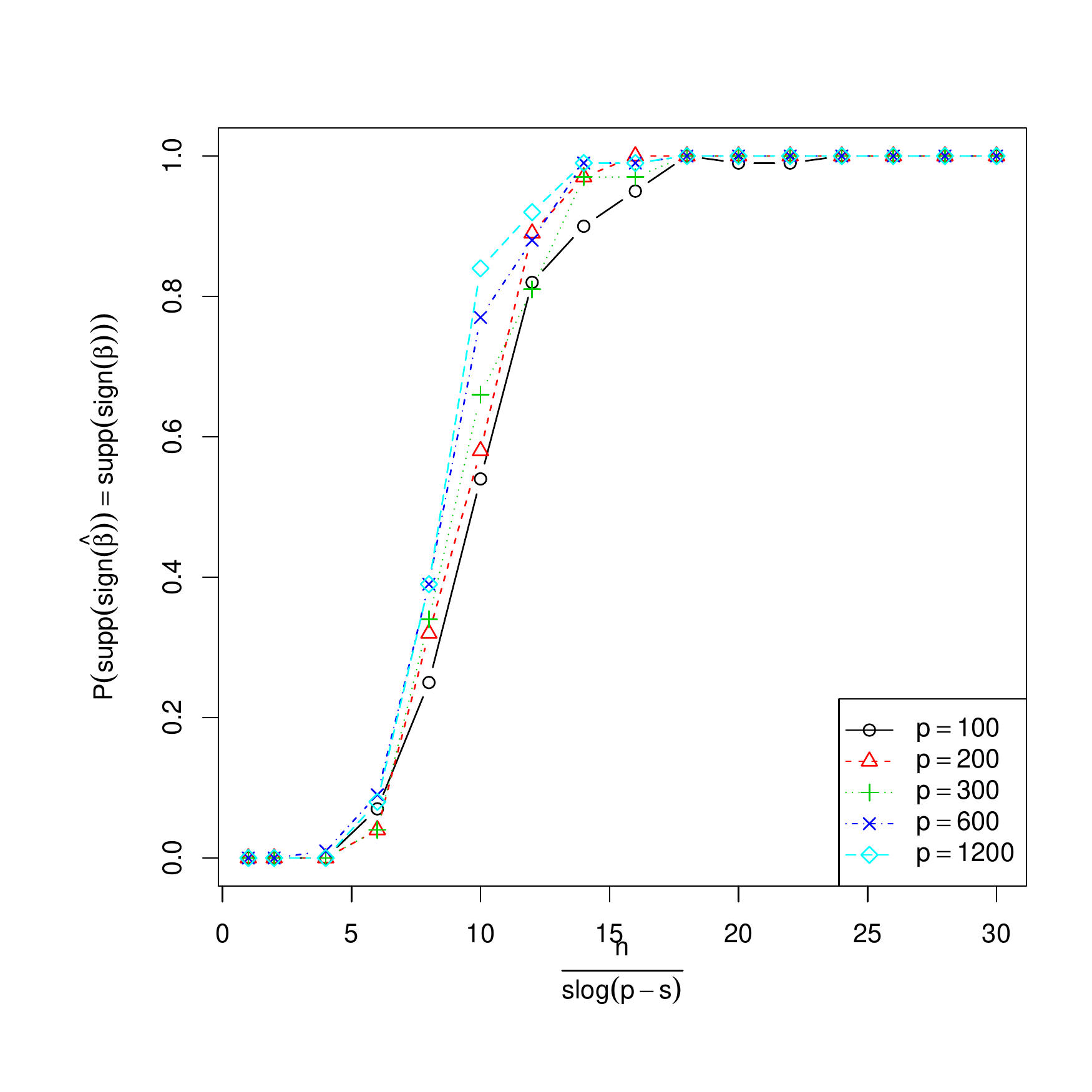}
  \caption{Model (\ref{regmodel})}
  \label{fig:sfig4}
\end{subfigure}%
\end{figure}

\begin{figure}[H]\label{log:SDP}\caption{Efficiency Curves for SDP, $s = \log{p},~ \Gamma = \frac{n}{s\log(p-s)} \in [0,40]$}  \label{log:SDPfigureslog}
\begin{subfigure}{.5\textwidth}
  \centering
  \includegraphics[width=.8\linewidth]{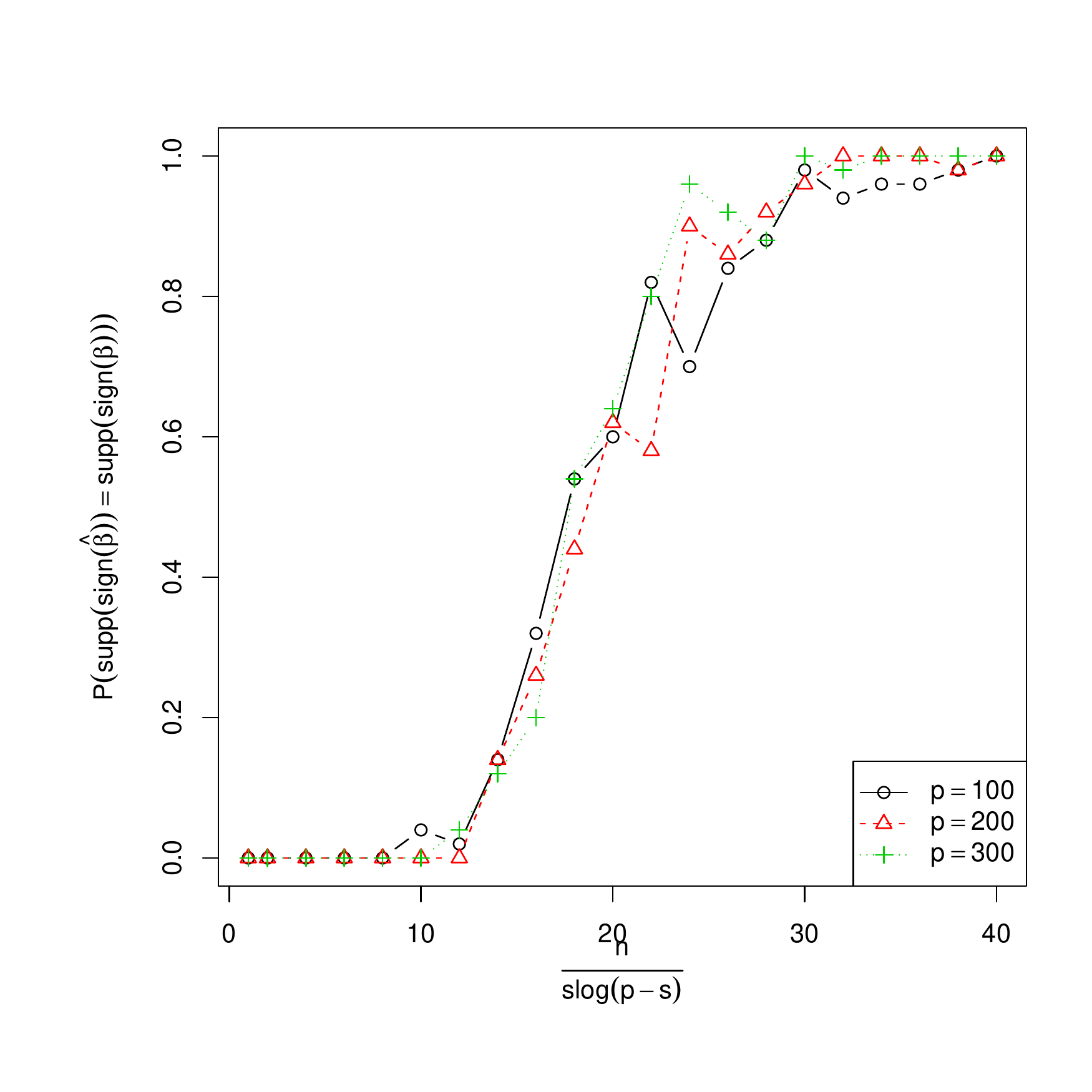}
  \caption{Model (\ref{sinmodel})}
  \label{fig:sfig1}
\end{subfigure}%
\begin{subfigure}{.5\textwidth}
  \centering
  \includegraphics[width=.8\linewidth]{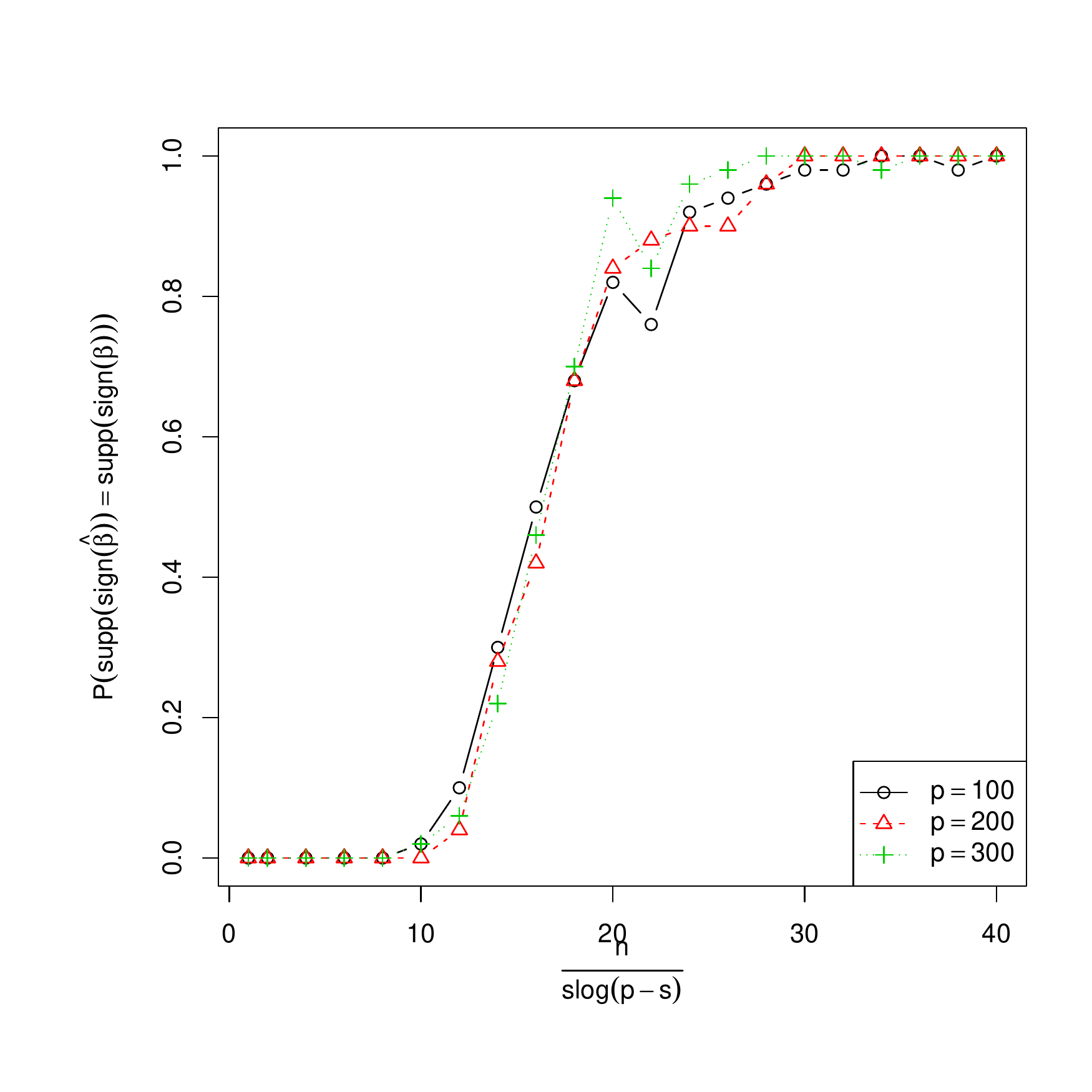}
  \caption{Model (\ref{X3model})}
  \label{fig:sfig2}
\end{subfigure}%

\begin{subfigure}{.5\textwidth}
  \centering
  \includegraphics[width=.8\linewidth]{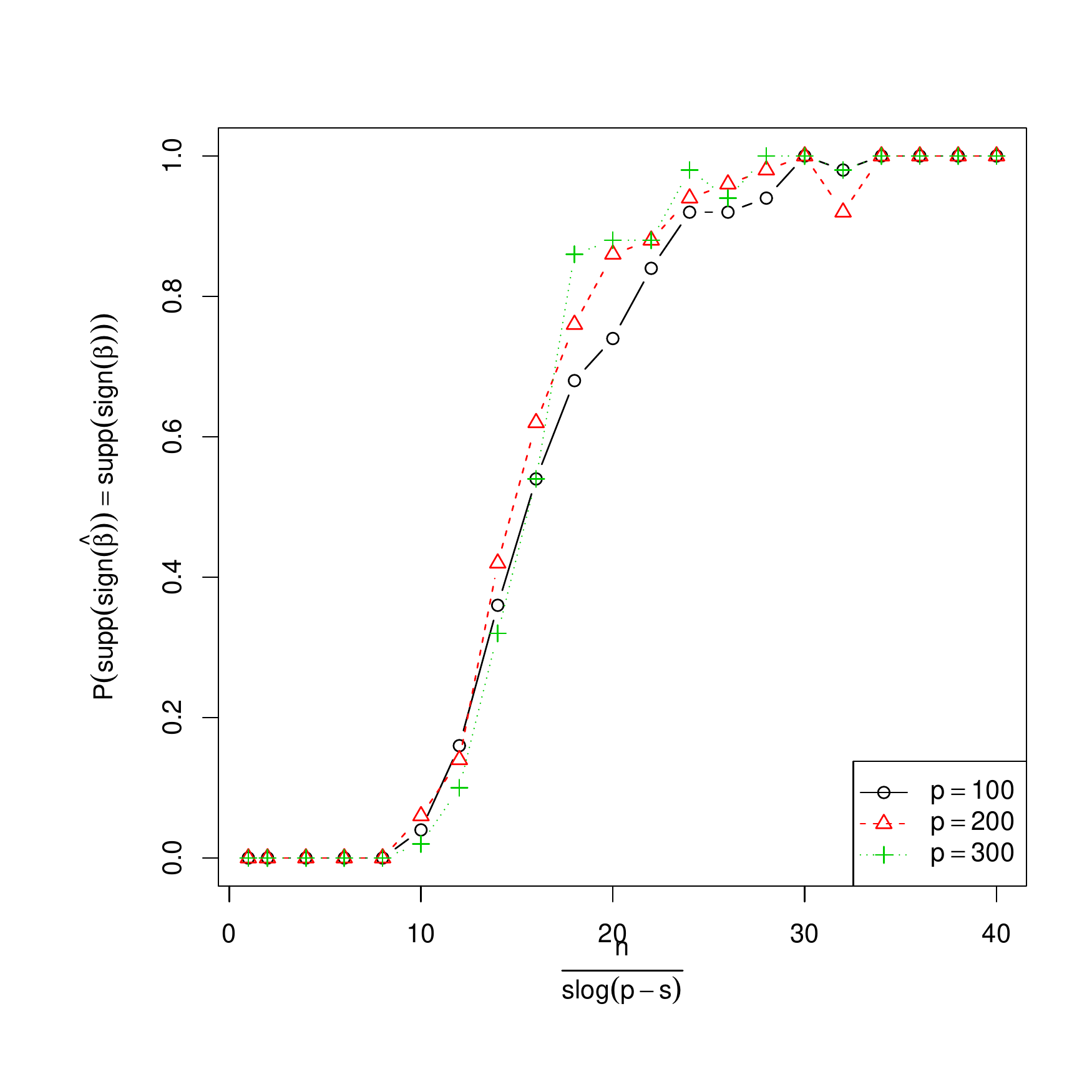}
  \caption{Model (\ref{Xe3model})}
  \label{fig:sfig3}
\end{subfigure}%
\begin{subfigure}{.5\textwidth}
  \centering
  \includegraphics[width=.8\linewidth]{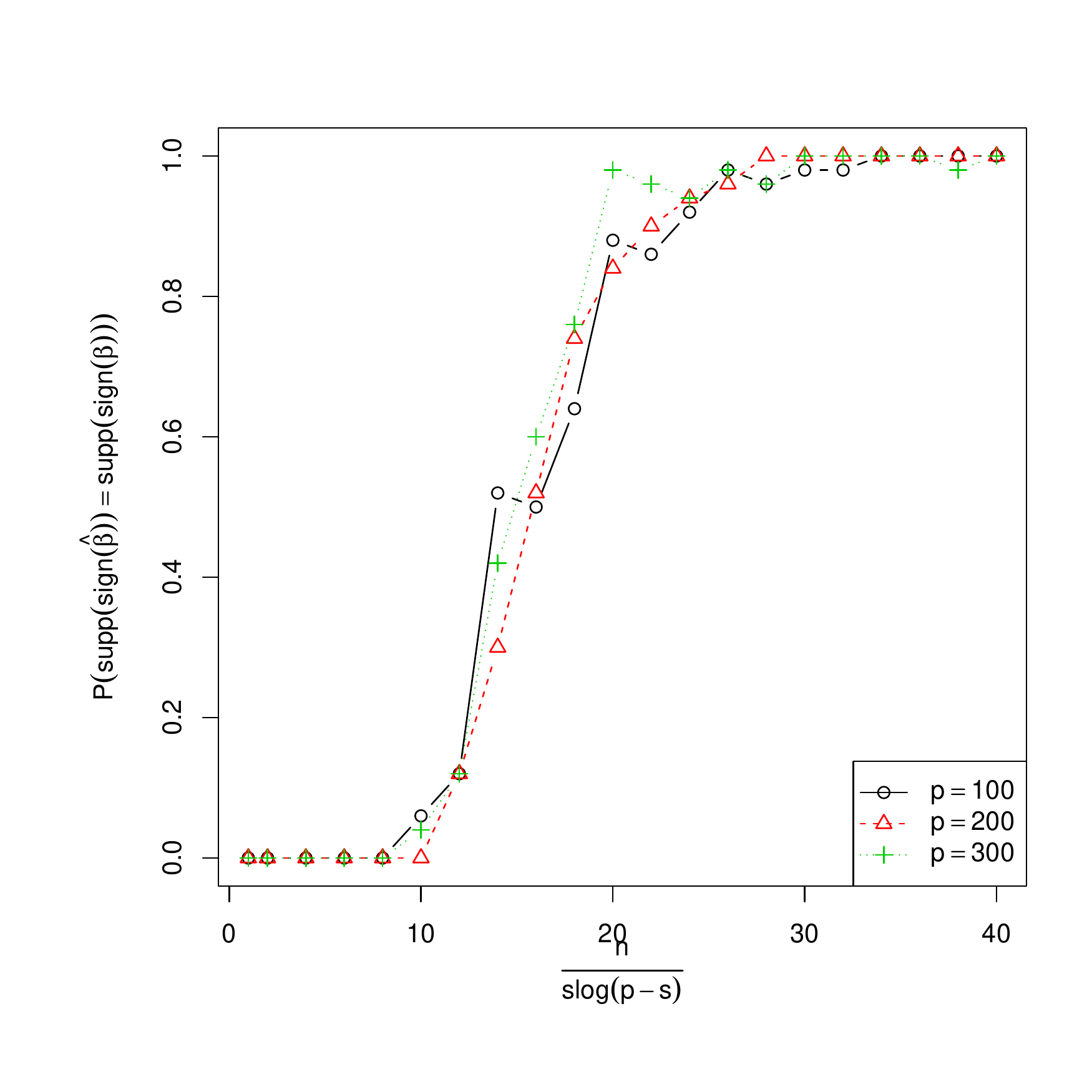}
  \caption{Model (\ref{regmodel})}
  \label{fig:sfig4}
\end{subfigure}%
\end{figure}

\begin{figure}[H]\caption{Efficiency Curves for DT-SIR, $s = \sqrt{p},~ \Gamma = \frac{n}{s\log(p-s)} \in [0,30]$, $\bbeta = \widetilde \bbeta/\|\widetilde \bbeta\|_2$ according to (\ref{random:beta:specification})}  
\label{sqrt:DT:random:beta}
\begin{subfigure}{.5\textwidth}
  \centering
  \includegraphics[width=.8\linewidth]{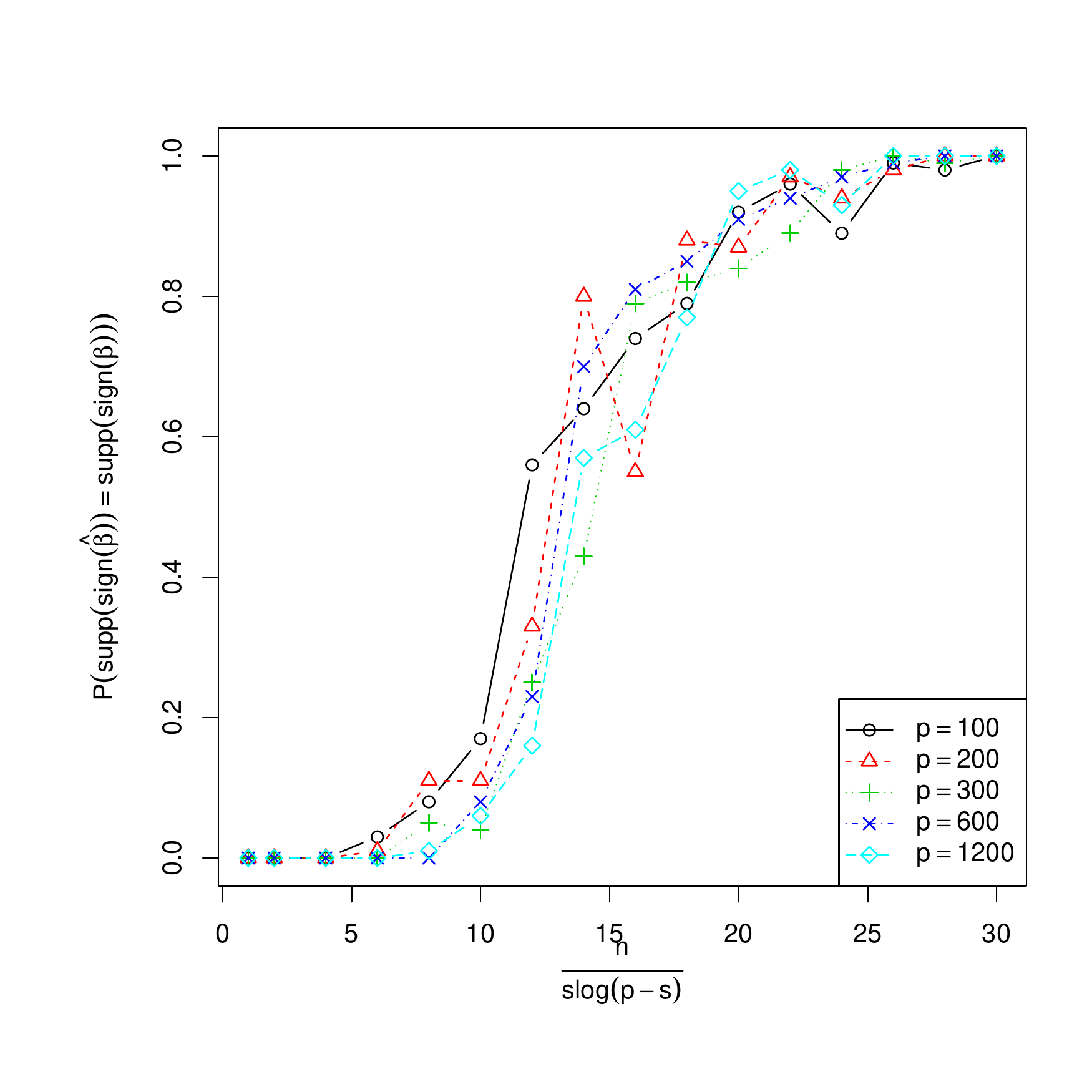}
  \caption{Model (\ref{sinmodel})}
  \label{fig:sfig1:random:beta}
\end{subfigure}%
\begin{subfigure}{.5\textwidth}
  \centering
  \includegraphics[width=.8\linewidth]{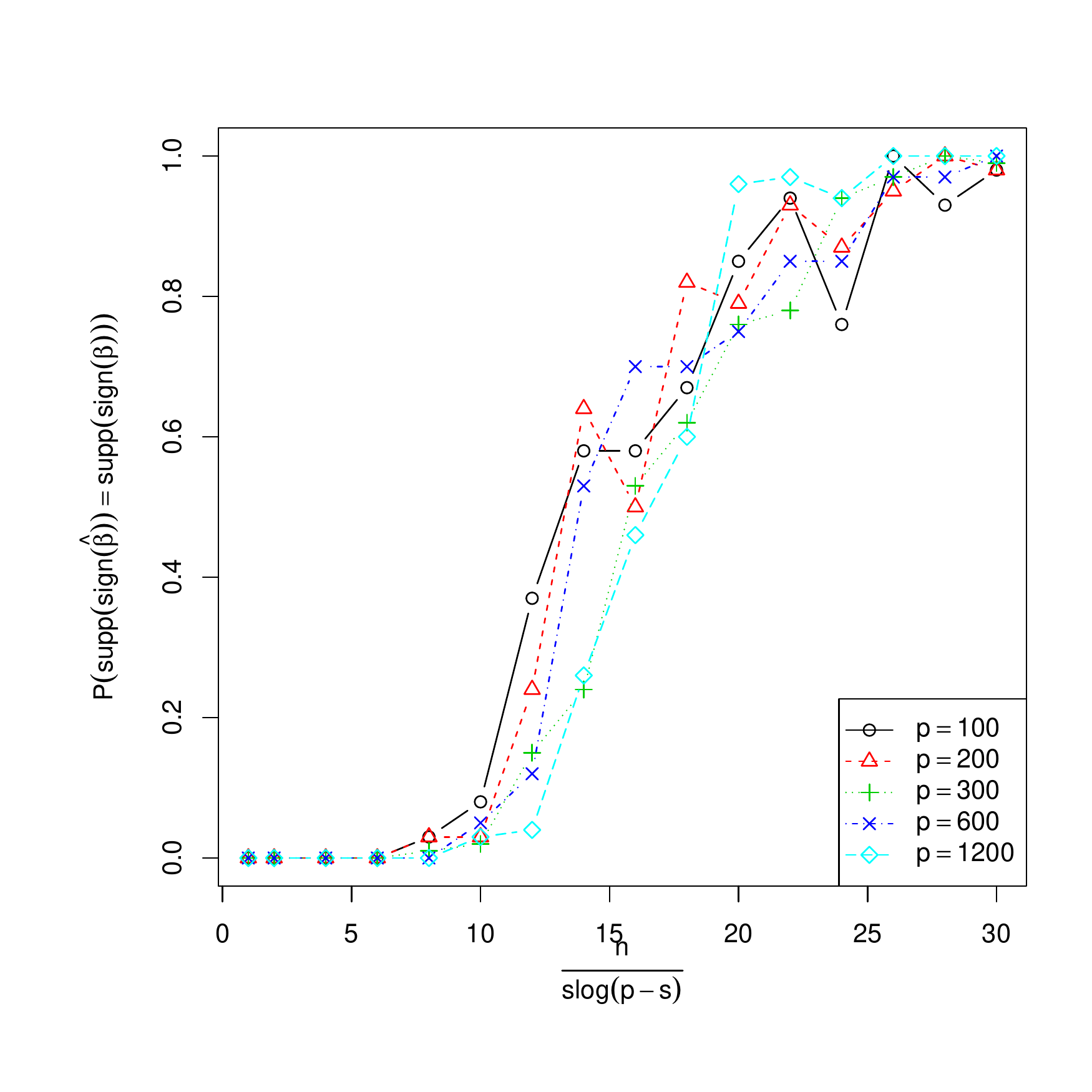}
  \caption{Model (\ref{X3model})}
  \label{fig:sfig2:random:beta}
\end{subfigure}%

\begin{subfigure}{.5\textwidth}
  \centering
  \includegraphics[width=.8\linewidth]{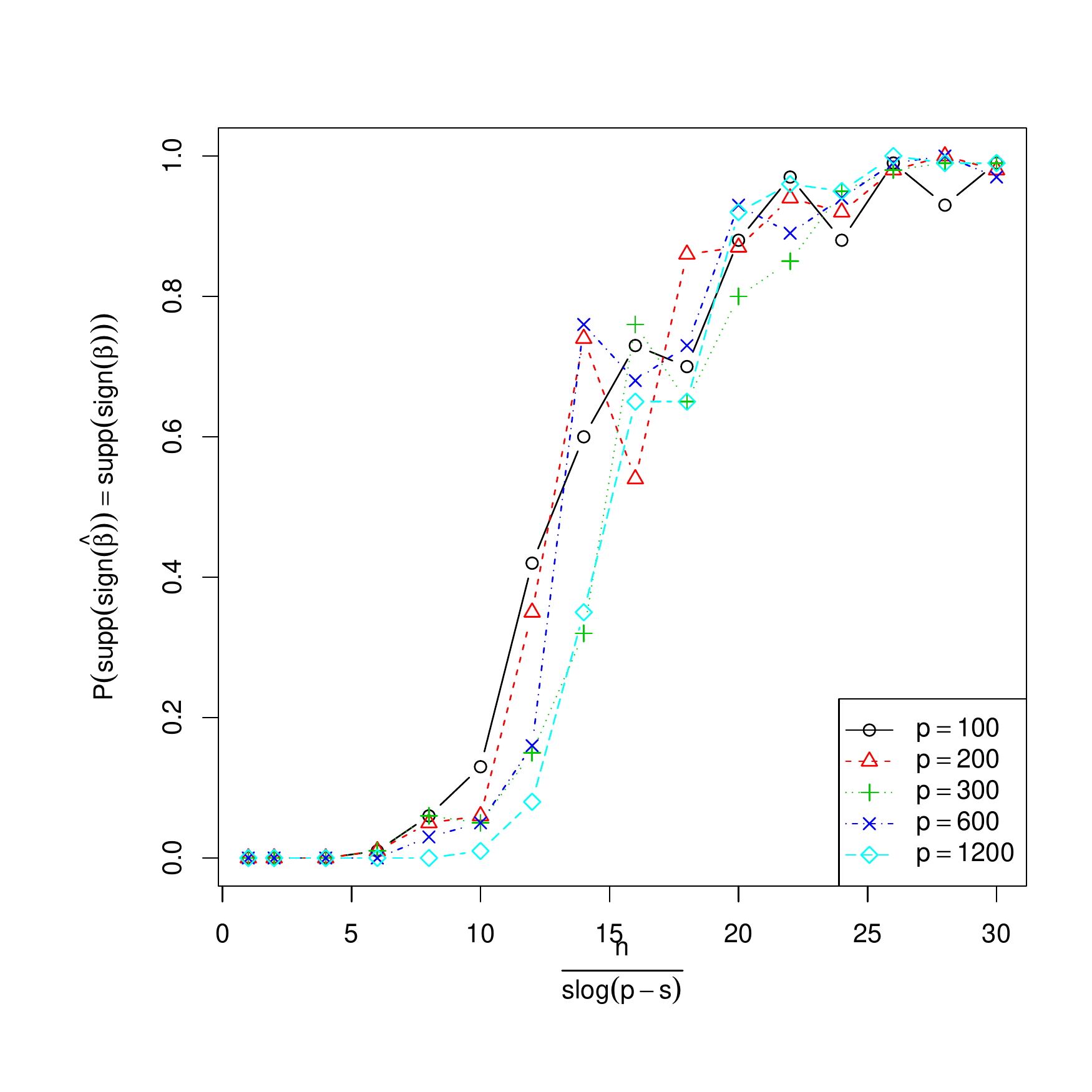}
  \caption{Model (\ref{Xe3model})}
  \label{fig:sfig3:random:beta}
\end{subfigure}%
\begin{subfigure}{.5\textwidth}
  \centering
  \includegraphics[width=.8\linewidth]{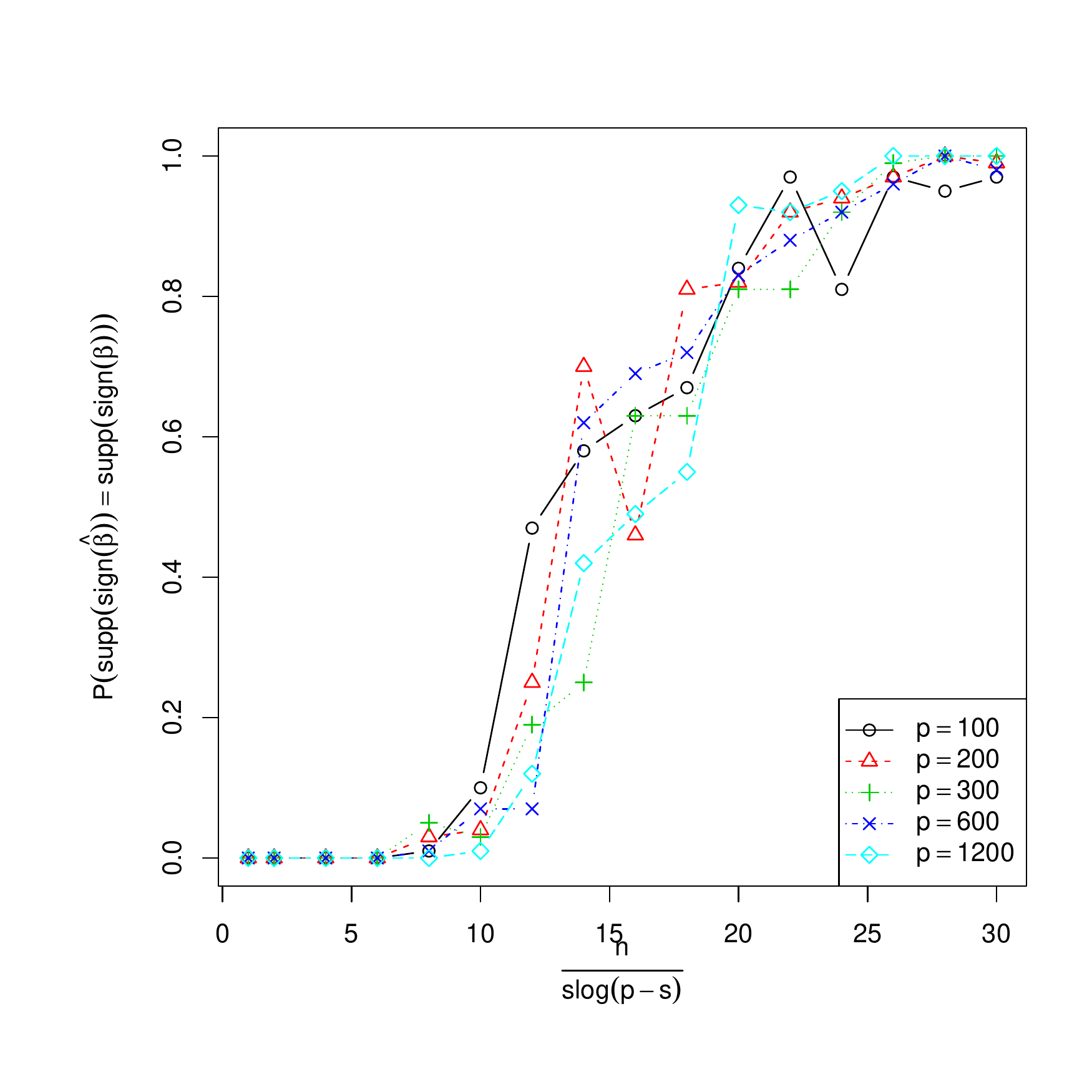}
  \caption{Model (\ref{regmodel})}
  \label{fig:sfig4:random:beta}
\end{subfigure}%
\end{figure}

\begin{figure}[H]\caption{Efficiency Curves for SDP, $s = \sqrt{p},~ \Gamma = \frac{n}{s\log(p-s)} \in [0,40]$, $\bbeta = \widetilde \bbeta/\|\widetilde \bbeta\|_2$ according to (\ref{random:beta:specification})}  
\label{sqrt:SDP:random:beta}
\begin{subfigure}{.5\textwidth}
  \centering
  \includegraphics[width=.8\linewidth]{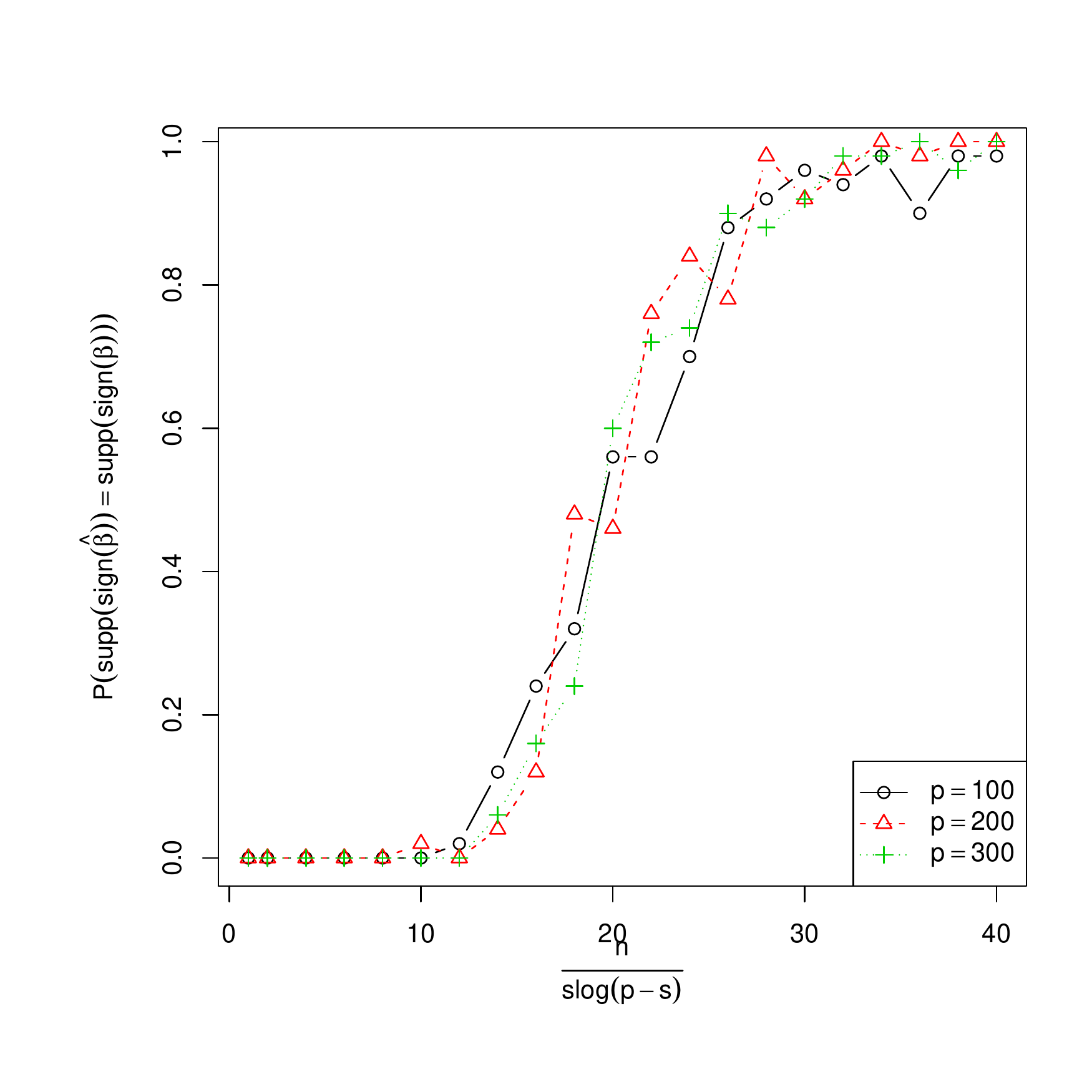}
  \caption{Model (\ref{sinmodel})}
  \label{fig:sfig1:random:beta}
\end{subfigure}%
\begin{subfigure}{.5\textwidth}
  \centering
  \includegraphics[width=.8\linewidth]{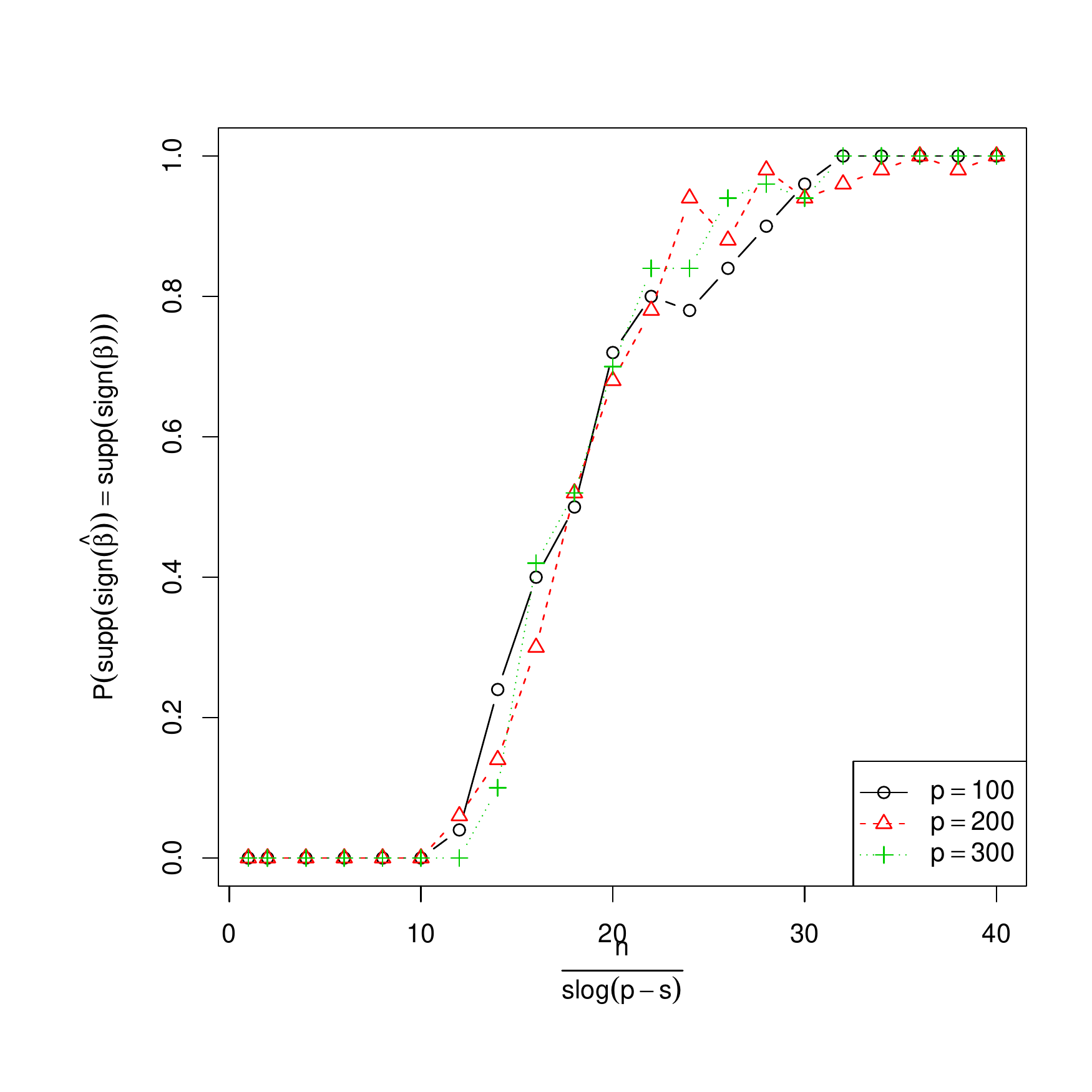}
  \caption{Model (\ref{X3model})}
  \label{fig:sfig2:random:beta}
\end{subfigure}%

\begin{subfigure}{.5\textwidth}
  \centering
  \includegraphics[width=.8\linewidth]{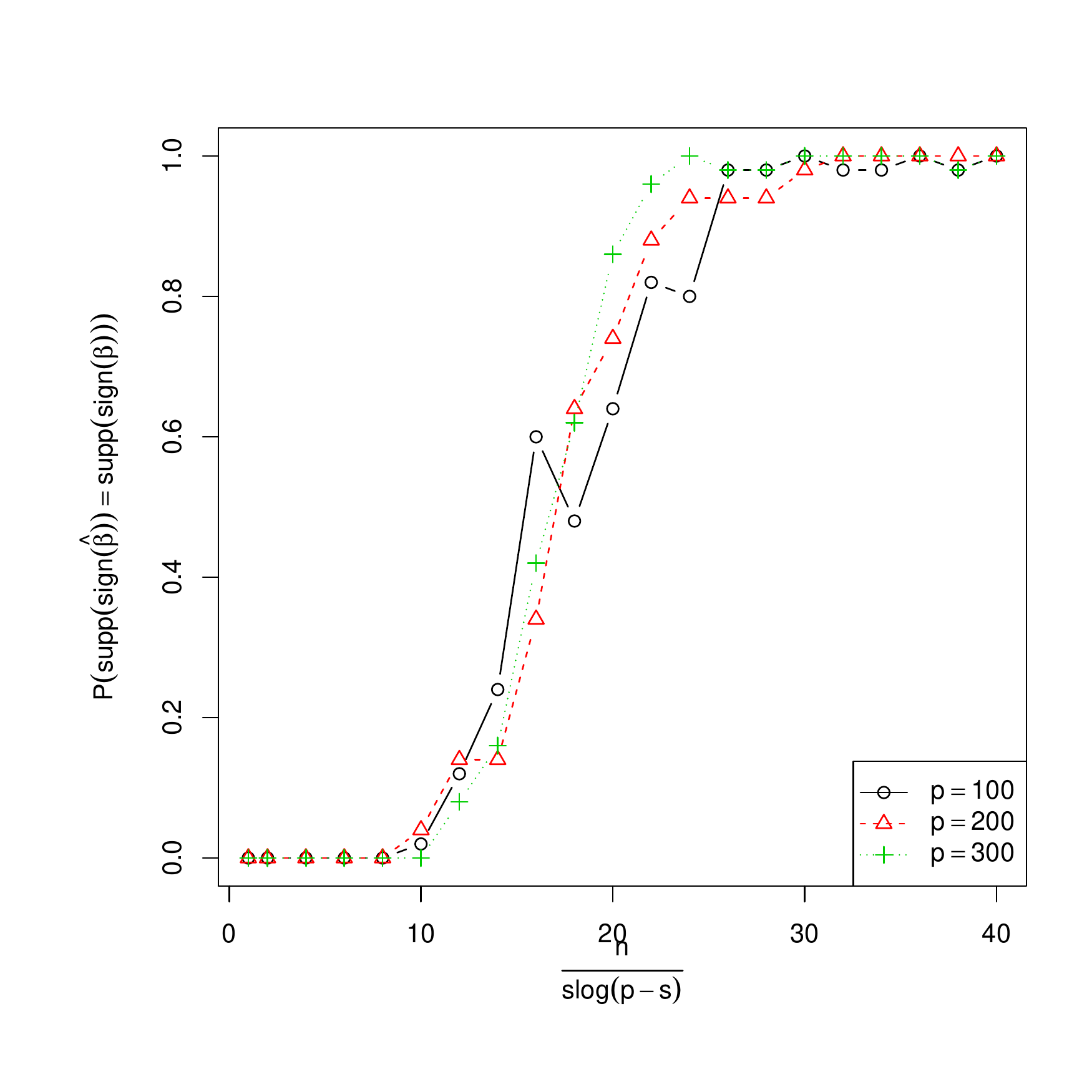}
  \caption{Model (\ref{Xe3model})}
  \label{fig:sfig3:random:beta}
\end{subfigure}%
\begin{subfigure}{.5\textwidth}
  \centering
  \includegraphics[width=.8\linewidth]{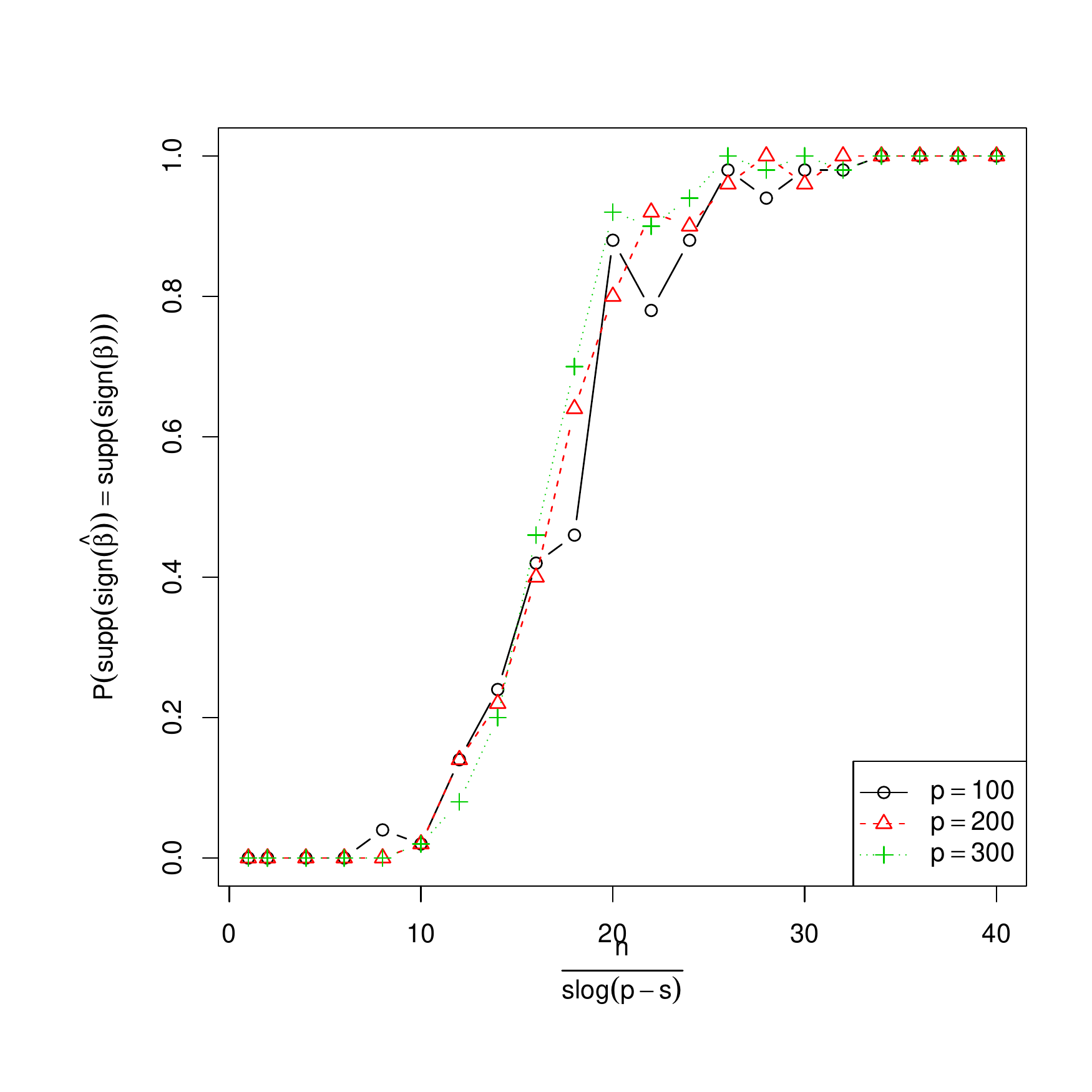}
  \caption{Model (\ref{regmodel})}
  \label{fig:sfig4:random:beta}
\end{subfigure}%
\end{figure}

\end{document}